\DeclareSymbolFont{calletters}{OMS}{cmsy}{m}{n}
\DeclareSymbolFontAlphabet{\mathcal}{calletters}
\newtheorem{Theorem}{Theorem}[section]
\newtheorem{Definition}[Theorem]{Definition}
\newtheorem{Proposition}[Theorem]{Proposition}
\newtheorem{Assumption}[Theorem]{Assumption}
\newtheorem{Lemma}[Theorem]{Lemma}
\newtheorem{Remark}[Theorem]{Remark}
\def \E{\mathbb{E}}
\def \F{\mathbb{F}}
\def \P{\mathbb{P}}
\def \R{\mathbb{R}}
\def \N{\mathbb{N}}
\def \K{\mathbb{K}}
\def\Ac{{\cal A}}
\def\Cc{{\cal C}}
\def\Dc{{\cal D}}
\def\Ec{{\cal E}}
\def\Fc{{\cal F}}
\def\Hc{{\cal H}}
\def\Kc{{\cal K}}
\def\Lc{{\cal L}}
\def\Mc{{\cal M}}
\def\Nc{{\cal N}}
\def\Pc{{\cal P}}
\def\Qc{{\cal Q}}
\def\Rc{{\cal R}}
\def\Tc{{\cal T}}
\def\Uc{{\cal U}}
\def\Vc{{\cal V}}
\def\Pb{\bar{\P}}
\def\Eb{\overline{\E}}
\def \Om{\Omega}
\def \om{\omega}
\def \Omb{\bar{\Omega}}
\def \omb{\bar{\om}}
\def \eps{\varepsilon}
\def \0{\mathbf{0}}
\newcommand{\we}{\wedge}
\newcommand{\ba}{\begin{array}}
\newcommand{\ea}{\end{array}}
\newcommand{\bea}{\begin{eqnarray}}
\newcommand{\eea}{\end{eqnarray}}
\newcommand{\beaa}{\begin{eqnarray*}}
\newcommand{\eeaa}{\end{eqnarray*}}
\def\be{\begin{eqnarray}}
\def\ee{\end{eqnarray}}
\def\be*{\begin{eqnarray*}}
\def\ee*{\end{eqnarray*}}
\def\g{\gamma}
\def\d{\delta}
\def\l{\lambda}
\def\t{\tau}
\def\cC{{\cal C}}
\def\cD{{\cal D}}
\def\cF{{\cal F}}
\def\cH{{\cal H}}
\def\cL{{\cal L}}
\def\cN{{\cal N}}
\def\cT{{\cal T}}
\def\cV{{\cal V}}
\def\no{\noindent}
\def\q{\quad}
\def\cd{\cdot}
\def\qed{ \hfill \vrule width.25cm height.25cm depth0cm\smallskip}
\newcommand{\basa}{\begin{assumption}}
\newcommand{\easa}{\end{assumption}}
\newcommand{\bas}{\begin{assum}}
\newcommand{\eas}{\end{assum}}
 \def\cd{\cdot}
\def\1{{\bf 1}}
\def\:{\!:\!}
\newtheorem{thm}{Theorem}[section]
\newtheorem{assum}[thm]{Assumption}
\newcommand{\rmi}{{\rm (i)$\>\>$}}
\newcommand{\rmii}{{\rm (ii)$\>\>$}}
\newcommand{\rmiii}{{\rm (iii)$\>\>$}}
\newcommand{\rmiv}{{\rm (iv)$\>\>$}}
\def\x{\times}
\def\1{{\bf 1}}
\def\no{\noindent}
\def\Xb{\overline{X}}
\def\eg{\textit{e.g.}}
\def\ie{\textit{i.e.}}
\newcommand{\subf}[2]{%
  {\small\begin{tabular}[t]{@{}c@{}}
  #1\\#2
  \end{tabular}}%
}
\title{Mean Field Games with Branching
}
\author{
	Julien Claisse \thanks{Universit\'e Paris-Dauphine, PSL University, CNRS, CEREMADE, Paris. claisse@ceremade.dauphine.fr}
	\and Zhenjie Ren\thanks{Universit\'e Paris-Dauphine, PSL University, CNRS, CEREMADE, Paris. ren@ceremade.dauphine.fr}
        \and Xiaolu Tan\thanks{Department of Mathematics, The Chinese University of Hong Kong. xiaolu.tan@cuhk.edu.hk}
	}
\date{\today}
\begin{document}

\maketitle

\abstract{
	Mean field games are concerned with the limit of large-population stochastic differential games where the agents interact through their empirical distribution. In the classical setting, the number of players is large but fixed throughout the game. However, in various applications, such as population dynamics or economic growth, the number of players can vary across time which may lead to different Nash equilibria. For this reason, we introduce a branching mechanism in the population of agents and obtain 
a variation on the mean field game problem.
	As a first step, we study a simple model using a PDE approach to illustrate the main differences with the classical setting. We prove existence of a solution and show that it provides an approximate Nash-equilibrium for large population games. We also present a numerical example for a linear--quadratic model.  
	Then we study the problem in a general setting by a probabilistic approach. It is based upon the relaxed formulation of stochastic control problems which allows us to obtain a general existence result.

\vspace{1mm}

\noindent {\bf Key words.} Mean field games, branching diffusion process, relaxed control.

\vspace{1mm}

\noindent {\bf MSC (2010)} 60J80, 91A13, 93E20.
}

\section{Introduction}

	The theory of Mean Field Game (MFG) consists in studying
	the limit behaviour of the equilibrium to a stochastic differential game involving a large number of indistinguishable agents,
	who have individually a negligible influence on the overall system and whose decisions are influenced by the empirical distribution of the other agents.
	It was first introduced independently by Lasry and Lions~\cite{lasry2007}, and by Huang \textit{et al.} \cite{huang2006}, in terms of a coupled backward Hamilton--Jacobi--Bellman (HJB) equation and forward Fokker--Planck equation.
	Since then, it has been attracting increasing interest from both the mathematical and the engineering communities.
	Let us refer to the lecture notes of Cardaliaguet~\cite{cardaliaguet2010}  as well as P.-L. Lions' courses on the site of Coll\`ege de France ({\it http://www.college-defrance.fr/site/en-pierre-louis-lions/})
	 for a pedagogical introduction and a detailed overview on this subject.
	
	\vspace{0.5em}
	
	More recently, probabilistic approaches have been developed to study MFG,
	starting with the paper by Carmona and Delarue~\cite{carmona2013probabilistic},
	and have generated a stream of interesting and original results.
	Among them, we would like to mention the weak formulation of MFG introduced by Carmona and Lacker~\cite{CarLac2015} as well as the relaxed formulation introduced by Lacker~\cite{lacker2015}, which yield very general existence results.
	The latter formulation has also lead to a deeper understanding  of the connection between MFG and the corresponding finite player game, by establishing the convergence of $\eps_n$--equilibrium to the $n$-player game toward the MFG solution~\cite{lacker2016}.
	We refer to the book of Carmona and Delarue \cite{CD-book} for a thorough presentation of the probabilistic approach to MFG.
	
	\vspace{0.5em}

	In most of the MFG literature, the corresponding $n$-players game has a constant number of players throughout the game.
	A notable exception occurs in the context of MFG of optimal stopping, where the players choose when they leave the game in an optimal way, so that the population decreases over time.
	We refer to Nutz~\cite{nutz18}, Bertucci~\cite{Ber2017} and Bouveret \emph{et al.}~\cite{bouveret2018mean}	for variations on this problem. Another exception appears in~Campi  and Fischer~\cite{CamFish2018},  where the players quit the game when they reach the boundary of some domain. 	
	Except these interesting examples, a general discussion on MFG allowing the number of players to vary across time is still missing in the literature. 
	This constitutes the first and main objective of this paper.
	Besides its theoretical interest, we believe that this feature can be crucial for various applications in areas such as biology and economy.
	For instance, we might take into account the influence of demography in models of economic growth based on MFG~\cite{gueant11}.
		
	\vspace{0.5em}
	
	Mathematical modelling of population dynamic has been an important topic of research over the last century.
	In particular, the theory of branching processes have been developed  in order to study the evolution of population with random influences, leading to numerous applications in biology and medicine for instance.
	Let us refer to \cite{athreya72,kimmel15} for an introduction to branching processes and their applications in biology.
	In this paper, we are specifically concerned with branching diffusion processes, where each particle has a feature, \eg, its spatial position, whose dynamic is given by a diffusion. 
  It was first introduced by Skorokhod~\cite{skorohod1964branching} and later studied, more thoroughly and systematically, in a series of papers by Ikeda \emph{et al.}~\cite{ikeda1969a}.
 In spite of its potential for applications, the optimal control of branching diffusion processes has not attracted much interest so far. 
 We can mention nonetheless the papers of Ustunel~\cite{ustunel81}, Nisio~\cite{nisio85} and Claisse~\cite{claisse2018}. See also the related work of Bensoussan \emph{et al.}~\cite{Ben2014} on differential games.

	\vspace{0.5em}

	In this paper, we use a branching process to model the evolution of the population of players in the context of MFG.
	To give a brief illustration, let us consider the corresponding finite player game  starting with $n$ initial players. They give rise to a branching diffusion process where each particle corresponds to a player.
	Denote by $K^n_t$ the collection of all players remaining in the game at time $t.$
	The position of player $k\in K^n_t$ follows the controlled dynamic
	\bea \label{eq:diffusion_intro}
		d X^k_t 
		= 
		b\big(t, X^k_t, \mu^n_t, \alpha^k_t\big) \,dt 
		+
		\sigma\big(t, X^k_t, \mu^n_t, \alpha^k_t\big) \,dB^k_t
	\eea
     where $\alpha^k$ corresponds to the strategy of player $k$, $(B^k)_k$ are independent Brownian motions and $\mu^n_t$ is the renormalized empirical occupation measure of the players remaining in the game at time $t$ given as 
     \begin{equation*}
     \mu^n_t := \frac{1}{n} \sum_{k \in K^n_t} \delta_{X^k_t}.
     \end{equation*}
     We consider further that player $k$ leaves the game at a random time $T_k,$ exponentially distributed with intensity $\gamma(t,X_t^k,\mu_t^n,\alpha^k_t),$ and is replaced by $\ell\in\N$ substitute players with probability $p_\ell(T_k,X^k_{T_k},\mu_{T_k}^n).$ 
   In addition, each player $k\in K^n_t$ aims at minimizing his own cost function given as 
	\be*
		 \inf_{\alpha^k} ~ \E  \Big[ \int_t^{T_k\wedge T} f\big(s, X^k_s,  \mu^n_s, \alpha^k_s\big) \, ds  +  g\big( X^k_T, \mu^n_T\big) \mathbf{1}_{T_k\geq T} \Big].
	\ee*
	
	\vspace{0.5em}
	
	As in classical MFG, we heuristically send the number of initial players $n \to \infty,$ so  that 
	the problem can be described by a branching diffusion process starting from one representative player.
	In particular, the limit of the empirical measure is given by
	\be*
		 \mu^n_t = \frac{1}{n} \sum_{k \in K^n_t} \delta_{X^k_t}
		 \xrightarrow[n\to\infty]{}
		 \E \bigg[ \sum_{k \in K^1_t} \delta_{X^k_t} \bigg] =: m_t.
	\ee*
	We highlight that, because of the branching mechanism, the limit measure $m_t$ is not necessarily a probability measure, but a finite positive measure. Finally, the MFG problem in this setting can be defined as follows:
\begin{itemize}
 \item[1.] Fix an environment measure $(\mu_s)_{s\in [0,T]}$ where $\mu_s$ is a finite positive measure on $\R^d$.

 \item[2.] Find a branching diffusion process $(\hat{X}^k_s)_{k\in \hat{K}^1_s}$ where every agent $k$ solves the following stochastic control problem:
	\begin{equation*}
	 \inf_{\alpha^k}~
	  \E \Big[ \int_t^{T_k\wedge T} f(s, X^{k}_s,  \mu_s, \alpha^k_s) \, ds 
		+ g \big( X^{k}_T, \mu_T \big) \1_{T_k \geq T}
	 \Big].
	\end{equation*}
		
 \item[3.] The problem is then to find an equilibrium,
	\ie, an environment measure $(\mu_s)_{s\in[0,T]}$ and an induced optimal branching diffusion process 
	such that 
    \begin{equation*}
     \mu_s = \E \bigg[ \sum_{k \in \hat{K}^1_s} \delta_{\hat{X}^k_s} \bigg] =: \hat{m}_s.
    \end{equation*}	
\end{itemize}

	\vspace{0.5em}

	As a first step, we consider a simple model and we follow the PDE approach described in Cardaliaguet~\cite{cardaliaguet2010} in order to provide a first insight into MFG with branching. 
	We derive the MFG equations as a system of coupled forward--backward PDEs, where the solution to the Fokker--Planck equation takes values in the space of finite measures instead of probability measures. 
	By adapting the arguments in~\cite{cardaliaguet2010}, we are able to ensure existence of a solution and show that it provides an approximate Nash--equilibrium for games involving a large number of players. This result justifies to a certain extent the MFG formulation.
	Then, in the spirit of Bardi~\cite{Bar2012} and Carmona \emph{et al.}~\cite{lachapelle2013} for classical MFG, we focus on a linear--quadratic example where the solution can be computed explicitely by solving a system of ordinary differential equations. This allows us to illustrate numerically the behavior of the MFG equilibrium and the influence of the branching mechanism.

	\vspace{0.5em}
	
	Next we adopt a probabilistic approach and consider a weak formulation of the MFG with branching in a general setting. 
	 We follow the inspiration of  Lacker~\cite{lacker2015} for classical MFG and use the relaxed formulation of stochastic control problem introduced by El Karoui~\textit{et al.}~\cite{elkaroui1987}. 
	More precisely, we introduce a controlled martingale problem on an appropriate canonical space which corresponds to a weak notion of controlled branching diffusion processes. 
	To the best of our knowledge, this relaxed formulation appears for the first time in the literature.
  Then we show existence of a solution under rather general conditions by applying Kakutani's fixed point theorem. 
	In order to provide a transparent presentation of the main techniques for handling the branching mechanism, we restrict to the case of bounded, Markovian coefficient and finite horizon problem, although our results can easily be extended to the case of non-Markovian coefficients with appropriate growth conditions and rather general objective functions.
	
	\vspace{0.5em}
	
	Although sharing some similarities with the arguments in~\cite{lacker2015}, we would like to emphasize that our approach is not a simple extension by replacing the canonical space and the generator of diffusions by those of branching diffusions. 
	Indeed, in our setting, a branching diffusion process is optimal when every particle minimizes its own individual cost function. 
	As their intensity of default depend on the time, position and control variables,
	these optimal particles does not aggregate to form a branching diffusion process which minimizes a global cost function on the whole population. 
	In other words, our problem is not equivalent to a MFG where each agent controls a branching diffusion process $(X^k_t)_{k\in K_t}$  in order to minimize a cost function of the form 
\begin{equation*}
 \E\left[\bar{g}\left(\left(X^k_{T\wedge \cdot}\right)_{k\in K_{T\wedge\cdot}},\mu^n_{T\wedge\cdot}\right)\right].
\end{equation*}	
	For this reason, the  completion of Step~2 above, \ie, the construction of an optimal branching diffusion, becomes rather subtle and technical in the proof of the main existence result.
	This is also the reason why it is not straightforward to extend the arguments in \cite{CarLac2015} to prove uniqueness, or the arguments in~\cite{lacker2016} to derive a general limit theory in our setting.
		These interesting and delicate questions are left for future research.

	\vspace{0.5em}

	The rest of the paper is organized as follows.
	In Section~\ref{sec:formulation}, we provide a detailed mathematical construction of controlled branching diffusion processes and introduce a precise (strong) formulation of the MFG with branching. 
	In Section~\ref{sec:pde}, we apply a PDE approach in a simple framework in order to provide a first insight into this problem. 
	Finally, in Section \ref{sec:probabilistic}, we consider a general setting and we follow a probabilistic approach to formulate and to study a relaxed version of the MFG with branching. 
	These last two sections are independent and we provide a detailed description of their content at the beginning of each of them.
	
%

\paragraph{Notations.}
$\mathrm{(i)}$ Let $X$ be a Polish space, we denote by $\Mc(X)$ (resp. $\Pc(X)$) the Polish space of all finite non-negative measures (resp. probability measures) on $X$, equipped with the weak topology. Denote also by $\Mc_1(X)$ the subspace of measures with finite first moment, where the weak topology can be metrized by the Wasserstein type metric $W_1$ introduced in Appendix~\ref{sec:WasDis_App}.

\noindent $\mathrm{(ii)}$ To describe the genealogy in a branching process, we follow the Ulam--Harris--Neveu notation and give to each particle a label in the following set:
  \begin{equation*}
    \K := \bigcup_{n=1}^{\infty} \N^n.
  \end{equation*}
  Given two labels $k = k_1 \ldots k_n$ and $\tilde{k} = \tilde{k}_1 \ldots \tilde{k}_m$ in $\K$, we define their concatenation as the label $k\tilde{k}:=k_1 \ldots k_n \tilde{k}_1 \ldots \tilde{k}_m$. 
  We write $k\prec \tilde{k}$ (resp. $k\preceq \tilde{k}$) when there exists $k'\in\K$ such that $\tilde{k}=k k'$ (resp. $k\prec \tilde{k}$ or $k=\tilde{k}$).
  
  \noindent $\mathrm{(iii)}$ Let $E$ be the state space of branching diffusion processes defined as
  \begin{equation*}
    E:=\left\{\sum_{k\in K} {\delta_{(k,x^k)}};\ K\subset\K\text{ finite},\, x^k\in\R^d,\, k\nprec \tilde{k}\ \text{for all }k, \tilde{k}\in K \right\}.
  \end{equation*}
  It is a Polish space under the weak topology as a closed subset of $\Mc(\K\x\R^d).$ 

\noindent $\mathrm{(iv)}$ Denote by $\Cc^{2}_b(\K\x\R^d)$ the collection of sequences $\bar{\varphi}=(\varphi^k)_{k\in\K}, \varphi^k\in\Cc^2(\R^d),$ such that $\varphi^k$ and its partial derivatives are bounded uniformly w.r.t. $k\in\K.$

 \noindent $\mathrm{(v)}$ Let $A$ be the control space and denote by $\bar{A}:=A^\K$ the collection of $\bar{a}=(a^k)_{k\in\K}, a^k\in A$. We assume that $A$ is  a nonempty compact metric space throughout the paper.

\section{Formulation of the Problem}
\label{sec:formulation}

\subsection{Controlled Branching Diffusions}
\label{sec:branching-diffusion}

 	A branching diffusion process describes the evolution of a population of independent and identical particles moving according to a diffusion process. 
	In our setting, the dynamic of the particles depends further on a control and an environment measure.  
	More precisely, we consider a population starting with one particle at an initial position with distribution $m_0\in\Pc(\R^d).$ Then the particle moves according to a controlled diffusion with drift $b:[0,T]\x\R^d\x\Mc(\R^d)\x A\to\R^d$ and diffusion coefficient $\sigma:[0,T]\x\R^d\x\Mc(\R^d)\x A\to \R^{d\x d}$.
	Furthermore, the particle dies at rate $\gamma:[0,T]\x\R^d \x\Mc(\R^d)\x A \to\R_+$ and gives birth to $\ell\in\N$ offspring with probability $p_\ell:[0,T]\x \R^d\x\Mc(\R^d)\to [0,1]$ at the position of its death. 
	After their birth, the child particles perform the same dynamic as the parent particle driven by independent Brownian motions and Poisson point processes.

In order to construct the process above, 
we consider a filtered probability space $(\Omega,\cF,(\cF_s)_{s\in [0,T]},\P)$ \
equipped with the following mutually independent sequences of random variables:
\begin{itemize}
\item $(B^{k})_{k\in\K}$ independent $d$-dimensional Brownian motions;
\item $(Q^{k}(ds,dz))_{k\in\K}$ independent Poisson random measures on $[0,T]\x\R_+$ with intensity measure $ds\,dz$.
\end{itemize}
Let $\Ac$ be the collection of $A$-valued predictable processes and denote by $\bar{\Ac}:=\Ac^\K$ the collection of $\bar{\alpha}=(\alpha^k)_{k\in\K}, \alpha^k\in\Ac.$ 
An element $\bar{\alpha}\in\bar{\Ac}$ is a control for the branching diffusion process in such a way that each $\alpha^k\in\Ac$ corresponds to the strategy of particle $k.$

The controlled branching diffusion process is then constructed as follows: Given a control $\bar{\alpha}=(\alpha^k)_{k\in\K}\in\bar{\Ac}$, an environment measure $(\mu_s)_{s\in [0,T]}$, $\mu_s\in\Mc(\R^d)$, and a collection $(X_0^1,X_0^2,\ldots,X_0^n)$ of identical and independent $\Fc_0$--random variables with distribution $m_0\in\Pc(\R^d)$,

\begin{enumerate}
\item Start from $n$ particles at position $X_0^1,X_0^2,\ldots,X_0^n$ and index them by $1,2,\ldots,n$ respectively. These are the particles of generation $1$. We denote by $S_{i}:=0$ the time of birth of particle $i$ for each $i=1,\ldots,n$.

\item For generation $m\geq 1,$ 
provided that the particle $k\in\N^m$ was born at time $S_k<T$,
its dynamic is given as follows:
\begin{itemize}
\item The position $X^k$ of the particle during its lifetime is given by
\begin{equation}\label{eq:eds-k}
X^k_s = X^{k}_{S_k} + \int^s_{S_k} b\big(r,X^k_r,\mu_r,\alpha^k_r\big) \,dr + \int^s_{S_k} \sigma\big(r,X^k_r,\mu_r,\alpha^k_r\big) \,dB^k_r, \quad \P-\text{a.s.}
\end{equation}

\item The time of death/default of the particle is given as 
\begin{equation} \label{eq:def_Tk}
	T_k:=\inf \big\{s > S_k ;\, Q^k\big(\{s\}\x\big[0,\gamma\big(s,X_s^k, \mu_s, \alpha^k_s\big)\big]\big)=1 \big\}\wedge T.
\end{equation}

\item If $T_k<T$, the particle dies and gives rise to $\ell\in\N$ particles provided that 
\begin{equation}\label{eq:def_Uk}
	\sum_{i=0}^{\ell-1} {p_i\big(T_k,X^k_{T_k},\mu_{T_k}\big)} \leq \frac{U_k}{\gamma\big(T_k,X^k_{T_k}, \mu_{T_k}, \alpha^k_{T_k} \big)} < \sum_{i=0}^{\ell} {p_i\big(T_k,X^k_{T_k},\mu_{T_k}\big)},
\end{equation}
where $U_k$ is a positive random variable such that $(T_k,U_k)$ belongs to the support of $Q^k,$ see Remark~\ref{rem:PoissonMeasue}.
These particles belong to the $(m+1)$th generation. 
We index them by label $ki$ and we set $S_{ki}:=T_k$ and $X^{ki}_{S_{ki}}:=X^{k}_{T_k}$ for their time and position of birth for $i = 1,\ldots, \ell.$
\end{itemize}
\end{enumerate}

\begin{Remark} \label{rem:PoissonMeasue}
\rm
    Recall that a Poisson measure $Q(ds,dz)$ on $[0,T]\x[0,C]$ with intensity $ds\,dz$ admits the following representation:
    \begin{equation*}
     Q(ds,dz) = \sum_{i\in\N} \delta_{(\eta_i, \zeta_i)}(ds,dz) \mathbf{1}_{\eta_i\leq T},
    \end{equation*}
    where $(\eta_i-\eta_{i-1}, \zeta_i)_{i\in\N}$ are i.i.d. pairs of random variables with distribution $\Ec(C) \otimes \Uc[0,C]$\footnote{Here $\Ec(C)$ denotes the exponential distribution with parameter $C$, and $\Uc[0,C]$ denotes the uniform distribution on $[0,C]$.}.
    In particular, in view of~\eqref{eq:def_Tk}--\eqref{eq:def_Uk} and Assumption~\ref{hyp:pop} below, the particle $k$ gives rise to $\ell\in\N$ offspring with conditional probability $p_{\ell}(T_k, X^k_{T_k},\mu_{T_k})$ given $\cF_{T_k-}$.
\end{Remark}

  We represent the controlled branching diffusion above as a measure-valued process:
  \begin{equation*}
    Z_s = \sum_{k\in K^n_s} {\delta_{(k,X^k_s)}},
  \end{equation*}
  where $K^n_s$ contains the labels of particles alive at time $s$. 
  Under the following assumptions, the proposition below ensures  that the population process is well-defined.
  
  \begin{Assumption}\label{hyp:pop}   
	\rmi $b$ and $\sigma$ are bounded and there exists $C>0$ such that for all $s\in [0,T]$, $x,y\in\R^d$, $\mu\in\Mc(\R^d)$, $a\in A$,
      \begin{equation*}
	\big|b\left(s,x,\mu,a\right)-b\left(s,y,\mu,a\right)\big| + \big|\sigma\left(s,x,\mu,a\right)-\sigma\left(s,y,\mu,a\right)\big| \leq C \left|x-y\right|;
      \end{equation*}
    \rmii $\gamma$ and  $\sum_{\ell\in\N} {\ell p_{\ell}}$ are bounded.
  \end{Assumption}

  \begin{Proposition}\label{prop:def}
    Under Assumption~\ref{hyp:pop}, there exists a unique (up to indistinguishability) c\`adl\`ag and adapted process $(Z_s)_{s\in [0,T]}$ valued in $E$. In addition, it holds 
    \begin{equation}\label{eq:moment}
      \E\Big[\sup_{s\in[0,T]}{\left\{N^n_s\right\}}\Big]\leq n \exp{\bigg( \Big\|\gamma\sum_{\ell\in\N}{\ell p_{\ell+1}}\Big\|\, T\bigg)},
    \end{equation}
    where $N^n_s:=\# K^n_s$ is the number of particles alive at time $s$.
  \end{Proposition}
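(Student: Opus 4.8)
The plan is to construct the process $(Z_s)$ generation by generation, using the explicit recursive recipe from the construction above, and to verify well-posedness together with the moment bound~\eqref{eq:moment} by a branching/coupling argument. First I would treat the motion of a single particle: under Assumption~\ref{hyp:pop}(i) the coefficients $b,\sigma$ are bounded and Lipschitz in the space variable uniformly in $(s,\mu,a)$, so for each fixed control $\alpha^k\in\Ac$ and environment $(\mu_s)$ the SDE~\eqref{eq:eds-k} has a unique strong solution on $[S_k,T]$ by the classical Lipschitz SDE theory; the death time $T_k$ in~\eqref{eq:def_Tk} is then a well-defined stopping time, being the first atom of the Poisson measure $Q^k$ below the adapted intensity graph $\gamma(s,X^k_s,\mu_s,\alpha^k_s)$, which is bounded by Assumption~\ref{hyp:pop}(ii). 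Thus each particle has a well-defined \cad\ trajectory on $[S_k,T_k\we T]$, and the offspring rule~\eqref{eq:def_Uk} together with Remark~\ref{rem:PoissonMeasue} assigns a well-defined number $\ell$ of children with the stated conditional law given $\cF_{T_k-}$, since $(p_\ell)$ sums to one.

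Next I would build the whole process by induction on the generation index $m\geq 1$. Given that all particles of generations $1,\dots,m$ are constructed as adapted \cad\ processes, each such particle $k\in\N^m$ produces, at its death time $S_{ki}:=T_k<T$, finitely many children indexed by $ki$, each started afresh at $(T_k,X^k_{T_k})$ and driven by the independent data $(B^{ki},Q^{ki})$; applying the single-particle step to each child yields generation $m+1$. Because the driving sequences $(B^k)_{k\in\K}$ and $(Q^k)_{k\in\K}$ are mutually independent, the sub-populations emanating from distinct particles evolve independently conditionally on their birth data, which is exactly the branching property needed below. Summing the particle positions alive at time $s$ gives the measure-valued process $Z_s=\sum_{k\in K^n_s}\delta_{(k,X^k_s)}$, which is $E$-valued by construction since the labels alive at any time are finite and pairwise non-comparable for the $\prec$ order. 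The \cad\ and adapted properties pass to $Z$ from those of the individual trajectories, and uniqueness up to indistinguishability follows from the pathwise uniqueness of each SDE~\eqref{eq:eds-k} and the deterministic branching rule, propagated along the genealogical tree.

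For the moment bound~\eqref{eq:moment}, I would control the total number of particles $N^n_s=\#K^n_s$ by comparison with a pure branching (Yule-type) process in which spatial positions and controls are forgotten. The key observation is that the net creation rate of particles is bounded: a particle dying at rate $\gamma$ and producing $\ell$ children replaces itself by a net $\ell-1$ individuals, so the process $N^n$ is dominated by a continuous-time Markov branching process whose individual net-birth rate is at most $\big\|\gamma\sum_{\ell\in\N}\ell p_{\ell+1}\big\|$. Writing a balance relation for $\E[N^n_s]$, one gets
\begin{equation*}
\E\big[N^n_s\big] \leq n + \Big\|\gamma\sum_{\ell\in\N}\ell p_{\ell+1}\Big\| \int_0^s \E\big[N^n_r\big]\,dr,
\end{equation*}
and Gronwall's lemma yields $\E[N^n_s]\leq n\exp(\|\gamma\sum_\ell \ell p_{\ell+1}\|\,T)$; since $s\mapsto N^n_s$ increases between deaths and the supremum is attained at a creation time, monotonicity upgrades this to the bound on $\E[\sup_s N^n_s]$ claimed in~\eqref{eq:moment}.

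I expect the main obstacle to be a genuinely rigorous justification that the inductive construction does not explode in finite time, i.e.\ that only finitely many branching events occur on $[0,T]$ almost surely so that $Z_s\in E$ for all $s$ without extra assumptions beyond the boundedness in Assumption~\ref{hyp:pop}(ii). This non-explosion is precisely what the moment estimate~\eqref{eq:moment} guarantees, so the delicate point is the logical ordering: one must establish finiteness of $N^n_s$ (at least in expectation, hence a.s.) \emph{before} claiming the full process is $E$-valued and \cad. I would handle this by first constructing the process up to the $m$-th generation for each finite $m$, deriving the moment bound at each truncation level with a constant uniform in $m$ via the Gronwall argument above, and then passing to the limit $m\to\infty$ to conclude that the total population stays finite on $[0,T]$ almost surely, which closes the construction.
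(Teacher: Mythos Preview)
Your approach matches the paper's: the paper does not give a self-contained proof but refers to Claisse~\cite{claisse2018} and summarizes the argument as (i) the Lipschitz condition in Assumption~\ref{hyp:pop}(i) yields strong uniqueness for each particle's SDE~\eqref{eq:eds-k}, and (ii) the boundedness in Assumption~\ref{hyp:pop}(ii) rules out explosion. Your generation-by-generation construction and the Gronwall/comparison argument capture exactly these two ingredients, and you correctly identify the logical ordering issue (truncate at generation $m$, obtain a moment bound uniform in $m$, then let $m\to\infty$).

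One imprecision worth tightening: the sentence ``since $s\mapsto N^n_s$ increases between deaths and the supremum is attained at a creation time, monotonicity upgrades this to the bound on $\E[\sup_s N^n_s]$'' does not stand on its own, because $N^n$ is not monotone (it drops when a particle dies with zero offspring), and a pointwise bound on $\E[N^n_s]$ does not in general control $\E[\sup_s N^n_s]$. The clean fix is the coupling you already mention: dominate $N^n$ pathwise by the pure-birth process $Y$ obtained by suppressing deaths-without-offspring and letting each particle split at the maximal rate $\|\gamma\sum_{\ell}\ell p_{\ell+1}\|$; then $Y$ is nondecreasing, $\sup_{s\le T} N^n_s\le Y_T$, and $\E[Y_T]=n\exp(\|\gamma\sum_\ell \ell p_{\ell+1}\|\,T)$ gives~\eqref{eq:moment} directly.
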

  
	The proof of the proposition above can be found in Claisse~\cite[Proposition 2.1]{claisse2018}. It relies essentially on two arguments which follow from Assumption~\ref{hyp:pop}. First, Point~(i) ensures that there exists a unique solution to SDE~\eqref{eq:eds-k}. Second, Assertion (ii) rules out explosion, \ie, there is almost surely finitely many particles in finite time.

	To conclude this section, we derive a semimartingale decomposition for an important class of functionals.
For $\Phi\in\Cc^2(\R),$ $\bar{\varphi}=(\varphi^k)_{k\in\K}, \varphi^k\in\Cc^2(\R^d)$, we define $\Phi_{\bar{\varphi}}:E\to\R$ as for all $e=\sum_{k\in K}{\delta_{(k,x^k)}}$,
\begin{equation*}
 \Phi_{\bar{\varphi}}\left(e\right) := \Phi\big(\langle e, \bar{\varphi}\rangle\big) =  \Phi\Big(\sum_{k\in K}\varphi^k(x^k)\Big).
\end{equation*}
Given $\bar{a}=(a^k)_{k\in\K}, a_k\in A$, we denote by $\Hc^{\bar{a}}$ the operator acting on the class of functions $\Phi_{\bar{\varphi}}$ given by
\begin{multline}
\label{eq:bd-generator}
 \Hc^{\mu,\bar{a}}_s {\Phi_{\bar{\varphi}}} \left(e\right) := \frac{1}{2} \Phi_{\bar{\varphi}}''\left(e\right) \sum_{k\in K} {\big|D {\varphi^k} (x^k) \sigma(s,x^k, \mu_s, a^k) \big|^2} + \Phi_{\bar{\varphi}}'\left(e\right) \sum_{k\in K} {\Lc^{\mu,a^k}_s {\varphi^k}(x^k)} \\
 + \sum_{k\in K} { \gamma(s,x^k, \mu_s, a^k) \left(\sum_{\ell\in\N} { \Phi_{\bar{\varphi}}\Big(e - \delta_{(k,x^k)}  +\sum_{i=1}^{\ell} \delta_{(ki,x^k)} \Big) p_\ell(s,x^k,\mu_s)  - \Phi_{\bar{\varphi}}\left(e\right) } \right) },
\end{multline}
	where $\cL^{\mu,a}_s$ is the infinitesimal generator associated to a diffusion with coefficients 
	$(b,\sigma)$ given as
\begin{equation} \label{eq:Lc}
   \Lc^{\mu,a}_s {\varphi}(x) := \frac{1}{2}\mathrm{tr}\left(\sigma\sigma^*\left(s,x,\mu_s,a\right) D^2{\varphi}\left(x\right)\right)  + b\left(s,x,\mu_s,a\right) \cdot D{\varphi}\left(x\right).
\end{equation}

\begin{Proposition}\label{prop:dsm}
	Under Assumption \ref{hyp:pop},
    	let $\bar{\varphi}\in\Cc^{2}_b(\K\x\R^d)$ and let $\Phi\in\Cc^2_b(\R)$ or $\Phi=\mathrm{Id}$, then the process
  \begin{equation*}
   \Phi_{\bar{\varphi}}\left(Z_s\right)
   - \int_0^s {\Hc^{\mu,\bar{\alpha}_{r}}_r {\Phi_{\bar{\varphi}}} \left(Z_{r}\right) \,dr},\quad s\in [0,T],
  \end{equation*}
	 is a c\`adl\`ag martingale.
\end{Proposition}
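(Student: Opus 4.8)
The plan is to write $\Phi_{\bar\varphi}(Z_s)=\Phi(Y_s)$ with $Y_s:=\langle Z_s,\bar\varphi\rangle=\sum_{k\in K^n_s}\varphi^k(X^k_s)$, and to obtain the decomposition by applying It\^o's formula to $\Phi(Y_s)$, treating separately the diffusive evolution of the particles between branching events and the pure-jump evolution at branching events. By Assumption~\ref{hyp:pop}(ii) and Proposition~\ref{prop:def} there are almost surely finitely many branching times in $[0,T]$, so this decomposition is well posed: denoting by $0=\tau_0<\tau_1<\cdots$ the successive branching times, on each interval $[\tau_j,\tau_{j+1})$ the label set $K^n_s$ is frozen and each alive particle evolves according to \eqref{eq:eds-k}. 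The c\`adl\`ag regularity of the resulting process is inherited directly from that of $Z$.

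First I would treat the continuous part. On $[\tau_j,\tau_{j+1})$, It\^o's formula applied to $\varphi^k(X^k_s)$ together with \eqref{eq:eds-k} gives $d\varphi^k(X^k_s)=\Lc^{\mu,\alpha^k_s}_s\varphi^k(X^k_s)\,ds+D\varphi^k(X^k_s)\,\sigma(s,X^k_s,\mu_s,\alpha^k_s)\,dB^k_s$, with $\Lc$ as in \eqref{eq:Lc}. Summing over the frozen alive particles yields the semimartingale $Y$, whose continuous finite-variation part is $\sum_k\Lc^{\mu,\alpha^k_s}_s\varphi^k(X^k_s)$ and whose continuous martingale part has, by the mutual independence of the $B^k$, quadratic variation $\sum_k|D\varphi^k(X^k_s)\,\sigma(s,X^k_s,\mu_s,\alpha^k_s)|^2\,ds$. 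A second application of It\^o's formula to $\Phi(Y_s)$ then produces exactly the first two terms of \eqref{eq:bd-generator} as drift, plus the continuous local martingale $\int_0^s\Phi'(Y_{r})\sum_k D\varphi^k(X^k_r)\,\sigma(r,X^k_r,\mu_r,\alpha^k_r)\,dB^k_r$.

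Next I would treat the jumps. At a branching time $\tau_{j+1}=T_k$ the measure jumps from $Z_{\tau_{j+1}-}$ to $Z_{\tau_{j+1}-}-\delta_{(k,X^k_{T_k})}+\sum_{i=1}^{\ell}\delta_{(ki,X^k_{T_k})}$, so the jump of $\Phi_{\bar\varphi}(Z)$ equals the increment appearing inside the bracket of \eqref{eq:bd-generator}. Using the driving Poisson measures, I would write the sum of these jumps over $[0,s]$ as $\sum_k\int_0^s\int_{\R_+}\mathbf{1}_{\{z\le\gamma(r,X^k_{r-},\mu_r,\alpha^k_{r})\}}\,g_k(r,z)\,Q^k(dr,dz)$, where $g_k(r,z)$ is the offspring-dependent increment and the number of offspring $\ell$ is the function of $z$ fixed by the selection rule \eqref{eq:def_Uk}. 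Compensating $Q^k$ by its intensity $dr\,dz$ and integrating $z$ over $[0,\gamma]$---on which, by \eqref{eq:def_Uk}, the value $\ell$ occupies a sub-interval of length $\gamma\,p_\ell$---recovers precisely the third term of \eqref{eq:bd-generator} as drift, plus a compensated-jump local martingale.

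Combining the two parts shows that $\Phi_{\bar\varphi}(Z_s)-\int_0^s\Hc^{\mu,\bar\alpha_r}_r\Phi_{\bar\varphi}(Z_r)\,dr$ equals $\Phi_{\bar\varphi}(Z_0)$ plus a local martingale. The main obstacle is the final upgrade from local to genuine martingale, which also requires justifying that all sums over the random, possibly large collection of alive particles are integrable. I would localize along $\theta_j:=\inf\{s:N^n_s\ge j\}$, noting that $\theta_j\wedge T\uparrow T$ almost surely since $\sup_{s\le T}N^n_s<\infty$ by \eqref{eq:moment}. On each stopped interval there are fewer than $j$ alive particles, so---using that $b,\sigma,\gamma$ and $\sum_\ell\ell p_\ell$ are bounded by Assumption~\ref{hyp:pop} and that $\Phi,\Phi',\Phi''$ are bounded (or $\Phi=\mathrm{Id}$, $\Phi''\equiv0$)---each stopped stochastic integral is a true martingale. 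Finally I would let $j\to\infty$ and pass to the limit by dominated convergence, the domination being supplied by \eqref{eq:moment}, which controls $\int_0^s|\Hc^{\mu,\bar\alpha_r}_r\Phi_{\bar\varphi}(Z_r)|\,dr\le C\,T\sup_{r\le T}N^n_r\in L^1$ and, in the case $\Phi=\mathrm{Id}$, the size $|\Phi_{\bar\varphi}(Z_s)|\le\|\bar\varphi\|_\infty N^n_s$ of the process itself.
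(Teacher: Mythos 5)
Your argument is correct and is exactly the route the paper takes: the paper disposes of this proposition in one line (``follows from It\^o's formula, see Claisse~\cite[Proposition~3.2]{claisse2018}''), and your proof is a faithful working-out of that It\^o computation --- diffusive part between branching times, compensation of the Poisson jump measure using the selection rule~\eqref{eq:def_Uk} to recover the $\gamma\,p_\ell$ weights, and localization plus the moment bound~\eqref{eq:moment} to upgrade from local to true martingale. No gaps worth flagging; the only cosmetic imprecision is that at the localization time $\theta_j$ the population may exceed $j$ (offspring numbers are not bounded, only $\sum_\ell \ell p_\ell$ is), but your final domination by $\sup_{r\le T} N^n_r \in L^1$ already covers this.
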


The proposition above follows from It\^o's formula, see~\cite[Proposition 3.2]{claisse2018}. It is the starting point for the relaxed formulation which is introduced in Section~\ref{sec:probabilistic}.

\subsection{Mean Field Games with Branching}
\label{subsec:MFG_descrip}

To describe the MFG of interest, let us start from the situation with a finite number of agents, say $n\in\N$ initial agents at time $0$.
They generate a branching diffusion process as described above where each particle $k$ corresponds to an agent entering the game at time $S_k$ and leaving at time $T_k$. 
We assume further that the dynamics of the agents are coupled through their empirical distribution\footnote{Another formulation consists of writing $\mu^n_s =\frac{1}{N^n_s} \sum_{k \in K^n_s} \delta_{X^{k}_s}.$ 
However our formulation allows for a broader range of interactions without making the analysis more complex.
}
$\mu^n$ given by
\begin{equation*}
 \mu^n_s ~:=~ \frac{1}{n} \sum_{k \in K^n_s} \delta_{X^{k}_s}.
\end{equation*}
In addition, each agent $k$ applies a strategy $\alpha^{k}\in\Ac$ in order to minimize the following cost criterion:
\begin{equation} \label{eq:reward_k}
	 \E \Big[ \int_{S_k}^{T_k} f(s, X^{k}_s, \mu^n_s, \alpha^{k}_s)  \,ds + g( X^{k}_T, \mu^n_T)\mathbf{1}_{T_k=T} \Big].
\end{equation}
where $f:[0,T]\x\R^d\x\Mc(\R^d)\x A\to\R$ and $g:\R^d\x\Mc(\R^d)\to\R$ are bounded. 
Namely, the agent pays a running cost $f$ while it is in the game and a terminal cost $g$ if it stays until time $T$.

\begin{Remark}
\rm
	It would be natural to also take into account a terminal cost when an agent leaves the game before time $T$, \ie, to add to~\eqref{eq:reward_k} a term of the form
\begin{align*}
 \E\Big[\tilde{g}\big(T_k,X^k_{T_k},\mu^n_{T_k}\big)\mathbf{1}_{T_k<T}\Big]
 & = \E\Big[\int_{(S_k,T_k]\x\R_+} {\tilde{g}\big(s,X^k_s,\mu^n_s\big)\1_{z\leq \gamma(s,X^k_s, \mu^n_s, \alpha^k_s)} \,Q^k(ds,dz)}\Big] \\
 & = \E\Big[\int_{S_k}^{T_k} {\tilde{g}\big(s,X^k_s,\mu^n_s\big) \gamma\big(s,X^k_s, \mu^n_s, \alpha^k_s \big)\,ds}\Big].
\end{align*}
In particular, we see that this feature is already encompassed in our formulation by a suitable modification of the running cost $f$.
\end{Remark}

\begin{Remark}
\rm
	 Actually the agent $k$ entering the game at time $S_k$ can see the current state of the game and is rather aiming to minimize
 \begin{equation*}
	\E \Big[ \int_{S_k}^{T_k} f(s, X^{k}_s, \mu^n_s, \alpha^{k}_s)  \,ds + g( X^{k}_T, \mu^n_T)\mathbf{1}_{T_k= T} ~\Big|~ \Fc_{S_k} \Big].  
 \end{equation*}
	By a classical dynamic programming argument, it turns out that it is equivalent to the optimization problem~\eqref{eq:reward_k}, in the sense that an optimal strategy for one remains optimal  for the other.
\end{Remark}

Similar to the classical MFG approach, as the initial number of agents $n \to \infty$, the influence of a single agent on the empirical distribution becomes negligible and so each agent can consider the empirical distribution as fixed.
Additionally, by construction, the entire game is symmetric and so we expect that all the players use the same optimal strategy.
We further expect by a law of large number type argument that the sequence of empirical measures $(\mu^n)_{n\in\N}$ converges to a limit $\mu$ given as
\begin{equation*}
 \langle \mu^n_s, \varphi\rangle = \frac{1}{n} \sum_{i=1}^n {\sum_{k\in K^{n,i}_s} {\varphi(X_s^k)}} \longrightarrow  \E\bigg[\sum_{k \in K^1_s} \varphi(X^{k}_s)\bigg] =: \langle \mu_s, \varphi\rangle,
\end{equation*}
where $K^{n,i}_s:=\{k\in K^n_s;\, k\succeq i\}$ denotes the collection of descendants of agent $i$.

	Hence, the limit system can formally be described by a branching diffusion process starting from a single agent and we obtain the following new mean field game problem:

\begin{itemize}
 \item[1.] Fix $(\mu_s)_{s\in [0,T]}$, $\mu_s\in\Mc(\R^d)$, an environment measure.

 \item[2.] \label{step:hjb}
 	Find a branching diffusion process $\hat{Z}_s=\sum_{k\in \hat{K}^1_s}{\delta_{(k,\hat{X}^k_s)}}$ where every agent $k$ solves the following stochastic control problem:
	\begin{equation*}
	 \inf_{\alpha^k\in\Ac}~
	  \E \Big[ \int_{S_k}^{T_k} f(s, X^{k}_s,  \mu_s, \alpha^k_s) \, ds 
		+ g \big( X^{k}_T, \mu_T \big) \1_{T_k =T}
	 \Big].
	\end{equation*}
		
 \item[3.] \label{step:fk}
 	The problem is then to find an equilibrium,
	\ie, an environment measure $(\mu_s)_{s\in[0,T]}$ and an induced optimal branching diffusion process $\hat{Z}$
	such that for all $\varphi:\R^d\to\R$ bounded,
    \begin{equation*}
    \langle\mu_s,\varphi\rangle = \E\bigg[\sum_{k \in \hat{K}^1_s} \varphi\big(\hat{X}^k_s\big)\bigg].
    \end{equation*}	
\end{itemize}

In the next section, we study this problem for a simple model using a PDE approach to provide a first insight into the MFG with branching. Then we follow a probabilistic approach in a general setting by considering a weak formulation of this problem, based upon the relaxed formulation of stochastic control problem.

\section{PDE Approach}
\label{sec:pde}

  In this section, we consider a simple model and we follow the PDE approach described in Cardaliaguet~\cite{cardaliaguet2010} in order to provide a first insight into MFG with branching. 
   First we introduce the corresponding MFG equations in Section~\ref{subsec:MFG_PDE} and we establish existence of a solution in Theorem~\ref{thm:mfgedp}. 
   The proof is completed in Section~\ref{subsec:DeductionMFGEquation}, where we provide a detailed analysis of the MFG equations.
   Then we show in Section~\ref{subsec:nash} that a solution to the MFG with branching provides an approximate Nash-equilibrium for large population games, thus justifying the MFG formulation. 
	Finally, we study a numerical example in Section~\ref{subsec:numeric} to illustrate the behaviour of the equilibrium of the MFG with branching.

\subsection{PDE Formulation}
\label{subsec:MFG_PDE}

 In the simple framework of this section, we consider the following parameters for the model: $f:\R^d\x\Mc_1(\R^d)\to\R$, $g:\R^d\x\Mc_1(\R^d)\to\R$, $\gamma:\R^d\to \R_+$,  and $(p_\ell)_{\ell\in\N}$, $p_\ell:\R^d\to [0,1]$, such that $\sum_{\ell\in\N} {p_\ell}=1.$
It is implicitly assumed that $b(t,x,\mu,a)=a,$ $\sigma(t,x,\mu,a)=\sqrt{2}$ and $f(t,x,\mu,a)=\frac{1}{2}|a|^2 + f(x,\mu).$
	Then we introduce the following PDE formulation of the MFG with branching:
\begin{eqnarray}
	\partial_t u + \Delta u - \frac{1}{2}\left|Du\right|^2 - \gamma u + f\left(\cdot,m\right) = 0, & &  \text{in }[0,T)\x\R^d, \label{eq:hjb}\\
	\partial_t m - \Delta m - \mathrm{div}\left(m Du\right) - \gamma\sum_{\ell\in\N} {(\ell-1) p_{\ell}}\, m = 0, & & \text{in }(0,T]\x\R^d, \label{eq:fk}\\
	u(T,\cdot) = g\left(\cdot,m(T)\right), ~ m(0) = m_0, & & \text{in }\R^d. \label{eq:bc}
\end{eqnarray}
The derivation of these equations from the MFG with branching introduced in  Section~\ref{subsec:MFG_descrip} is performed in Section~\ref{subsec:DeductionMFGEquation}. 

\begin{Definition}
 A solution to the MFG with branching \eqref{eq:hjb}--\eqref{eq:bc} is a couple $(u,m)\in\Cc^{1,2}([0,T]\x\R^d)\times \cC([0,T],\Mc_1(\R^d))$ such that $u$ is a classical solution to~\eqref{eq:hjb} and $m$ is a weak solution to~\eqref{eq:fk}
 satisfying the boundary condition~\eqref{eq:bc},
in the sense that for all $\varphi\in\cC_c^{\infty}([0,T)\x\R^d)$,
\begin{equation*}	
	\int_0^T \!\!\int_{\R^d} \!\!
	{\Big(\partial_t\varphi + \Delta \varphi - Du \cdot D\varphi + \gamma\sum_{\ell\in\N}{(\ell-1) p_\ell}\,\varphi\Big)(t,x)\,m(t,dx)\,dt}
	+ \int_{\R^d} {\varphi(0,x) \, m_0(dx)} = 0.
\end{equation*}
\end{Definition}
 
Under the following assumptions, we can show that there exists a solution to this problem by extending the arguments in~\cite{cardaliaguet2010}.
The proof is postponed to Section~\ref{sec:existence}. 

\begin{Assumption}\label{ass:cardaliaguet}
\rmi $f$ and $g$ are bounded and there exists  $L>0$ such that for all $x,y\in\R^d$, $\mu,\nu\in\Mc_1(\R^d)$, 
\begin{equation*}
 \big|f(x,\mu) - f(y,\nu)\big| + \big|g(x,\mu) - g(y,\nu)\big|  \leq L \big(|x-y| + W_1(\mu,\nu)\big).
\end{equation*}
\rmii $\gamma$ and $\gamma\sum_{\ell\in\N}{\ell p_\ell}$ are Lipschitz and bounded. \\
\rmiii $g(\cdot,\mu)\in\Cc^3_b(\R^d)$ for all $\mu\in\Mc_1(\R^d)$.\\
\rmiv $\int_{\R^d} {|x|^2 \,m_0(dx)}<\infty$.
\end{Assumption}

\begin{Theorem}
\label{thm:mfgedp}
 Under Assumption~\ref{ass:cardaliaguet}, there exists a solution to the MFG with branching \eqref{eq:hjb}--\eqref{eq:bc}.
\end{Theorem}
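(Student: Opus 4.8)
The plan is to construct a solution by a Schauder fixed-point argument, following the scheme of~\cite{cardaliaguet2010} but carefully accommodating the two structural novelties of the branching system: the zero-order term $-\gamma u$ in the Hamilton--Jacobi--Bellman equation~\eqref{eq:hjb}, and---more importantly---the reaction term $\gamma\sum_{\ell}(\ell-1)p_\ell\,m$ in the Fokker--Planck equation~\eqref{eq:fk}, which destroys conservation of mass and forces us to work in the space $\cC([0,T],\Mc_1(\R^d))$ of finite-measure flows rather than probability-measure flows. Concretely, I would fix a flow $m$ in a set $\cK\subset\cC([0,T],\Mc_1(\R^d))$ to be specified, plug it into the coupling and solve the backward equation~\eqref{eq:hjb} with terminal data $g(\cdot,m(T))$ to obtain a classical solution $u=u[m]$, then freeze the drift $-Du[m]$ and solve the forward equation~\eqref{eq:fk} to produce a new flow $\tilde m=\Psi(m)$. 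A fixed point of $\Psi$ is exactly a solution to~\eqref{eq:hjb}--\eqref{eq:bc}.

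For the HJB step I would establish a priori estimates uniform in $m$. Since $f(\cdot,m)$ and $g(\cdot,m(T))$ are bounded uniformly in $m$ by Assumption~\ref{ass:cardaliaguet}(i), and $\gamma\ge 0$ is bounded by Assumption~\ref{ass:cardaliaguet}(ii), the comparison principle yields $\|u[m]\|_\infty\le C$ independent of $m$, with the term $-\gamma u$ acting only as a bounded lower-order perturbation that does not affect solvability. The quadratic Hamiltonian $\tfrac12|Du|^2$ is controlled by the usual parabolic/Bernstein gradient estimates (or a Hopf--Cole substitution, modulo the nonconservative $\gamma u$ contribution); combined with the $\Cc^3_b$ regularity of $g$ in Assumption~\ref{ass:cardaliaguet}(iii) and Schauder theory, this gives a classical $\Cc^{1,2}$ solution with $\|Du[m]\|_\infty+\|D^2u[m]\|_\infty\le C$ uniformly in $m$. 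The crucial output is a uniform Lipschitz bound on the drift $-Du[m]$.

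The Fokker--Planck step is where the branching mechanism bites and where I expect the \emph{main obstacle} to lie. With the drift bounded and Lipschitz uniformly in $m$, equation~\eqref{eq:fk} is a linear parabolic equation with bounded Lipschitz zero-order source $c(x):=\gamma(x)\sum_\ell(\ell-1)p_\ell(x)$ (Assumption~\ref{ass:cardaliaguet}(ii)); it admits a unique nonnegative weak solution $\tilde m$, representable by a Feynman--Kac formula for the diffusion $dX=-Du[m]\,dt+\sqrt2\,dB$ weighted by $\exp(\int_0^t c\,ds)$. Testing against the constant function gives $\tfrac{d}{dt}\,\tilde m_t(\R^d)=\int c\,d\tilde m_t\le\|c\|_\infty\,\tilde m_t(\R^d)$, hence $\tilde m_t(\R^d)\le e^{\|c\|_\infty t}\,m_0(\R^d)$; testing against $|x|^2$ and using Assumption~\ref{ass:cardaliaguet}(iv) with the uniform drift bound yields a uniform second-moment bound by Gr\"onwall. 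Together these give a uniform modulus of continuity in the $W_1$ metric on $\Mc_1(\R^d)$, so that taking $\cK$ to be the flows with total mass $\le e^{\|c\|_\infty T}m_0(\R^d)$, second moment bounded by the Gr\"onwall constant, and a fixed $W_1$-modulus of continuity, makes $\cK$ a convex, compact subset of $\cC([0,T],\Mc_1(\R^d))$ that is stable under $\Psi$.

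Finally I would check continuity of $\Psi$ on $\cK$: if $m^n\to m$, stability of~\eqref{eq:hjb} under Lipschitz perturbation of $f(\cdot,m)$ and $g(\cdot,m(T))$ gives $u[m^n]\to u[m]$ in $\Cc^{1,2}$, hence $Du[m^n]\to Du[m]$ locally uniformly, and passing to the limit in the linear weak formulation of~\eqref{eq:fk}---using the uniform moment bounds to control the tails---yields $\Psi(m^n)\to\Psi(m)$. Schauder's theorem then provides $m\in\cK$ with $\Psi(m)=m$, and $(u[m],m)$ is the desired solution. The difficulty to watch throughout is precisely the failure of mass conservation: unlike the classical case the candidate set cannot be normalized to probability measures, so compactness must be extracted from explicit, drift- and reaction-uniform bounds on both total mass and second moment, and the Wasserstein-type metric $W_1$ on finite (rather than probability) measures must be used consistently.
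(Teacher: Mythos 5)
Your proposal follows essentially the same route as the paper: a Schauder fixed-point argument on the compact convex subset of $\Cc([0,T],\Mc_1(\R^d))$ cut out by uniform total-mass and second-moment bounds together with a $W_1$-H\"older modulus of continuity, with the HJB step supplying a drift $-Du[m]$ bounded uniformly in $m$ and the Fokker--Planck step closing the loop. The only differences are cosmetic: the paper derives the uniform gradient bound probabilistically (Lemma~\ref{lem:gradient}) and gets continuity of the fixed-point map from uniform local H\"older estimates on $Du_n$ plus Arzel\`a--Ascoli rather than the full $\Cc^{1,2}$ convergence you assert, but your analytic versions of these estimates serve the same purpose.
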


\begin{Remark}
\label{rem:uniqueness_edp}
\rm
	$\mathrm{(i)}$ When the coefficients $\gamma$ and $(p_\ell)_{\ell\in\N}$ are constant, the problem reduces to the classical MFG studied in~\cite{cardaliaguet2010} by a simple change of variable. Indeed, the normalized measure
	\begin{equation*}
		\bar m(t) := \exp{\Big(-\gamma\sum_{\ell\in\N}{(\ell-1) p_{\ell}} \,t\Big)}m(t),
	\end{equation*}
	satisfies the classical Fokker--Planck equation
	\begin{equation*}
	 \partial_t \bar m - \Delta \bar m - \mathrm{div}\left(\bar m Du\right) = 0, \quad \text{in }(0,T]\x\R^d.
	\end{equation*}
	
	\noindent $\mathrm{(ii)}$ 
	In view of the above, when $\gamma$ and $(p_\ell)_{\ell\in\N}$ are constant, 
	uniqueness of the solution to~\eqref{eq:hjb}--\eqref{eq:bc} follows under the usual monotonicity conditions:
	for all $\mu, \nu \in \Mc_1(\R^d)$ such that $\mu\neq \nu$ and $\mu(\R^d) = \nu(\R^d),$
	\begin{equation*}
	  \int_{\R^d} {\big(f(x, \mu) - f(x, \nu) \big) \,(\mu-\nu)(dx)} > 0, \quad
	  \int_{\R^d} {\big(g(x,\mu) - g(x,\nu) \big)} \,(\mu-\nu)(dx) \ge 0.
	\end{equation*}
	Otherwise, uniqueness does not follow from a direct extension of the arguments in~\cite{cardaliaguet2010} as we have to deal with an additional term of the form $\int_{\R^d} {(u_1-u_2)\, d(m_1-m_2)}$ whose sign is undetermined.
\end{Remark}

\subsection{Analysis of the MFG Equations}
\label{subsec:DeductionMFGEquation}

\subsubsection{Hamilton--Jacobi--Bellman Equation}
\label{sec:hjb}
   Let us study first the HJB equation~\eqref{eq:hjb} and show that it derives from the optimal control problem solved by each agent in the MFG with branching described in Section~\ref{subsec:MFG_descrip}.
    
	Let $(\Om,\cF,(\cF_s)_{s\in [0,T]},\P)$ be a filtered probability space equipped with a Brownian motion $(B_s)_{s\in [0,T]}$ and a Poisson random measure $Q(ds,dz)$ on $[0,T]\x\R_+$ with intensity $ds\,dz$. 
	Denote by $\Ac$ the collection of $\R^d$--valued predictable processes $\alpha$ such that $ \int_0^T {\left|\alpha_s\right| ds} <\infty.$
	Given an environment measure $(\mu_s)_{s\in[0,T]}, \mu_s\in\Mc_1(\R^d),$ we consider the following optimal control problem:
	\begin{equation}\label{eq:def_v}
		v(t,x,\mu) := \inf_{\alpha\in\Ac} {\big\{J(t,x,\mu,\alpha)\big\}},
	\end{equation}
	with
\begin{equation*}
 J(t,x,\mu,\alpha) 
	  :=  \E\left[\int_t^{\tau} {\left(f(X^{t,x,\alpha}_s,\mu_s) + \frac{1}{2} \left|\alpha_s\right|^2\right)\, ds} + g(X^{t,x,\alpha}_T,\mu_T) \mathbf{1}_{\tau= T}\right],
\end{equation*}
where 
\begin{equation*}
 X^{t,x,\alpha}_s := x + \int_t^s {\alpha_r \,dr} + \sqrt{2} \left(B_s-B_t\right),\qquad s\in [t,T],
\end{equation*}
and 
\begin{equation*}
 \tau := \inf \big\{s > t ;\, Q\big(\{s\}\x [0,\gamma(X^{t,x,\alpha}_s)]\big) =1 \big\}\wedge T.
\end{equation*}
 It corresponds to the optimal control problem solved by each agent when the environment measure $\mu$ is fixed. The next proposition ensures that the value function $v$ solves the HJB equation~\eqref{eq:hjb} with $m$ replaced by $\mu$ and that each agent can use a Markovian optimal control $\hat \alpha(t,x) = - Dv(t,x,\mu).$

\begin{Proposition}
\label{prop:hjb}
	Let Assumption~\ref{ass:cardaliaguet} hold. If we assume further that
	$s\mapsto\mu_s$ is H\"older continuous,
	then the value function $v(\cdot,\mu)$ in \eqref{eq:def_v} belongs to $\Cc^{1,2}_b([0,T]\x\R^d)$ and is the unique (bounded) classical solution to
\begin{equation}\label{eq:HJB}
 \partial_t u + \Delta u - \frac{1}{2}|Du|^2 - \gamma u + f(\cdot,\mu) = 0 ~~\text{in }[0,T)\x\R^d,\quad u(T,\cdot) = g(\cdot,\mu_T) ~~ \text{in }\R^d.
\end{equation}
In addition, $\hat{\alpha}(t,x) := -Dv(t,x,\mu)$ is an optimal Markov control.
\end{Proposition}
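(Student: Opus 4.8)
The plan is to remove the random killing time by conditioning, reduce the resulting semilinear equation to a benign one via a Hopf--Cole transformation, and then identify the value function with the PDE solution through a verification argument.

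\emph{Step 1: reformulation.} Since $Q$ is independent of $B$ and each $\alpha\in\Ac$ is adapted to the Brownian filtration, the trajectory $X^{t,x,\alpha}$ is independent of $Q$ and, conditionally on $X^{t,x,\alpha}$, the killing time $\tau$ has survival function $\exp(-\int_t^s\gamma(X^{t,x,\alpha}_r)\,dr)$. Integrating out $\tau$ therefore gives the equivalent discounted functional
\begin{equation*}
 J(t,x,\mu,\alpha)=\E\Big[\int_t^T e^{-\int_t^s\gamma(X_r)\,dr}\big(f(X_s,\mu_s)+\tfrac12|\alpha_s|^2\big)\,ds+e^{-\int_t^T\gamma(X_r)\,dr}\,g(X_T,\mu_T)\Big],
\end{equation*}
where we abbreviate $X=X^{t,x,\alpha}$. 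This is a standard stochastic control problem with state-dependent discount rate $\gamma$; since $a\mapsto a\cdot p+\tfrac12|a|^2$ is minimised at $a=-p$ with value $-\tfrac12|p|^2$, its dynamic programming equation is precisely \eqref{eq:HJB}, and the candidate optimal feedback is $\hat\alpha=-Dv$.

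\emph{Step 2: existence of a classical solution.} The obstacle is the quadratic gradient term $-\tfrac12|Du|^2$. I would apply the Hopf--Cole transformation $\phi:=e^{-u/2}$: a bounded $u\in\Cc^{1,2}$ solves \eqref{eq:HJB} if and only if $\phi$ solves
\begin{equation*}
 \partial_t\phi+\Delta\phi-\gamma\,\phi\log\phi-\tfrac12 f(\cdot,\mu)\,\phi=0\ \ \text{in }[0,T)\x\R^d,\qquad \phi(T,\cdot)=e^{-g(\cdot,\mu_T)/2},
\end{equation*}
the quadratic first-order term cancelling exactly while the lower-order term $-\gamma u$ becomes the zeroth-order nonlinearity $\gamma\phi\log\phi$. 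A maximum-principle a priori bound (equivalently, the trivial bound $|v|\le\|f\|_\infty T+\|g\|_\infty$ obtained from the control representation with $\alpha\equiv0$) confines $\phi$ to a compact subinterval of $(0,\infty)$, on which $\phi\mapsto\gamma\phi\log\phi+\tfrac12 f\phi$ is Lipschitz. Using the Hölder-in-time continuity of $s\mapsto f(\cdot,\mu_s)$ (which follows from Assumption~\ref{ass:cardaliaguet}\,(i) and the assumed Hölder continuity of $s\mapsto\mu_s$), the Lipschitz-in-space data, the $\Cc^3_b$ terminal condition, and the uniform ellipticity $\sigma\sigma^*=2\,\mathrm{Id}$, a fixed-point argument on the associated linear parabolic equation together with Schauder estimates produces a unique bounded classical solution $\phi\in\Cc^{1,2}_b$, bounded away from $0$. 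Undoing the transformation yields $u:=-2\log\phi\in\Cc^{1,2}_b$, a bounded classical solution of \eqref{eq:HJB}; in particular $Du=-2D\phi/\phi$ is bounded.

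\emph{Step 3: verification and uniqueness.} With $u\in\Cc^{1,2}_b$ in hand, I would apply It\^o's formula to $s\mapsto e^{-\int_t^s\gamma(X_r)dr}\,u(s,X_s)$ along an arbitrary $\alpha\in\Ac$. Using \eqref{eq:HJB} to replace $\partial_t u+\Delta u-\gamma u$ by $\tfrac12|Du|^2-f$, and the pointwise identity $\tfrac12|Du|^2+\alpha\cdot Du=\tfrac12|Du+\alpha|^2-\tfrac12|\alpha|^2\ge-\tfrac12|\alpha|^2$, the finite-variation part is bounded below; since $Du$ is bounded the stochastic integral is a true martingale, so taking expectations gives $u(t,x)\le J(t,x,\mu,\alpha)$, whence $u\le v$. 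Equality holds for $\alpha_s=-Du(s,X_s)$, which is admissible because $Du$ is bounded, so $u=v$ and $\hat\alpha=-Dv$ is an optimal Markov control. Uniqueness in the class of bounded classical solutions follows along the same lines: the difference $w=u_1-u_2$ of two such solutions solves the linear equation $\partial_t w+\Delta w-\tfrac12(Du_1+Du_2)\cdot Dw-\gamma w=0$ with $w(T,\cdot)=0$, and the maximum principle (the drift $\tfrac12(Du_1+Du_2)$ being bounded) forces $w\equiv0$.

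The delicate point is Step~2: the quadratic gradient growth is what prevents a direct application of linear parabolic theory, and the Hopf--Cole substitution is the device that removes it. The only complication relative to the classical (no-branching) case of \cite{cardaliaguet2010} is the surviving zeroth-order nonlinearity $\gamma\phi\log\phi$ produced by the killing term $-\gamma u$; this is harmless once the a priori $L^\infty$ bounds place $\phi$ in a region where the nonlinearity is Lipschitz, and the Hölder continuity of $s\mapsto\mu_s$ provides exactly the time-regularity of the coefficients needed to close the Schauder estimates.
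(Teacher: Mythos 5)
Your proof is correct and follows essentially the same route as the paper: the Hopf--Cole substitution $w=e^{-u/2}$ to remove the quadratic gradient term (the paper gets existence for the transformed equation from constant upper/lower solutions and Pao's monotone method, where you use a fixed point with Schauder estimates — both standard once $w$ is confined to a compact subinterval of $(0,\infty)$), followed by a verification argument and the completion of squares identifying $\hat\alpha=-Dv$. The only point where you diverge, and which deserves a caveat, is Step~1: integrating out $\tau$ to obtain the discounted functional presumes the control is adapted to the Brownian filtration, whereas $\Ac$ in the paper consists of processes predictable for the full filtration carrying both $B$ and $Q$; the paper avoids this by keeping the stopped process and using the compensator of $Q$ to show $\E\left[u(\tau,X_\tau)\mathbf{1}_{\tau<T}\right]=\E\left[\int_t^\tau\gamma(X_s)u(s,X_s)\,ds\right]$, which yields $u\le J(\alpha)$ for every $\alpha\in\Ac$, including controls that may depend on the killing clock.
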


\begin{proof}
	Let us write a short proof of this classical result for the sake of completeness.
	
	\noindent\rmi We start by proving that there exists a classical solution to PDE~\eqref{eq:HJB}. To this end, we use the well-known Hopf--Cole transform: setting $w=e^{-\frac{u}{2}}$ we easily check that $u$ is a solution to~\eqref{eq:HJB} if and only if $w$ is a solution to 
\begin{equation}\label{eq:hopf-cole}
 \partial_t w + \Delta w  =  w\left(\frac{1}{2} f(\cdot,\mu)+ \gamma \log(w)\right)  ~~\text{in }[0,T)\x\R^d,\quad w(T,\cdot) = e^{-\frac{g(\cdot,\mu_T)}{2}} ~~ \text{in }\R^d.
\end{equation}
Since $f$ and $g$ are bounded, one can easily find constant lower and upper solutions $(\underline{w},\overline{w})$ such that $0<\underline{w}<\overline{w}<\infty$.
Then existence of a classical solution $\underline{w}\leq w \leq\overline{w}$ to PDE~\eqref{eq:hopf-cole} follows from classical arguments, see, \eg, Pao~\cite[Theorem 7.2.1]{pao1992}. 

\noindent \rmii The conclusion follows by a verification theorem. Indeed, using It\^o's formula and the fact that $u$ satisfies~\eqref{eq:HJB}, we derive that for all $\alpha\in\Ac$,
\begin{equation*}
 J(t,x,\mu,\alpha) \geq u(t,x) + \E\left[\int_t^\tau {\gamma(X^{t,x,\alpha}_s)u(s,X^{t,x,\alpha}_s)\,ds} - u(\tau, X^{t,x,\alpha}_\tau)\1_{\tau<T}\right].
\end{equation*}
Additionally, it holds
\begin{align*}
  \E\left[u(\tau, X^{t,x,\alpha}_\tau)\1_{\tau<T}\right] 
   & =  \E\Big[\int_{(t,\tau]\x\R_+} \!\!\! {u(s,X^{t,x,\alpha}_s) \1_{z\leq \gamma(X^{t,x,\alpha}_s)}\, Q(ds,dz)}\Big] \\
   & =  \E\Big[\int_t^\tau \!\! {\gamma(X^{t,x,\alpha}_s)u(s,X^{t,x,\alpha}_s)\,ds} \Big].
\end{align*}
We deduce that $v(t,x,\mu)\geq u(t,x)$. To conclude, it remains to repeat the same computation with the Markov control $\hat{\alpha}$.
\qed  
\end{proof}

 We conclude this section with an important technical lemma.

\begin{Lemma}
\label{lem:gradient}
 Under the assumptions of Proposition~\ref{prop:hjb}, the gradient $Dv(\cdot,\mu)$ is bounded uniformly w.r.t. $\mu$.
\end{Lemma}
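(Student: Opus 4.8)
The plan is to establish a Lipschitz estimate in the space variable, namely $|v(t,x,\mu)-v(t,y,\mu)|\le C|x-y|$ with a constant $C$ independent of $t$, $\mu$ and of $x,y$. Since Proposition~\ref{prop:hjb} already guarantees $v(\cdot,\mu)\in\Cc^{1,2}_b([0,T]\x\R^d)$, such a uniform Lipschitz bound immediately yields the claimed uniform bound on $Dv(\cdot,\mu)$.

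First I would recast the control problem in a form without the random horizon $\tau$. Conditioning on the Brownian motion and integrating out the Poisson measure $Q$ --- exactly the computation carried out at the end of the proof of Proposition~\ref{prop:hjb} --- the conditional survival probability up to time $s$ equals $e^{-\int_t^s \gamma(X^{t,x,\alpha}_r)\,dr}$, so that
\begin{equation*}
 J(t,x,\mu,\alpha) = \E\left[\int_t^T e^{-\Gamma^{x}_s}\Big(f(X^{t,x,\alpha}_s,\mu_s) + \tfrac12|\alpha_s|^2\Big)\,ds + e^{-\Gamma^{x}_T} g(X^{t,x,\alpha}_T,\mu_T)\right],
\end{equation*}
where $\Gamma^{x}_s := \int_t^s \gamma(X^{t,x,\alpha}_r)\,dr$. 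Since $\gamma\ge 0$ is bounded, the discount factor lies between $e^{-\|\gamma\|T}$ and $1$.

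Next, fix $t$, $\mu$ and two starting points $x,y$, and feed the \emph{same} control $\alpha$ and the \emph{same} Brownian motion into both problems. Because the dynamics carry no state dependence ($b=\alpha$, $\sigma=\sqrt2$), the two trajectories satisfy $X^{t,x,\alpha}_s - X^{t,y,\alpha}_s = x-y$ for every $s$. Using the Lipschitz bounds on $f,g$ from Assumption~\ref{ass:cardaliaguet}(i) and on $\gamma$ from Assumption~\ref{ass:cardaliaguet}(ii), together with the elementary inequality $|e^{-a}-e^{-b}|\le|a-b|$ for $a,b\ge0$, I would estimate $J(t,x,\mu,\alpha)-J(t,y,\mu,\alpha)$ term by term. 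All contributions are bounded by $C|x-y|$, except the one arising from the running control cost $\tfrac12|\alpha_s|^2$ multiplied by the difference of discount factors, which is bounded by $L_\gamma\,T\,|x-y|\;\E\!\left[\int_t^T\tfrac12|\alpha_s|^2\,ds\right]$.

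The main obstacle is precisely this quadratic control term: as $\alpha$ is a priori unbounded, one must control the energy $\E[\int_t^T|\alpha_s|^2\,ds]$. I would resolve this by restricting to $\eps$-optimal controls. For such $\alpha$, the boundedness of $v$, $f$ and $g$ (uniform in $\mu$), combined with the lower bound $e^{-\Gamma^x_s}\ge e^{-\|\gamma\|T}$, forces $\E[\int_t^T\tfrac12|\alpha_s|^2\,ds]\le M$ for some $M$ independent of $t,\mu,x$. Taking the infimum over such controls and letting $\eps\to0$ then gives $|v(t,x,\mu)-v(t,y,\mu)|\le C|x-y|$ with $C$ uniform, and hence the uniform gradient bound.
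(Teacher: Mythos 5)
Your proposal is correct and follows essentially the same route as the paper: restrict to controls with a uniformly bounded quadratic energy (the paper uses controls outperforming $\alpha=0$, you use $\eps$-optimal controls, which yields the same uniform bound $\E[\int_t^T\tfrac12|\alpha_s|^2\,ds]\le M$), then show the cost is Lipschitz in $x$ uniformly over such controls. Your write-up merely fills in the final Lipschitz estimate --- translation invariance of the dynamics, the survival-probability rewriting and the bound $|e^{-a}-e^{-b}|\le|a-b|$ --- which the paper leaves to the reader.
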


\begin{proof}
 Since $f$ and $g$ are bounded, we can restrict to admissible controls satisfying
 \begin{equation} \label{eq:bounded_ctrl}
 \E\left[\frac{1}{2}\int_0^T e^{-\|\gamma\| s} \left|\alpha_s\right|^2\,ds\right]\leq 2\big(T \|f\| + \|g\|\big).
 \end{equation}
 Indeed, the constant control $\alpha=0$ performs better than any control which does not satisfy this condition. 
	Then it holds that
\begin{equation*}
 \left|v(t,x,\mu) - v(t,y,\mu)\right| ~\leq~ \sup_{\alpha} \big\{\big|J(t,x,\mu,\alpha)-J(t,y,\mu,\alpha)\big|\big\},
\end{equation*}
where the supremum is taken over all controls satisfying \eqref{eq:bounded_ctrl}.
To conclude, it remains to see that the restriction of the cost function to such control is Lipchitz continuous in $x$, uniformly w.r.t. $(t,\mu,\alpha)$. 
\qed
\end{proof}

\subsubsection{Fokker--Planck Equation}
\label{sec:fokker-planck}

 Let us study next the Fokker--Planck equation~\eqref{eq:fk} and show that the distribution of a branching diffusion where every agent uses the optimal control of Proposition~\ref{prop:hjb} satisfies~\eqref{eq:fk} with $Du$ replaced by $Dv(\cdot,\mu).$
 
Let $b:[0,T]\x\R^d\to \R$ be bounded satisfying there exists $C>0$ such that for all $s,t\in[0,T]$, $x,y\in\R^d,$
\begin{equation*}
 \big|b(t,x)-b(s,y)\big|\leq C\Big(|x-y| + \sqrt{|t-s|}\Big).
\end{equation*}
We consider a branching diffusion process, starting at time $0$ from one particle at position $X_0$ with distribution $m_0,$ such that the particles follow a diffusion with drift $b$ and diffusion coefficient $\sqrt{2}$ and die at rate $\gamma$ while giving birth to $\ell\in\N$ particles with probability $p_\ell.$
We denote by $(X^k_t)_{k\in K_t}$ the state of the branching diffusion process at time $t\in[0,T]$ and we define the measure $m(t)$ as follows: for all $\varphi:\R^d\to\R$ bounded,
\begin{equation} \label{eq:def_mt}
	\langle m(t), \varphi\rangle := \E\bigg[\sum_{k\in K_t}\varphi\big(X^k_t\big)\bigg].
\end{equation}
We refer to Section~\ref{sec:branching-diffusion} for more details.
	
\begin{Proposition}
\label{prop:fk}
	Under Assumption~\ref{ass:cardaliaguet}, the map $t\mapsto m(t)$  in~\eqref{eq:def_mt} belongs to $\Cc([0,T],\Mc_1(\R^d))$ and is the unique weak solution to
	\begin{equation}\label{eq:fokker-planck}
		\partial_t m - \Delta m + \mathrm{div}\left(m b\right) - \gamma\sum_{\ell\in\N}{(\ell-1) p_\ell}\, m = 0 ~~ \text{in }(0,T]\x\R^d,\quad m(0) = m_0 ~~ \text{in }\R^d,
	\end{equation}
   in the sense that for all $\varphi\in\cC_c^{\infty}([0,T)\x\R^d),$
	\begin{equation*}		 
		 \int_0^T\!\! \int_{\R^d}\!\!
		 {\Big(\partial_t\varphi + \Delta \varphi + b\cdot D\varphi + \gamma\sum_{\ell\in\N}{(\ell-1) p_\ell}\,\varphi\Big)(t,x)\,m(t,dx)\,dt}
		 + \int_{\R^d} {\varphi(0,x) \, m_0(dx)} = 0. 
	\end{equation*}
\end{Proposition}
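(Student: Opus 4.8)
The plan is to establish the semimartingale/Dynkin formula for the branching diffusion, then identify the resulting integro-differential dynamics with the weak formulation of the Fokker--Planck equation, and finally to treat regularity and uniqueness separately. I would rely heavily on Proposition~\ref{prop:dsm}, which already provides the martingale decomposition that is the natural engine for this computation.

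First I would apply Proposition~\ref{prop:dsm} with $\Phi = \mathrm{Id}$ and $\bar\varphi = (\varphi)_{k\in\K}$, where $\varphi\in\cC^\infty_c([0,T)\x\R^d)$ is time-independent (or, more precisely, I would want a time-dependent test function, so I would either enlarge the class of functionals to allow $\varphi=\varphi(t,x)$ or run the computation with a fixed time slice and integrate by parts in time afterwards). With $\Phi=\mathrm{Id}$, the functional $\Phi_{\bar\varphi}(Z_s)=\sum_{k\in K_s}\varphi(X^k_s)$ has expectation $\langle m(s),\varphi\rangle$ by definition~\eqref{eq:def_mt}. Taking expectations in the martingale decomposition kills the martingale term and yields
\begin{equation*}
  \langle m(s),\varphi\rangle = \langle m_0,\varphi\rangle + \E\Big[\int_0^s \Hc^{\mu,\bar\alpha_r}_r\Phi_{\bar\varphi}(Z_r)\,dr\Big].
\end{equation*}
For $\Phi=\mathrm{Id}$ the second-order term in~\eqref{eq:bd-generator} drops out and the generator collapses to $\sum_k \Lc^{\mu,a^k}_r\varphi(X^k_r) + \sum_k\gamma(X^k_r)\big(\sum_\ell \ell\,p_\ell(X^k_r)-1\big)\varphi(X^k_r)$, using that $\Phi_{\bar\varphi}(e-\delta_{(k,x)}+\sum_{i=1}^\ell\delta_{(ki,x)})-\Phi_{\bar\varphi}(e)=(\ell-1)\varphi(x)$. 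With the prescribed coefficients $\sigma=\sqrt 2$ and drift $b$, the diffusion generator is $\Lc\varphi=\Delta\varphi + b\cdot D\varphi$, so by Fubini and~\eqref{eq:def_mt} the time-integrand becomes $\int_{\R^d}\big(\Delta\varphi + b\cdot D\varphi + \gamma\sum_\ell(\ell-1)p_\ell\,\varphi\big)\,m(r,dx)$. Integrating a time-dependent test function against $\partial_t$ and rearranging then produces exactly the stated weak formulation.

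The regularity claim $t\mapsto m(t)\in\cC([0,T],\Mc_1(\R^d))$ I would get from two inputs: the moment bound~\eqref{eq:moment} of Proposition~\ref{prop:def} (which, together with Assumption~\ref{ass:cardaliaguet}(iv) and standard SDE moment estimates on each $X^k$, controls $\int|x|\,m(t,dx)$ uniformly and rules out mass escaping to infinity), and the weak continuity in $t$, which follows from the Dynkin formula applied to differences $\langle m(t),\varphi\rangle-\langle m(s),\varphi\rangle$ bounded by $|t-s|$ times constants depending on $\|\varphi\|,\|D\varphi\|,\|D^2\varphi\|$; continuity in $W_1$ then follows by a density/approximation argument upgrading continuity against smooth compactly supported test functions to continuity in the Wasserstein metric.

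The main obstacle I anticipate is \emph{uniqueness} of the weak solution. Existence via the probabilistic representation is, as sketched, essentially bookkeeping on the generator. Uniqueness, however, requires a genuine PDE argument: the zeroth-order killing/branching term $\gamma\sum_\ell(\ell-1)p_\ell$ is bounded (Assumption~\ref{ass:cardaliaguet}(ii)) but of indefinite sign, so the equation is a linear parabolic equation with bounded drift $b$ and a bounded potential. I would prove uniqueness by a duality argument: given the difference of two solutions, test it against the solution $\psi$ of the backward adjoint (dual) parabolic problem with an arbitrary smooth terminal datum, using that the adjoint equation $-\partial_t\psi - \Delta\psi + b\cdot D\psi + \gamma\sum_\ell(\ell-1)p_\ell\,\psi = 0$ admits a sufficiently regular solution (which needs the Lipschitz-in-space regularity of $b$ and of the potential from Assumption~\ref{ass:cardaliaguet}(ii)). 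The delicate point is justifying the integration by parts against merely measure-valued $m$ and controlling growth at infinity, which is where the finite-first-moment control and the boundedness of all coefficients are essential.
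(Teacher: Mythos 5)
Your proposal follows essentially the same route as the paper: apply the semimartingale decomposition of Proposition~\ref{prop:dsm} with $\Phi=\mathrm{Id}$ and a common test function $\varphi$ to read off the weak formulation after taking expectations (the branching term collapsing to $\gamma\sum_{\ell}(\ell-1)p_\ell\,\varphi$ exactly as you compute), obtain the $\Cc([0,T],\Mc_1(\R^d))$ regularity from moment bounds plus an upgrade to $W_1$ continuity (the paper isolates this in Lemma~\ref{lem:technicpde}), and conclude uniqueness by the classical duality argument against the backward adjoint equation (the paper cites Ambrosio \emph{et al.} for this step). The only cosmetic difference is that the paper applies the martingale property directly to time-dependent test functions vanishing at $t=T$, rather than integrating by parts in time as you suggest.
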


\begin{proof}
	Let us show first that $m$ is a weak solution to~\eqref{eq:fokker-planck}. 
		Proposition~\ref{prop:dsm} ensures that, for all $\varphi\in\cC_c^{\infty}([0,T)\x\R^d),$  the process
\begin{equation*}
 \sum_{k\in K_t} {\varphi(t,X^k_t)} - \varphi(0,X_0)
 - \int_0^t {\sum_{k\in K_s} \Big(\partial_t\varphi + \Delta \varphi + b\cdot D\varphi + \gamma\sum_{\ell\in\N}{(\ell-1) p_\ell}\,\varphi\Big)(s,X^k_s) \, ds},
\end{equation*}
is a martingale.
Since $\varphi(T,\cdot)=0$, we deduce that
\begin{equation*}
 \E\big[\varphi(0,X_0)\big]
 + \int_0^T {\E\bigg[\sum_{k\in K_s} \Big(\partial_t\varphi + \Delta \varphi + b\cdot D\varphi + \gamma\sum_{\ell\in\N}{(\ell-1) p_\ell}\,\varphi\Big)(s,X^k_s)\bigg] \,ds} = 0,
\end{equation*}
which is the desired result.
Next the fact that $m$ belongs to $\Cc([0,T],\Mc_1(\R^d))$ follows from Lemma~\ref{lem:technicpde} below. As for uniqueness, it comes from a classical duality argument, see, \eg, Proposition~3.1 in Ambrosio~\textit{et al.}~\cite{ASZ09}.
\qed
\end{proof}

	For further developments, let us collect a couple of properties on the solution to the Fokker--Planck equation. 
	The proof is postponed to Appendix~\ref{app:proof-lemma}.

\begin{Lemma}
\label{lem:technicpde}
	Under the assumptions of Proposition~\ref{prop:fk}, there exists a constant $C>0$ such that for all $s, t\in[0,T]$,
	\begin{equation*}
		 \int_{\R^d} {\left(1 + |x|^2\right) m(t,dx)} \leq C \left(1+\|b\|^2\right)
		 ~~\text{and}~~
		 W_1(m(t),m(s))  \leq C\left(1+\|b\|^2\right)\sqrt{|t-s|}.
	\end{equation*}
\end{Lemma}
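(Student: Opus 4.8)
The statement bundles two estimates: a uniform second-moment bound on $m(t)$, and a $\sqrt{|t-s|}$ Hölder estimate for $m$ in the $W_1$ metric. Both should follow by probabilistic arguments directly from the representation \eqref{eq:def_mt}, $\langle m(t),\varphi\rangle = \E[\sum_{k\in K_t}\varphi(X^k_t)]$, rather than from the PDE itself.

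\textbf{Step 1 (mass control).} First I would control the total mass $m(t)(\R^d)=\E[N_t]$, where $N_t=\#K_t$. Since $\gamma\sum_\ell \ell p_\ell$ is bounded by Assumption~\ref{ass:cardaliaguet}(ii), the moment estimate \eqref{eq:moment} from Proposition~\ref{prop:def} (applied with one initial particle, $n=1$) gives $\E[\sup_{s\le T} N_s]\le \exp(\|\gamma\sum_\ell \ell p_{\ell+1}\|T)=:C_0<\infty$. This uniform bound on the expected number of live particles is what lets me turn per-particle estimates into estimates on $m$.

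\textbf{Step 2 (second moment).} Taking $\varphi(x)=1+|x|^2$ in \eqref{eq:def_mt}, I would estimate $\E[\sum_{k\in K_t}(1+|X^k_t|^2)]$. Apply the martingale property of Proposition~\ref{prop:dsm} with $\Phi=\mathrm{Id}$ and $\bar\varphi^k(x)=1+|x|^2$ (truncating to stay in $\Cc^2_b$ if needed, then passing to the limit); the generator \eqref{eq:bd-generator} produces the diffusion part $\Lc^{\mu,a}_s\varphi$, which for $\varphi=1+|x|^2$ and $\sigma=\sqrt2$ equals $2\,\mathrm{tr}(I)+2 b\cdot x$, together with the branching term $\gamma\sum_\ell(\ell-1)p_\ell\,\varphi$. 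Writing $h(t):=\int(1+|x|^2)m(t,dx)$, this yields a differential inequality of the form $h'(t)\le C(1+\|b\|)\,h(t)$ (using $2b\cdot x\le \|b\|^2+|x|^2$ and the boundedness of $\gamma\sum_\ell\ell p_\ell$), and Grönwall's lemma gives $h(t)\le C(1+\|b\|^2)$ uniformly in $t$, using Assumption~\ref{ass:cardaliaguet}(iv) for the finite initial value $h(0)$. This proves the first bound.

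\textbf{Step 3 (Wasserstein Hölder estimate).} For $s<t$ I would bound $W_1(m(t),m(s))$. The natural route is to build an explicit coupling between $m(s)$ and $m(t)$: transport each particle alive at time $s$ along its own trajectory to its position (or descendants' positions) at time $t$. For particles that neither die nor branch on $[s,t]$, the displacement is controlled by the SDE increment, $\E|X^k_t-X^k_s|\le \|b\|(t-s)+\sqrt2\,\E|B_t-B_s|\le C(1+\|b\|)\sqrt{t-s}$. The contributions from branching/death events on $[s,t]$ are of order the probability of such an event, which is $O(t-s)\le \sqrt{T}\sqrt{t-s}$ since $\gamma$ is bounded, and each such event moves only $O(1)$ mass in expectation by Step~1. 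Combining, and carefully accounting for the fact that $W_1$ here compares finite (non-probability) measures via the metric of Appendix~\ref{sec:WasDis_App}, gives $W_1(m(t),m(s))\le C(1+\|b\|^2)\sqrt{|t-s|}$.

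\textbf{Main obstacle.} The delicate point is Step~3: constructing a valid coupling that simultaneously handles the varying total mass and the creation/annihilation of particles, and checking it is admissible for the particular $W_1$ on $\Mc_1(\R^d)$ used in the paper (which must allow comparison of measures of possibly different mass). I expect the branching bookkeeping — accounting for the mass carried by newly born or recently deceased particles — to be the part requiring the most care, whereas the second-moment bound in Step~2 is routine Grönwall once the generator computation is written out.
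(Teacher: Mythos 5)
Your plan is essentially correct and, for the second-moment bound, follows the same route as the paper: mass control via Proposition~\ref{prop:def}, the semimartingale decomposition of Proposition~\ref{prop:dsm} applied to $|x|^2$ with a localization/Fatou argument, and Gr\"onwall. One slip in Step~2: the inequality $2b\cdot x\le\|b\|^2+|x|^2$ yields an \emph{additive} differential inequality $h'(t)\le C\,h(t)+C(1+\|b\|^2)$, not the multiplicative $h'(t)\le C(1+\|b\|)h(t)$ you wrote; the latter would give a bound exponential in $\|b\|$, which is not the claimed $C(1+\|b\|^2)$. The underlying idea is right, but the constant tracking matters here because the lemma is later applied with $b=-Du$ and the polynomial dependence on $\|b\|$ is what makes the fixed-point set $\Cc$ invariant.

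For Step~3 the paper takes the dual route rather than your primal one: it invokes the Kantorovich duality of Lemma~\ref{lem:duality}, which splits $W_1(m(t),m(s))$ into a supremum over $1$-Lipschitz test functions vanishing at $0$ plus the mass difference $|m(t)(\R^d)-m(s)(\R^d)|$, and then decomposes $\sum_{k\in K_t}\varphi(X^k_t)-\sum_{k\in K_s}\varphi(X^k_s)$ over three index sets: particles alive at both times (controlled by the $O(\sqrt{t-s})$ SDE increment), particles that died (controlled by the survival estimate $\P(k\in K_t\setminus K_s\mid\Fc_t)\le\|\gamma\|(s-t)$ times a first-moment bound), and newly born particles (controlled by conditioning on $\Fc_{T_k}$ and a first-moment bound on descendants). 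Your explicit coupling is the primal counterpart of exactly this decomposition and uses the same three estimates, but it forces you to construct an admissible transport plan between the cemetery-padded versions of two finite measures of different \emph{random} total mass and then argue that its expectation is admissible for $W_{1,m}$ --- precisely the bookkeeping you flag as the main obstacle. The duality formula buys you freedom from that construction: the mass discrepancy is isolated in a separate term that is bounded directly by the dynamics of $\E[N_t]$, and no coupling ever needs to be exhibited. I would recommend switching to the dual formulation; otherwise the approach is sound.
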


\subsubsection{Proof of Theorem~\ref{thm:mfgedp}}
\label{sec:existence}

	The main idea of the proof of Theorem~\ref{thm:mfgedp} is to apply the Schauder fixed point theorem to the function $\psi$ defined as follows: To any $\mu$ in a well-chosen subset of $\Cc([0,T],\Mc_1(\R^d))$, 
	we associate $\psi(\mu)=m$ where $m$ is the solution to PDE~\eqref{eq:fokker-planck} with $b =-Du$ and $u$ is the solution to PDE~\eqref{eq:HJB} corresponding to $\mu$.

	Let $C > 0$ be a fixed constant, we denote by $\Cc$ the collection of all maps $\mu:[0,T]\to\Mc_1(\R^d)$ such that
	\begin{equation*}
	W_1\big(\mu(t),\mu(s)\big)\leq C\sqrt{|t-s|} \quad \text{and} \quad \int_{\R^d} (1+|x|^2) \, \mu(t,dx) \leq C.
	\end{equation*}
	Then $\cC$ is a compact\footnote{The relative compactness follows from Arzel\`a--Ascoli Theorem since $\{\mu(t),\,\mu\in\cC\}$ is relatively compact in $\Mc_1(\R^d)$ for every $t \in [0,T]$, see Lemma \ref{lemm:Wp_compact}.} convex subset of $\cC([0,T],\Mc_1(\R^d))$ equipped with the uniform topology.
	Additionally, in view of Lemmas~\ref{lem:gradient} and~\ref{lem:technicpde}, there exists a suitable choice of constant $C$ such that $\psi(\Cc)\subset\Cc$.

To conclude the proof, it remains to show that $\psi$ is continuous on $\Cc$. We consider a sequence $(\mu_n)_{n\in\N}$ in $\cC$ converging to $\mu$ and denote $m_n:=\psi(\mu_n)$ and $m:=\psi(\mu)$. We aim to show that $m_n$ converges to $m$. Let $u_n$ be the solution to PDE~\eqref{eq:HJB} corresponding to $\mu_n$.
In view of Proposition~\ref{prop:hjb}, $u_n$ admits the probabilistic representation
\begin{equation*}
  u_n(t,x)= v\left(t,x,\mu_n\right).
\end{equation*}
By a straightforward computation, we see that $u_n$ converges uniformly to
\begin{equation*}
  u(t,x)= v\left(t,x,\mu\right),
\end{equation*}
which is the solution to PDE~\eqref{eq:HJB} corresponding to $\mu$. Additionally, $u_n$ satisfies
\begin{equation*}
 \partial_t u_n + \Delta u_n = f_n ~~\text{in }[0,T)\x\R^d,\quad u_n(T,\cdot) = g(\cdot,\mu_n(T)) ~~ \text{in }\R^d,
\end{equation*}
where 
\begin{equation*}
f_n:(t,x)\mapsto \frac{1}{2}|Du_n(t,x)|^2 + \gamma(x) u_n(t,x) - f(x,\mu_n(t)).
\end{equation*}
Since $f_n$ is continuous and uniformly bounded in $n$, Theorem~3.11.1 in Lady\v{z}enskaja \textit{et al.}~\cite{ladyzenskaja1968} ensures that $Du_n$ is locally H\" older continuous uniformly in $n$. Thus 
$(Du_n)_{n\in\N}$ is relatively compact by Arzel\`a-Ascoli theorem and so it converges locally uniformly to $Du$. We deduce that 
any limit point of $(m_n)_{n\in\N}$ is a weak solution to PDE~\eqref{eq:fokker-planck} with $b=-Du$. 
	The conclusion then follows by weak uniqueness.
\qed

\subsection{Approximate Nash Equilibrium}
\label{subsec:nash}

 In the spirit of Section~3.4 in~\cite{cardaliaguet2010}, we can show that the solution to the MFG with branching provides an $\eps$-Nash equilibrium for the corresponding $n$-player game when $n$ is large enough. To a certain extent, this result justifies the MFG formulation as an approximation of games involving a large number of players.

	Consider $n$ initial agents at position $(X^1_0, \cdots, X^n_0)$ 
	which consists in an i.i.d. family of random variables with distribution $m_0$.
	Given a control $\bar{\alpha} = (\alpha^k)_{k \in \K}$, we can construct the controlled branching diffusion process $(X^k_t)_{k\in K^n_t}$ as in Section~\ref{sec:branching-diffusion}, where each agent $k$ follows the dynamic
	\begin{equation*}
		dX^{k}_t = \alpha^k_t \,dt + \sqrt{2} \,dB^k_t,
	\end{equation*}
	while choosing the strategy $\alpha^k$ in order to minimize
	\begin{equation*}
		J^n_k (\bar{\alpha}) :=
		\E\Big[
		  	\int_{S_k}^{T_k} {\Big(f(X^{k}_s, \mu^{n,k}_s) 
		  	+
		  	\frac{1}{2} \big|\alpha^k_s\big|^2\Big)\, ds} + g(X^{k}_T, \mu^{n,k}_T) \mathbf{1}_{T_k= T}
		\Big],
	\end{equation*}
	where $\mu^{n,k}_s := \frac{1}{n} \sum_{k' \in K^n_s\setminus\{k\}} \delta_{X^{k'}_s}$.

	Let us also consider the branching diffusion process $(\hat{X}^k_t)_{k\in\hat{K}^n_t}$, where every agent applies the closed loop strategy given by the MFG with branching:
	\begin{equation*}
		d \hat{X}^{k}_t 
		=
		- Du (t, \hat{X}^{k}_t) \,dt + \sqrt{2} \,dB^k_t,
	\end{equation*}
    where $(u,m)$ satisfies~\eqref{eq:hjb}--\eqref{eq:bc}. The corresponding open loop control $\hat{\alpha}:=(\hat{\alpha}^k)_{k\in\K}$ is given by
    \begin{equation*}
     \hat{\alpha}^k_t := -Du (t, \hat{X}^{k}_t).
    \end{equation*}
    As stated below, it provides an approximate Nash equilibrium for large population games.

	\begin{Theorem}\label{thm:nash}
		Under Assumption \ref{ass:cardaliaguet}, for any $\eps > 0$, there exists $n_0\in\N$ such that for all $n \ge n_0$, 
		the symmetric strategy $\hat{\alpha}$ provides an $\eps$-Nash equilibrium in the sense that
		\begin{equation*}
			J^n_k \left(\hat{\alpha}\right) 
			\leq
			J^n_k \big(\alpha^k , \hat{\alpha}^{-k} \big) + \eps, \qquad \forall\,\alpha^k\in\Ac,\,k\in\K.
		\end{equation*}
	\end{Theorem}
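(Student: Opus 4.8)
The plan is to show that a single agent $k$ deviating from the symmetric MFG strategy $\hat{\alpha}$ cannot gain more than $\eps$ in cost, by exploiting that, for $n$ large, the empirical measure $\mu^{n,k}$ seen by the deviating agent stays close to the MFG environment measure $m$. The overall structure mirrors Section~3.4 of Cardaliaguet~\cite{cardaliaguet2010}, but with the branching mechanism one must control not a fixed number of particles but the whole (random, growing) population, so the propagation-of-chaos estimate has to be carried out at the level of the measure-valued process $\mu^n$.

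First I would establish a law-of-large-numbers / propagation-of-chaos estimate: when all $n$ initial agents use the symmetric closed-loop control $-Du(t,\cdot)$, the empirical measure $\mu^n_s = \frac1n\sum_{k\in K^n_s}\delta_{X^k_s}$ converges to the MFG measure $m(s)$, uniformly in $s$, in the sense that
\begin{equation*}
  \E\Big[\sup_{s\in[0,T]} W_1\big(\mu^n_s, m(s)\big)\Big] \xrightarrow[n\to\infty]{} 0.
\end{equation*}
Because each of the $n$ initial families $(X^k)_{k\succeq i}$ is i.i.d. across $i$ and, by Proposition~\ref{prop:def} (the moment bound~\eqref{eq:moment}), has uniformly bounded and integrable mass, this follows from a standard LLN for i.i.d. $\Mc_1(\R^d)$-valued summands together with the definition $\langle m(s),\varphi\rangle=\E[\sum_{k\in K^1_s}\varphi(X^k_s)]$ and the fact that $m$ solves the Fokker--Planck equation~\eqref{eq:fokker-planck} with $b=-Du$ (Proposition~\ref{prop:fk}). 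Since the deletion of one index changes $\mu^{n}$ by $O(N^n_s/n)$ in mass, the same estimate holds for $\mu^{n,k}$, with an error that vanishes in expectation by~\eqref{eq:moment}.

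Next I would quantify the effect of replacing $\mu^{n,k}$ by $m$ in the costs. Because $f$ and $g$ are Lipschitz in the measure argument w.r.t. $W_1$ (Assumption~\ref{ass:cardaliaguet}(i)), and using the uniform mass/moment bounds, one gets for every admissible $\bar\alpha$ that $J^n_k(\bar\alpha)$ differs from its ``frozen-environment'' analogue (the cost~\eqref{eq:def_v}--type functional computed with the fixed measure $m$) by at most $L\,\E[\sup_s W_1(\mu^{n,k}_s,m(s))]$ times a constant depending on the bounds in~\eqref{eq:moment}. Applying this to the equilibrium profile $\hat\alpha$ shows $J^n_k(\hat\alpha)$ is within $o(1)$ of $v(S_k,\hat X^k_{S_k},m)$ summed appropriately, and applying it to the deviated profile $(\alpha^k,\hat\alpha^{-k})$ shows that agent $k$'s cost there is at least $v(S_k,X^k_{S_k},m)-o(1)$, since the frozen problem is exactly the one $v$ solves optimally (Proposition~\ref{prop:hjb}). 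Subtracting the two and using that $\hat\alpha^k=-Dv(\cdot,m)$ is the optimizer of the frozen problem gives $J^n_k(\hat\alpha)\le J^n_k(\alpha^k,\hat\alpha^{-k})+\eps$ for $n$ large, uniformly over deviations $\alpha^k$.

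The main obstacle is the coupling step when agent $k$ deviates: the deviating control $\alpha^k$ not only changes agent $k$'s trajectory but, through the branching intensity $\gamma$ and the offspring law, changes the entire subtree of descendants of $k$ and hence perturbs $\mu^{n,k}$ seen by the other agents. I would handle this by noting that a single deviating subtree contributes mass $O(N^n_s/n)$ whose expectation is uniformly bounded by~\eqref{eq:moment}, so its effect on every other agent's empirical measure is $O(1/n)$ and vanishes; crucially, one must also argue that it suffices to consider deviations $\alpha^k$ satisfying a cost bound of the form~\eqref{eq:bounded_ctrl} (otherwise $\alpha^k\equiv 0$ already does better), which keeps the deviating trajectory and its progeny under control in $L^2$ uniformly in $n$. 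Assembling these pieces with the Lipschitz and moment bounds yields the uniform $\eps$-Nash property.
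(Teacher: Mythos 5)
Your proposal is correct and follows essentially the same route as the paper: compare each cost to its frozen-environment analogue computed with $m$, use the optimality of $\hat{\alpha}^k$ for that frozen problem (Proposition~\ref{prop:hjb}), and control the two remaining gaps via the Lipschitz dependence of $f,g$ on the measure together with a law-of-large-numbers estimate on $W_1(\mu^{n,k}_s,m_s)$ and the $O(1/n)$ effect of removing a single particle or subtree. One minor caveat: the uniform-in-time convergence $\E\big[\sup_{s}W_1(\mu^n_s,m(s))\big]\to 0$ you assert does not follow from a ``standard LLN'' alone and is stronger than needed --- the paper only establishes pointwise-in-$s$ almost sure convergence of $W_1(\hat{\nu}^n_s,m_s)$ and then applies dominated convergence to $\int_0^T\big(W_1\wedge 1\big)\,ds$, which is all your argument actually requires.
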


	\begin{proof}
	We break down the computation as follows:
	\begin{equation}\label{eq:nash-proof}
		J_k^n(\hat{\alpha}) - J^n_k \big(\alpha^k , \hat{\alpha}^{-k} \big) 
		\leq 
		\big( J_k (\hat{\alpha}^k) -  J_k (\alpha^k)\big) 
		+
		\big(J_k^n(\hat{\alpha}) - J_k (\hat{\alpha}^k)\big) 
		+ \big(J_k (\alpha^k) - J^n_k(\alpha^k , \hat{\alpha}^{-k})\big),
	\end{equation}
    	where 
	\begin{equation*}
		J_k (\alpha^k) :=
		\E\Big[
		  	\int_{S_k}^{T_k} {\Big(f(X^{k}_s, m_s) 
		  	+
		  	\frac{1}{2} \big|\alpha^k_s\big|^2\Big)\, ds} + g(X^{k}_T, m_T) \mathbf{1}_{T_k= T}
		\Big].
	\end{equation*}
	
	\noindent $\mathrm{(i)}$  The first term on the r.h.s. of~\eqref{eq:nash-proof} is non-positive. Indeed, it holds
	\begin{equation*}
	 J_k (\alpha^k) \geq \E\Big[v\big(S_k, X^k_{S_k},m\big)\Big] = J_k(\hat{\alpha}^k),
	\end{equation*}
	where the inequality follows from a dynamic programming principle and the equality from the optimality of $\hat{\alpha}^k$ established in Proposition~\ref{prop:hjb}.
	
    \noindent $\mathrm{(ii)}$ Next we deal with the second term on the r.h.s. of~\eqref{eq:nash-proof}.
    It holds that
    \begin{equation*}
      \big|J_k^n(\hat{\alpha}) - J_k (\hat{\alpha}^k)\big| \leq C \E\Big[ \int_0^T \big( W_1 (\hat{\mu}^{n,k}_s, m_s) \wedge 1 \big) \,ds + \big(W_1( \hat{\mu}^{n,k}_T, m_T) \wedge 1\big) \Big]
    \end{equation*}
	where $\hat{\mu}^{n,k}_s := \frac{1}{n} \sum_{k' \in \hat{K}^n_s\setminus\{k\}} \delta_{\hat{X}^{k'}_s}$ and $C=L \vee 2\|f\| \vee 2\|g\|$.
	We aim to show that the r.h.s. vanishes uniformly w.r.t. $k$ as $n$ goes to infinity.  
	We first observe that 
	\begin{equation}\label{eq:nash-subproof}
	  W_1 (\hat{\mu}^{n,k}_s, m_s) \leq W_1 (\hat{\mu}^{n,k}_s, \hat{\nu}^n_s) + W_1 (\hat{\nu}^n_s, m_s),
	\end{equation}
	where $ \hat{\nu}^n_s:=\frac{1}{n} \sum_{k' \in \hat{K}^n_s} \delta_{\hat{X}^{k'}_s}$. For the first term on the r.h.s. of~\eqref{eq:nash-subproof}, we use the duality result of Lemma~\ref{lem:duality}  to obtain that
	\begin{equation*}		
		W_1 (\hat{\mu}^{n,k}_s, \hat{\nu}^n_s)
		= \sup_{\varphi \in \mathrm{Lip}^0_1(\R^d)} \Big\{ \hat{\mu}^{n,k}_s(\varphi) - \hat{\nu}^n_s(\varphi)\Big\}
		+ \Big|\hat{\mu}^{n,k}_s(\R^d) - \hat{\nu}^n_s(\R^d)\Big|
	    \le \frac{1}{n} \Big(1 + \big|\hat{X}^{k}_s\big| \Big),
	\end{equation*}	
    where $\mathrm{Lip}^0_1(\R^d)$ stands for the collection of all functions $\varphi: \R^d \to \R$ with Lipschitz constant smaller or equal to $1$ and such that $\varphi(0) = 0$. 
	Hence, we deduce that
	\begin{equation*}
	 \E\Big[W_1 (\hat{\mu}^{n,k}_s, \hat{\nu}^n_s)\Big]\leq \frac{C}{n} \big(1 + \E\big[|X_0|\big]\big),
	\end{equation*}
	where $C$ depends solely on $T$ and $\|Du\|$.
	As for the second term on the r.h.s. of~\eqref{eq:nash-subproof}, it follows from the law of large number that for any $\varphi: \R^d \to \R$ continuous such that $|\varphi(x) | \le C(1+|x|)$,
	\begin{equation*}
		\hat{\nu}^{n}_s (\varphi)
		=
		\frac{1}{n} \sum_{i=1}^n \sum_{k' \in \hat{K}^{n,i}_s}  \varphi(\hat{X}^{k'}_s)
		\xrightarrow[n\to\infty]{a.s.}
		\E\bigg[\sum_{k' \in \hat{K}^1_s}  \varphi(\hat{X}^{k'}_s)\bigg] = m_s(\varphi),
	\end{equation*}
	where $\hat{K}^{n,i}_s:=\{k\in \hat{K}^n_s;\ k\succeq i\}$.
	In view of Lemma~\ref{lemm:Wp_App}, it is equivalent to
	\begin{equation*}
		W_1( \hat{\nu}^{n}_s, m_s) 
		\xrightarrow[n\to\infty]{a.s.} 0.
	\end{equation*}
	Thus we conclude by the dominated convergence theorem that
	\begin{equation*}
		\sup_{k\in\K} ~ \E \Big[ \int_0^T \big( W_1 (\hat{\mu}^{n,k}_s, m_s) \wedge 1 \big) ds + \big(W_1( \hat{\mu}^{n,k}_T, m_T) \wedge 1\big) \Big]
		\xrightarrow[n\to\infty]{} 0.
	\end{equation*}
	
	\noindent $\mathrm{(iii)}$ The third term on the r.h.s. of~\eqref{eq:nash-proof} can be treated exactly like the second one.\qed
	\end{proof}

\subsection{Numerical Example}
\label{subsec:numeric}

It is often difficult to solve a general MFG system, while it is possible to find explicit solutions in special cases, such as the linear-quadratic models. See, \eg, Bardi~\cite{Bar2012} and Carmona~\textit{et al.}~\cite{lachapelle2013}. In order to illustrate the behaviour of the equilibrium of the MFG with branching, we study the following simple linear-quadratic model:
\begin{eqnarray}
 \partial_t u + \Delta u - \frac{1}{2}\left|Du\right|^2 - \gamma u = 0, & &  \text{in }[0,T)\x\R, \label{eq:hjb-ex}\\
 \partial_t m - \Delta m - \mathrm{div}\left(m Du\right) -  \lambda x^2 m = 0, & & \text{in }(0,T]\x\R, \label{eq:fk-ex}\\
  u(T,\cdot) = g\left(\cdot,m(T)\right), ~ m(0) = m_0, & & \text{in }\R, \label{eq:bc-ex}
\end{eqnarray}
where $\gamma>0$, $\lambda\geq 0$, $\delta >0$, $x_0\in\R$ and 
\begin{equation*}
 g\left(x,\mu\right) := \frac{1}{2}\left(x-x_0\right)^2 +\frac{\d}{2}\Big(x-\frac{1}{\mu(\R)}\int_{\R} {y\, \mu(dy)}\Big)^2.
\end{equation*}
In other words, we take 
\begin{equation*}
 f(x,\mu)=0 \quad \text{and} \quad \sum_{\ell\in\N}{\ell p_\ell(x)} = 1 + \frac{\lambda}{\gamma} x^2.
\end{equation*}
Notice that we are not in the exact setting of the previous sections as $\sum_{\ell\in\N}{\ell p_\ell}$ is not bounded. This is not a problem as we can construct an explicit solution to the MFG above.

\begin{Remark}
\rm
(i) The terminal cost $g$ indicates that particles aim to reach the desired position $x_0$ while being close to the average position of all living particles.

\no (ii) The quadratic form of $\sum_{\ell\in\N}{\ell p_\ell}$ implies that particles further away from the origin generate more particles. In particular, it is position-dependent so the problem does not reduce to classical MFG by a simple change of variable as explained in Remark~\ref{rem:uniqueness_edp}.
\end{Remark}

\begin{Proposition}
Assume that $m_0$ is a Gaussian distribution $\cN(\rho_0,v_0)$. Then there exists an equilibrium for the MFG with branching~\eqref{eq:hjb-ex}--\eqref{eq:bc-ex} if the horizon is short enough.
\end{Proposition}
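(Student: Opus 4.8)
The plan is to exploit the linear--quadratic structure and look for an explicit solution in which $u$ is quadratic in $x$ and $m$ is a rescaled Gaussian measure. Concretely, I would search for
\[
 u(t,x) = a(t)\,x^2 + b(t)\,x + c(t), \qquad m(t,dx) = N(t)\,\cN\big(\rho(t),v(t)\big)(dx),
\]
where $\cN(\rho,v)$ is the Gaussian with mean $\rho$ and variance $v$ and $N(t)>0$ is the total mass. The terminal cost $g(x,\mu)=\tfrac12(x-x_0)^2+\tfrac\delta2\big(x-\bar\mu\big)^2$ depends on $\mu$ only through its \emph{normalized} mean $\bar\mu=\mu(\R)^{-1}\int y\,\mu(dy)$ and is quadratic in $x$, which makes the quadratic ansatz for $u$ compatible with \eqref{eq:bc-ex}; since the coefficients of \eqref{eq:hjb-ex}--\eqref{eq:fk-ex} are polynomial in $x$ of degree at most two, both the quadratic form of $u$ and the Gaussian form of $m$ are preserved by the flow.

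First I would substitute the ansatz for $u$ into \eqref{eq:hjb-ex} and match powers of $x$, reducing the HJB equation to the ODE system
\[
 \dot a = 2a^2+\gamma a,\qquad \dot b = (2a+\gamma)\,b,\qquad \dot c = \gamma c - 2a + \tfrac12 b^2,
\]
with terminal data read off from \eqref{eq:bc-ex}: $a(T)=\tfrac{1+\delta}{2}$, $b(T)=-\big(x_0+\delta\,\rho(T)\big)$, and $c(T)$ the constant term of $g(\cdot,m(T))$. The equation for $a$ is a scalar Riccati equation decoupled from the rest which, solved backward from its positive terminal value, stays positive and bounded on all of $[0,T]$. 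Substituting $Du=2a x+b$ into \eqref{eq:fk-ex} and matching powers of $x$ in the Gaussian ansatz yields a closed system for $(v,\rho,N)$, in which the variance solves a Riccati equation driven only by $a$,
\[
 \dot v = 2\lambda\,v^2 - 4a\,v + 2,\qquad v(0)=v_0,
\]
the mean solves a linear equation driven by $a,b,v$, and $N$ is recovered by a further quadrature. The key structural point is that the whole coupling collapses to the single scalar $\xi:=\rho(T)$: once $\xi$ is fixed, $a$ and $v$ are determined independently of $\xi$, $b$ is the linear solution with terminal value $-(x_0+\delta\xi)$, and $\rho$ is the resulting linear solution of its equation, so that $\rho(T)$ is an \emph{affine} function $\xi\mapsto P\,\xi+Q$ of $\xi$. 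An equilibrium exists precisely when this map admits a fixed point.

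The short--horizon hypothesis enters in two places, and controlling them is the crux of the argument. Since $\lambda>0$, the variance Riccati equation carries a quadratic forcing term and its solution may blow up (equivalently, the precision $1/(2v)$ may reach $0$) in finite time; for $T$ below this blow-up time the solution exists on $[0,T]$ and remains strictly positive, so that $m(t)=N(t)\,\cN(\rho(t),v(t))$ is a genuine element of $\cC\big([0,T],\Mc_1(\R)\big)$. Second, I would check that as $T\to0$ both $b$ and $\rho$ remain frozen at their endpoint data, so that the slope $P$ of the affine map tends to $0$; hence for $T$ small enough $|P|<1$, the map $\xi\mapsto P\xi+Q$ is a contraction, and a unique fixed point $\xi_\star=Q/(1-P)$ exists by the Banach fixed point theorem (equivalently, by solving the scalar affine equation). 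Feeding $\xi_\star$ back through the construction produces $(u,m)$ solving \eqref{eq:hjb-ex}--\eqref{eq:bc-ex}; a direct verification that $u\in\Cc^{1,2}$ and that $m$ satisfies the weak formulation then closes the proof. The main obstacle is thus the quantitative control of the variance Riccati equation on $[0,T]$ together with the contraction estimate for the scalar fixed point — both of which are exactly what the short-horizon assumption provides.
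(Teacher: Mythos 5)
Your proposal is correct and follows essentially the same route as the paper: a quadratic ansatz for $u$ reducing the HJB equation to decoupled Riccati/linear ODEs, a Gaussian ansatz for $m$ whose variance solves a Riccati equation (well-posed on short horizons by Picard--Lindel\"of), and the observation that the forward--backward coupling collapses to a scalar affine fixed-point equation for the terminal mean $\rho(T)$. The only cosmetic differences are that the paper writes the closed-form solutions for the coefficients and solves the affine equation explicitly (requiring $\delta\theta\neq 1$, which your contraction estimate $|P|<1$ for small $T$ guarantees), and it normalizes $m$ rather than tracking the total mass $N(t)$.
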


\begin{proof}
Given an environment measure $\mu=(\mu_t)_{t\in[0,T]}$, we can solve the HJB equation using the standard argument for the linear-quadratic model so that
\begin{equation*}
 Du(t,x) = a_t x + b_t,
\end{equation*}
where 
\begin{equation}\label{eq:lq-a} 
	a_t  :=  \frac{\g\left(1+\d\right)}{\left(1+\d+\g\right)e^{\g (T-t)} - \left(1+\d\right)}, 
 	~~~~
	b_t := -\left(x_0 + \d\int_{\R} {y \,\mu_T(dy)}\right)e^{-\int_t^T {\left(a_s+\g\right) ds}}. 
\end{equation}
Notice that the solution to the HJB equation is coupled with the solution to the Fokker--Plank equation only through the term $\int_\R {y \,\mu_T(dy)}$ which appears in the coefficient~$b$.

Next we study the Fokker--Plank equation. We observe that the normalized density $\bar m := \frac{m}{m(\R)}$ formally satisfies 
\bea\label{ex-barm}
 \partial_t \bar m - \Delta \bar m - \mathrm{div}(\bar m Du) + \l\left(\int_\R  y^2 \,\bar m(dy) - x^2\right) \bar m = 0, & \text{in }(0,T]\x\R.
\eea
We claim that there is a solution of the form $t\mapsto \bar m(t)$ where $\bar m(t)$ is a Gaussian distribution $\Nc(\rho_t,v_t)$.
A straightforward computation yields that PDE~\eqref{ex-barm} has a Gaussian solution if and only if the following system admits a solution:
\bea\label{riccatisys}
\begin{cases}
\dot v = 2\l v^2 -2 a v + 2, \\
\dot \rho = (2\l v-a)\rho - b,
\end{cases}
\q (v_0,\rho_0)~~\mbox{given.}
\eea
The Picard--Lindel\"of theorem ensures that the Riccati equation satisfied by $v$ has a unique solution if the horizon is short enough. Once $v$ is given, we can solve the linear equation satisfied by $\rho$ so that
\bea\label{solmu}
\rho_t =  \rho_0 e^{\int_0^t {(2\l v_s - a_s)\,ds}} - \int_0^t  {e^{\int_s^t {(2\l v_r -a_r)\,dr}} b_s \,ds}.
\eea

Based on the analysis above, we may construct a solution to the MFG as follows. First, $a$ and $v$ can be calculated through~\eqref{eq:lq-a} and \eqref{riccatisys} as they only depend on known coefficients. Then, using the expression of $b$ in~\eqref{eq:lq-a}, we can solve the fixed point condition and determine $\rho_T$ through~\eqref{solmu} so that
\bea\label{muT}
\rho_T = \frac{\rho_0 \hat\theta + x_0 \theta }{1-\d \theta},\quad\text{where }
\theta := \int_0^T {e^{\int_t^T {(2\l v_s - 2 a_s - \g)\,ds}} \,dt},
\ \hat\theta := e^{\int_0^T {(2\l v_t -a_t)\,dt}}.
\eea
Finally, we obtain $b$ and $\rho$ through \eqref{eq:lq-a} and \eqref{solmu}.
\qed
\end{proof}

From the previous proof, it turns out that a solution to the MFG with branching~\eqref{eq:hjb-ex}--\eqref{eq:bc-ex} can be calculated numerically. Let us present the result of a numerical test where $T=1,$ $\gamma = 0.2,$ $\delta =0.5,$ $\rho_0=0,$ $v_0=1$ and $x_0 = 5$. We shall focus on the effect of branching on the MFG equilibrium, which is illustrated by changing the value of the parameter $\l$. Note that if $\l$ becomes too large, the solution to the Riccati equation in~\eqref{riccatisys} explodes. 

\begin{figure}[h]
\centering
\begin{tabular}{|c|c|}
\hline
\subf{\includegraphics[width=60mm]{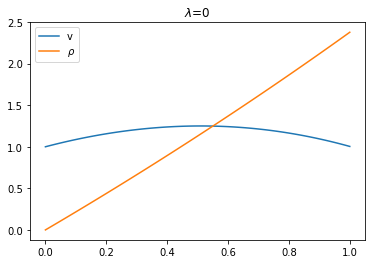}}
     {}
&
\subf{\includegraphics[width=60mm]{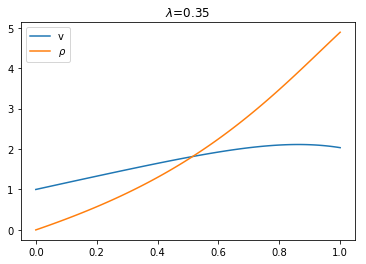}}
     {}
\\
\hline
\subf{\includegraphics[width=60mm]{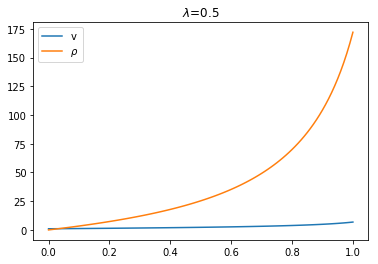}}
     {}
&
\subf{\includegraphics[width=60mm]{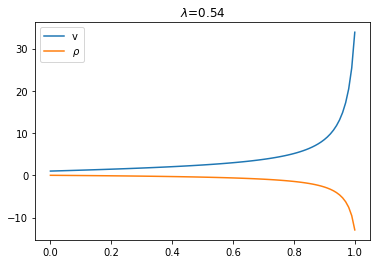}}
     {}
\\
\hline
\end{tabular}
\caption{The change of the equilibriums for different $\l.$}\label{fig:ex}
\end{figure}

In view of Figure~\ref{fig:ex}, when there is no branching, \ie, $\l=0$, the mean $\rho$ approaches the desired position $x_0=5$ almost linearly, and the variance $v$ is controlled to be small when the time approaches the maturity. Once the branching appears, the mean moves more quickly towards $x_0$ as illustrated in the case $\lambda=0.35.$ As we increase further the parameter $\lambda$,  it eventually crosses the desired position and keeps growing larger as particles further away from the origin generate more particles.  Meanwhile the variance $v$ becomes slightly larger but remains well-controlled. A singularity appears for $\l$ close to $0.5$ as $\rho_T$ becomes infinite since $\d\theta =1$ in \eqref{muT}. The unexpected case happens right after the singularity when $\d\theta >1$ in \eqref{muT}. As a result, the mean $\rho$ moves in the negative direction, moving away from the desired position $x_0$ instead of approaching it. Meanwhile the variance grows larger and larger, and eventually explodes for $\l$ close to $0.54.$

\section{Probabilistic Approach}
\label{sec:probabilistic}

	In this section, we provide a weak probabilistic formulation of the MFG with branching described in Section~\ref{subsec:MFG_descrip}. We follow the inspiration of Lacker~\cite{lacker2015} for classical MFG and use the relaxed formulation of stochastic control problem introduced by El Karoui~\textit{et al.}~\cite{elkaroui1987}. 
	First we focus on the relaxed control problem for diffusions solved by each agent in Section~\ref{sec:agent-problem}. 
	Then we formulate and study the corresponding relaxed control problem for branching diffusions in Section~\ref{sec:system-problem}. 
	This allows us to define a relaxed MFG with branching in Section~\ref{sec:bmfg} and to ensure existence of solutions in Theorem~\ref{thm:main_result}.
	Some technical proofs are completed in Section~\ref{sec:proof}.

\subsection{Relaxed Control of Diffusions}
\label{sec:agent-problem}

	Following~\cite{elkaroui1987}, we introduce a relaxed control problem for diffusion processes which corresponds to the optimization problem solved by each agent in the game.
	The main idea is to consider a controlled martingale problem on an appropriate canonical space corresponding to the pair formed by the control and the diffusion.
	
 Let us introduce first the canonical space $\Om := \cV\x \Cc \x \Qc$ where
	\begin{itemize}
		\item $\cV$ is the space of measures $\lambda$ on $[0,T] \x A$ with first marginal corresponding to the Lebesgue measure, equipped with the weak topology;
		\item $\Cc$ is the space of continuous maps $x:[0,T]\to\R^d$, equipped with the uniform topology;
		\item $\Qc$ is the space of locally finite integer--valued  measures $q$ on $[0,T] \x \R_+$, equipped with the vague topology.
	\end{itemize}
	Denote by $\Lambda$, $X$ and $Q$ the canonical projection from $\Om$ onto $\cV$, $\Cc$ and $\Qc$ respectively. Then we define the canonical filtration  $\F = (\Fc_s)_{s\in [0,T]}$ as
	\begin{equation*}
		\Fc_s := \sigma \big(\1_{[0,s]} \Lambda, X_{s\wedge\cdot}, \1_{[0,s]} Q \big).
	\end{equation*}
	We can further define a $\F$-predictable process $(\Lambda_s)_{s\in [0,T]}$ valued in $\Pc(A)$ such that $\Lambda(ds, da) = \Lambda_s(da)\, ds$, see~\cite[Lemma 3.2]{lacker2015}.
	
	Next we introduce the notion of relaxed control as solution to a controlled martingale problem.
    Recall that the operator $\Lc^{\mu, a}_s$ in~\eqref{eq:Lc} is defined as
    \begin{equation*}
   \Lc^{\mu,a}_s {\varphi}(x) = \frac{1}{2}\mathrm{tr}\left(\sigma\sigma^*\left(s,x,\mu_s,a\right) D^2{\varphi}\left(x\right)\right)  + b\left(s,x,\mu_s,a\right) \cdot D{\varphi}\left(x\right).
\end{equation*}

 \begin{Definition} \label{def:single_relax_ctrl}
  	Given $(t,x) \in [0,T] \x \R^d$ and $\mu=(\mu_t)_{t\in[0,T]}, \mu_t\in\Mc(\R^d),$ an element of $\Rc(t,x,\mu)$ is a probability measure $\P$ on $\Om$ such that
  	\begin{itemize}
  	 \item[\rm(i)] $\P(X_s =x,\, s\leq t) = 1$;
  	 
  	 \item[\rm(ii)] for all $\varphi \in C^2_b(\R^d)$, the process 
		\begin{equation*}
			M^{t,\mu,\varphi}_s
			:= 
			\varphi(X_s) - \int_t^s \!\! \int_A 
			\Lc_r^{\mu,a}\varphi(X_r)
			\,\Lambda(dr,da),
		\end{equation*}
	  	is a $(\P, \F)$--martingale
  		on $[t,T]$;

		\item[\rm(iii)] $Q(ds,dz)$ is a $(\P, \F)$--Poisson random measure with intensity $\mathbf{1}_{(t,T]}(s)\,ds\, dz.$
  	\end{itemize}
 \end{Definition}
 
	\begin{Remark}	
	\rm
	  Let $\P \in \Rc(t,x,\mu)$ be such that $\Lambda(ds, da) = ds\, \delta_{\alpha_s}(da)$ for some predictable process $\alpha$.
		Then there exists a Brownian motion $B$ on a possibly enlarged space such that $X$ satisfies
		\begin{equation*}
			dX_s = b(s, X_s, \mu_s, \alpha_s) \,ds + \sigma(s, X_s, \mu_s, \alpha_s) \,dB_s\quad
			\text{on }[t,T].
		\end{equation*}
		In particular, the notion of relaxed control generalizes the classical notion of control process as  
		the disintegration $\lambda(ds,da)=ds\, \lambda_s(da)$ may not be supported on the Dirac measures.
	\end{Remark} 
	
   Let us now consider the relaxed control problem corresponding to the following value function:
    \begin{equation}\label{eq:valuefunction}
		v(t, x, \mu) 
		:= 
		\inf_{\P \in \Rc(t,x,\mu)}
		J(t, x, \mu, \P),
	\end{equation}
	with the cost function
	\begin{equation*}
		J(t, x, \mu, \P)
		:=
		\E^{\P}
		\Big[
			\int_t^{\tau}\!\!\! \int_A f(s, X_s, \mu_s, a) \,\Lambda(ds,da)+ g(X_T,\mu_T) \1_{\tau = T}
		\Big],
	\end{equation*}
	and the stopping time
	\begin{equation*}
		\tau
		:=
		\inf \Big\{ 
			s > 0;\  Q \Big(\{s\} \x \Big[0, \int_A \gamma(s, X_s, \mu_s, a) \Lambda_s(da) \Big] \Big)= 1
		\Big\}\wedge T.
	\end{equation*}  			
	Then we define $\Rc^*(t,x, \mu)$ the collection of all optimal relaxed controls as
	\begin{equation*}
	 \Rc^*(t,x,\mu) := \big\{ \P \in \Rc(t,x,\mu);\ J(t,x,\mu,\P) = v(t,x,\mu) \big\}.
	\end{equation*}

	One of the main advantages of the relaxed control formulation is that it allows us to derive existence of optimal controls under general conditions.
	The next proposition supports this statement.

	\begin{Assumption} \label{assum:RelaxedCtrl}
      $b$, $\sigma$, $\gamma$, $f$ and $g$ are bounded, continuous w.r.t. $(x,\mu,a)\in\R^d\x \Mc(\R^d)\x A$.
    \end{Assumption}
		
	\begin{Proposition} \label{prop:non-empty-diffusion}
		Under Assumption~\ref{assum:RelaxedCtrl}, the set $\Rc^*(t,x,\mu)$ is nonempty. 
	\end{Proposition}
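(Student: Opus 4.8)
The plan is to establish nonemptiness of $\Rc^*(t,x,\mu)$ by the standard compactness--and--continuity argument for relaxed control problems, adapted to the branching setup through the Poisson random measure component. Concretely, I would take a minimizing sequence $(\P_n)_{n\in\N}\subset\Rc(t,x,\mu)$ with $J(t,x,\mu,\P_n)\to v(t,x,\mu)$, show that $(\P_n)$ is relatively compact in the weak topology on $\Pc(\Om)$, extract a convergent subsequence $\P_n\Rightarrow\P^*$, and then verify two things: that the limit $\P^*$ still lies in $\Rc(t,x,\mu)$, and that the cost functional is lower semicontinuous (in fact continuous) along the sequence so that $J(t,x,\mu,\P^*)=v(t,x,\mu)$.

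For tightness I would treat the three marginals separately. On the $\cV$-component, since $A$ is compact (as assumed throughout the paper) and the first marginal of every $\Lambda$ is Lebesgue measure on $[0,T]$, the set of admissible $\Lambda$ is itself compact in the weak topology, so this marginal needs no further work. On the $\Cc$-component I would use the martingale problem characterization in Definition~\ref{def:single_relax_ctrl}(ii) together with the boundedness of $b$ and $\sigma$ (Assumption~\ref{assum:RelaxedCtrl}) to derive uniform moment bounds on the increments of $X$, yielding Aldous-type tightness via a Kolmogorov criterion. On the $\Qc$-component, each $Q$ is a Poisson random measure with the \emph{fixed} deterministic intensity $\1_{(t,T]}(s)\,ds\,dz$, so the law of $Q$ is the same under every $\P_n$; its relative compactness is immediate. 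Combining the three gives tightness of $(\P_n)$ on the product space $\Om$, hence a weakly convergent subsequence by Prokhorov's theorem.

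The main obstacle, as usual in these arguments, will be showing that the limit $\P^*$ inherits the defining properties of $\Rc(t,x,\mu)$, and in particular the martingale property (ii). The difficulty is that $M^{t,\mu,\varphi}_s$ is not a bounded continuous functional of $(\Lambda,X,Q)$ in an obvious way, and passing the martingale property through a weak limit requires care: I would write, for $s'>s$ and a bounded continuous $\Fc_s$-measurable test functional $h$, the identity $\E^{\P_n}[(M^{t,\mu,\varphi}_{s'}-M^{t,\mu,\varphi}_s)h]=0$ and pass to the limit. The integrand is continuous and bounded in the variables thanks to the continuity of $b,\sigma$ in $(x,\mu,a)$ (Assumption~\ref{assum:RelaxedCtrl}) and the fact that $\Lc^{\mu,a}_r\varphi(x)$ is bounded continuous for $\varphi\in C^2_b(\R^d)$; the only subtlety is the joint continuity of the map $(\Lambda,X)\mapsto\int_t^{s}\int_A\Lc^{\mu,a}_r\varphi(X_r)\,\Lambda(dr,da)$, which follows because the integrand is bounded and continuous and $\Lambda$ converges weakly while $X$ converges uniformly. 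The preservation of the Poisson property (iii) under the limit is standard once one notes the intensity is deterministic and fixed.

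Finally, for the optimality of $\P^*$ I would argue that the cost $J(t,x,\mu,\cdot)$ is continuous along the converging sequence. The running-cost term $\int_t^\tau\int_A f\,\Lambda(ds,da)$ and the terminal term $g(X_T,\mu_T)\1_{\tau=T}$ both involve the default time $\tau$, which depends on $Q$, on $X$, and on $\Lambda$ through the intensity $\int_A\gamma(s,X_s,\mu_s,a)\Lambda_s(da)$; the delicate point is that $\tau$ is only lower semicontinuous in general, so I would establish that, under the limiting law, the relevant boundary events (e.g.\ an atom of $Q$ landing exactly on the graph of the intensity) have probability zero, making $\tau$ continuous $\P^*$-a.s. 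This, combined with the boundedness and continuity of $f$ and $g$ from Assumption~\ref{assum:RelaxedCtrl}, yields $J(t,x,\mu,\P^*)=\lim_n J(t,x,\mu,\P_n)=v(t,x,\mu)$, so that $\P^*\in\Rc^*(t,x,\mu)$ and the set is nonempty.
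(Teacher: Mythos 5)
Your proposal is correct and takes essentially the same route as the paper, which simply invokes compactness of $\Rc(t,x,\mu)$ together with continuity of the cost functional and refers to El Karoui \emph{et al.} and Lacker for the details; in particular, you isolate the one genuinely new point in this setting, namely that $\tau$ is continuous off the event where an atom of $Q$ lands exactly on the graph of the intensity $\int_A \gamma(s,X_s,\mu_s,a)\Lambda_s(da)$, an event of probability zero under any admissible law --- which is precisely the content of the paper's footnote.
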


	 The existence of an optimal relaxed control follows from the compactness of $\Rc(t,x,\mu)$ and the continuity of the cost function w.r.t. $\P\in\Rc(t,x,\mu)$.
	We refer to \cite{elkaroui1987, lacker2015} for a detailed proof in a slightly different context.\footnote{The only difficulty in our setting comes from the stopping time $\tau$ which might impair at first sight the continuity of the cost function.
	Nevertheless, $\tau$ is continuous on $\Om$ except on the set
	\begin{equation*}
		 \Om_0 := \Big\{
			\om = (\lambda, x, q) \in \Om;\
			\exists s \in [0,T],\ 
			q \Big( \{s\} \x \Big\{\int_A \gamma(s, x_s, \mu_s, a) \lambda_s(da) \Big\}\Big) \geq 1
		\Big\},
	\end{equation*}
	and $\P(\Om_0) = 0$ whenever $Q$ is $(\P,\F)$--Poisson random measure with intensity $\mathbf{1}_{(t,T]}(s)\, ds\, dz.$}

	\begin{Remark}
	\label{rem:Markov_Selection}
	\rm
	If we assume further that for all $(s,x,\mu)\in [0,T]\x\R^d\x\Mc(\R^d),$
	\begin{equation*}
	 \big\{\big(b(s,x,\mu,a),\sigma\sigma^*(s,x,\mu,a),\gamma(s,x,\mu,a),z\big);\ a\in A,\, z\leq f(s,x,\mu,a)\big\}
	\end{equation*}
	is convex,
	then it is well-known that there exists an optimal strict Markov control, \ie,   an element $\P \in \Rc^*(t,x,\mu)$ such that $\Lambda(ds, da) = ds \,\delta_{\alpha(s,X_s)}(da)$ for some map $\alpha: [0,T] \x \R^d \to A$. 
	We refer to~\cite{elkaroui1987,lacker2015} for more details.
	\end{Remark}

\subsection{Relaxed Control of Branching Diffusions}
\label{sec:system-problem}
  By extending the ideas of~\cite{elkaroui1987}, we can formulate a relaxed control problem for branching diffusion processes where every agent aims at solving the problem of Section~\ref{sec:agent-problem}. 
  Although the appropriate formulation readily follows from Proposition~\ref{prop:dsm}, the problem of existence of optimal solutions  raises significant difficulties as the optimization criterion is rather peculiar, see Remark~\ref{rem:difficulty}.

	Recall that  $\bar{A}$ is the collection of all sequences $\bar{a}=(a^k)_{k\in\K}, a^k\in A.$ Let us introduce first the canonical space $\bar{\Om}:=\bar{\cV}\x \cD$ where
	\begin{itemize}
		\item $\bar{\Vc}$ is the space of measures $\bar{\lambda}$ on $[0,T]\x\bar{A}$ with first marginal corresponding to the Lebesgue measure and such that
		\begin{equation*}
			 \bar{\lambda}(dt,d\bar{a})=dt\, \bigotimes_{k\in\K} \lambda^k_t(da^k),
		 \end{equation*}
		where $\lambda^k(dt, da)=dt\,\lambda^k_t(da)$ is the pushforward of $\bar{\lambda}(dt,d\bar{a})$ by $\pi^k:(t,\bar{a})\mapsto (t,a^k)$, endowed with the weak topology; 
 
		\item $\cD$ is the space of all c\`{a}dl\`{a}g paths $z:[0,T]\to E$, endowed with the Skorokhod topology.
	\end{itemize}
	Let $\bar{\Lambda}$ and $Z$ be the canonical projection from $\bar{\Om}$ onto $\bar{\Vc}$ and $\Dc$ respectively. Then the canonical filtration $\bar{\F} = (\bar{\Fc}_t)_{t\in [0,T]}$ is given by
	\begin{equation*}
		\bar{\cF}_t := \sigma\left(\mathbf{1}_{[0,t]}\bar{\Lambda}, Z_{t\wedge\cdot}\right).
	\end{equation*}
	
	We also introduce the time of birth $S_k,$ the time of death $T_k,$ the position $X^k$ and the control $\Lambda^k$ of particle $k\in\K$ as follows:
	\begin{align*}
		 & S_k := \inf{\left\{t\geq 0;\, \langle Z_t, \mathbf{1}_{\{k\}}\rangle = 1\right\}}\wedge T,& & T_k := \inf{\left\{t> S_k;\, \langle Z_t, \mathbf{1}_{\{k\}}\rangle = 0\right\}}\wedge T,\\
		& X^k_t := \langle Z_t, \mathbf{1}_{\{k\}} \mathrm{Id}_{\R^d}\rangle \quad \forall\, t\in[S_k,T_k], & & \Lambda^k := \bar{\Lambda}\circ(\pi^k)^{-1}.
	\end{align*}
	Then the set $K_t$ of all particles alive at time $t\in[0,T]$ and the size $N_t$ of the population are given by
	\begin{equation*}\label{def:NK}
		K_t := \left\{k\in\K;\, \langle Z_t, \mathbf{1}_{\{k\}}\rangle = 1\right\},\quad N_t := \langle Z_t, \mathbf{1}\rangle = \# K_t.
	\end{equation*} 	
	 
    Next we introduce the notion of relaxed control for branching diffusion processes as solution to a controlled martingale problem deriving from Proposition~\ref{prop:dsm}. 
    Recall that the operator $\Hc^{\bar{a}}$ in~\eqref{eq:bd-generator} is defined as
\begin{multline*}
 \Hc^{\mu,\bar{a}}_s {\Phi_{\bar{\varphi}}} \left(e\right) := \frac{1}{2} \Phi_{\bar{\varphi}}''\left(e\right) \sum_{k\in K} {\big|D {\varphi^k} (x^k) \sigma(s,x^k, \mu_s, a^k) \big|^2} + \Phi_{\bar{\varphi}}'\left(e\right) \sum_{k\in K} {\Lc^{\mu,a^k}_s {\varphi^k}(x^k)} \\
 + \sum_{k\in K} { \gamma(s,x^k, \mu_s, a^k) \left(\sum_{\ell\in\N} { \Phi_{\bar{\varphi}}\Big(e - \delta_{(k,x^k)}  +\sum_{i=1}^{\ell} \delta_{(ki,x^k)} \Big) p_\ell(s,x^k,\mu_s)  - \Phi_{\bar{\varphi}}\left(e\right) } \right) }.
\end{multline*}
	
\begin{Definition} \label{def:tree}
	Given $\mu=(\mu_t)_{t\in[0,T]}, \mu_t\in\Mc(\R^d),$ an element of $\Tc(\mu)$ is a probability measure $\Pb$ on $\bar{\Om}$ such that
\begin{itemize}
 \item[\rm(i)] $\Pb\circ Z_0^{-1} = m_0 \circ \pi^{-1}$ where $\pi:x \in \R^d\mapsto \delta_{(1,x)}\in E$;
 
 \item[\rm(ii)] for all $\Phi\in\cC^2_b(\R)$, $\bar{\varphi}\in\cC^2_b(\K\x\R^d)$, the process
\begin{equation*}
 M_t^{\mu,\Phi_{\bar{\varphi}}} := \Phi_{\bar{\varphi}}\left(Z_t\right) - \int_0^t \!\! \int_{\bar{A}} {\cH^{\mu,\bar{a}}_s {\Phi_{\bar{\varphi}}}\left(Z_s\right) \,\bar{\Lambda}(ds, d\bar{a})},
\end{equation*}
 is a $(\Pb,\bar{\F})$--martingale on $[0,T].$

\end{itemize}

\end{Definition}

\begin{Remark} \label{rem:MartingalePb}
	\rm The classical semimartingale theory provides an equivalent formulation of Condition~(ii) above, namely, for all $\bar{\varphi} \in C^2_b(\K\x\R^d),$
	the process $Y_t = \sum_{k \in K_t} \varphi^k(X^k_t)$ is a $\Pb$--semimartingale with characteristics $(\bar{B}, \bar{C}, \bar{\nu})$ given by
	\begin{equation*}
	 \bar{B}_t=\sum_{k\in K_t} {B^k_t}, \quad \bar{C}_t=\sum_{k\in K_t} {C^k_t}, \quad \bar{\nu}(dt, dy) =\sum_{k\in K_t} {\nu^k(dt, dy)},
	\end{equation*}
	where
	\begin{align*}
		dB^k_t &:= \int_A {\Lc^{\mu,a}_t \varphi^k(X^k_t) \,\Lambda^k(dt, da)} + \int_{|y| \le 1} {y \,\nu^k(dy, dt)}, \\
		dC^k_t &:= \int_A {\big|D\varphi^k(X^k_t) \sigma(t,X_t^k,\mu_t,a)\big|^2 \,\Lambda^k(dt, da)} \\
		\nu^k(dt, dy) &:= \int_A \gamma(t,X^k_t, \mu_t, a) \sum_{\ell\in\N} p_{\ell}(t,X^k_t,\mu_t) \,\delta_{\big\{\sum_{i=1}^{\ell} \varphi^{ki}(X^k_t) - \varphi^k(X^k_t)\big\}}(dy) \Lambda^k(dt, da).
	\end{align*}
	See, \eg, Jacod and Shiryaev~\cite[Theorem II.2.42]{jacod03}.
\end{Remark}	

	The set of optimal relaxed controls $\cT^*(\mu)$ is then defined as
	\begin{equation} \label{eq:def_T_star}
		\Tc^*(\mu) := \Big\{ \Pb \in \Tc(\mu);\ J_k\left(\mu, \Pb\right) = \E^{\Pb} \big[v\big(S_{k}, X^k_{S_k},\mu\big) \big],\ \forall\,k \in \K \Big\},
	\end{equation}
	with the value function $v$ defined in~\eqref{eq:valuefunction} and the cost function for particle $k$ given as
	\begin{equation*}
		J_k\left(\mu, \Pb\right) := \E^{\Pb}\Big[\int_{S_k}^{T_k} \!\! \int_A {f\big(s, X^k_s, \mu_s, a\big) \,\Lambda^k(ds,da)} + g\big(X^k_T,\mu_T\big) \mathbf{1}_{T_k= T}\Big].
	\end{equation*}
    This corresponds to a branching diffusion process where every agent minimizes its own cost criterion as described in Section~\ref{subsec:MFG_descrip}. 
    
   As expected for a relaxed formulation, we can ensure  existence of optimal controls under rather general conditions. However the proof is fairly delicate in this setting and we postpone it to Section~\ref{sec:proof-empty}.

  \begin{Assumption}\label{ass:relaxed-branching}   
  \rmi  There exists $C>0$ such that for all $t\in [0,T]$, $x,y\in\R^d$, $\mu\in\Mc(\R^d)$, $a\in A$,
		 \begin{equation*}
			\big| b(t,x,\mu,a) - b(t,y,\mu,a) \big| + \big| \sigma(t,x,\mu,a) - \sigma(t,y,\mu,a) \big| \leq C \left| x - y \right|.
		\end{equation*}  
 \rmii $(p_\ell)_{\ell\in\N}$ and $\sum_{\ell\in\N} {\ell p_\ell}$ are continuous w.r.t. $(x,\mu)\in\R^d\x\Mc(\R^d)$ and $\sum_{\ell\in\N} {\ell^2 p_\ell}$ is bounded.
  \end{Assumption}
  
	\begin{Proposition} \label{prop:non-empty-system}
		Let Assumptions~\ref{assum:RelaxedCtrl} and \ref{ass:relaxed-branching} hold. Then
		the set $\Tc^*(\mu)$ is nonempty.
	\end{Proposition}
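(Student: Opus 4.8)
The plan is to construct an element of $\Tc^*(\mu)$ explicitly, by grafting optimal single-agent controls onto the genealogical tree, rather than by minimizing a single global functional. Since the optimality condition in~\eqref{eq:def_T_star} is imposed separately on every particle and is not the minimizer of a scalar criterion, a minimizing-sequence-plus-weak-limit argument is unavailable; instead the branching process must be assembled so that each particle is individually optimal from its birth onward. I would first obtain a measurable selection of optimal single-agent controls. By Proposition~\ref{prop:non-empty-diffusion}, $\Rc^*(t,x,\mu)$ is nonempty for every $(t,x)$, and the compactness of $\Rc(t,x,\mu)$ together with the continuity of the cost functional makes $v(\cdot,\cdot,\mu)$ continuous and renders the correspondence $(t,x)\mapsto\Rc^*(t,x,\mu)$ a closed-graph map with nonempty compact values. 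A measurable selection theorem, as used in~\cite{elkaroui1987,lacker2015}, then yields a measurable family $(t,x)\mapsto\P^{t,x}\in\Rc^*(t,x,\mu)$.

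Next I would build $\Pb$ on $\bar{\Om}$ generation by generation along the Ulam--Harris--Neveu labels. The root particle starts from a position $\xi\sim m_0$ and evolves, up to its death time, according to $\P^{0,\xi}$; whenever a particle $k$ dies at a time $T_k<T$ and position $X^k_{T_k}$, it is replaced by $\ell$ children $k1,\ldots,k\ell$ with probability $p_\ell(T_k,X^k_{T_k},\mu_{T_k})$, each child being launched at $(T_k,X^k_{T_k})$ according to an independent copy of $\P^{T_k,X^k_{T_k}}$. The independence across particles reproduces the product form $\bar{\Lambda}(dt,d\bar a)=dt\bigotimes_{k}\lambda^k_t(da^k)$ required of $\bar{\Vc}$, while the boundedness of $\gamma$ and of $\sum_\ell\ell^2 p_\ell$ from Assumption~\ref{ass:relaxed-branching}(ii) prevents explosion on $[0,T]$, exactly as in Proposition~\ref{prop:def}. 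Hence the recursion defines a genuine probability measure $\Pb$ concentrated on $\cD$.

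It remains to verify that $\Pb\in\Tc(\mu)$ and that $\Pb\in\Tc^*(\mu)$. For the former I would show that $M^{\mu,\Phi_{\bar\varphi}}$ from Definition~\ref{def:tree} is a $(\Pb,\bar{\F})$--martingale by conditioning on the genealogy: between successive branching times each living particle $k$, restarted at $(S_k,X^k_{S_k})$, satisfies the single-agent martingale problem of Definition~\ref{def:single_relax_ctrl} inherited from $\P^{S_k,X^k_{S_k}}$, which delivers the diffusion part $\Lc^{\mu,a^k}$, while the Poisson death mechanism of intensity $\gamma$ together with the offspring rule $p_\ell$ contributes exactly the branching term of $\Hc^{\mu,\bar a}$. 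Summing these contributions over the finitely many living particles, in the spirit of the It\^o decomposition behind Proposition~\ref{prop:dsm}, recovers the full generator and hence the martingale property. For the optimality, conditionally on $(S_k,X^k_{S_k})$ the law of particle $k$ coincides with that of an optimal control in $\Rc^*(S_k,X^k_{S_k},\mu)$; since $J_k$ depends only on the trajectory and control of particle $k$ against the fixed environment $\mu$, the tower property yields $J_k(\mu,\Pb)=\E^{\Pb}[v(S_k,X^k_{S_k},\mu)]$ for every $k$, so that $\Pb\in\Tc^*(\mu)$.

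The main obstacle is the martingale verification in the third step. The controls furnished by $\Rc^*$ are measure-valued and merely predictable (not Markovian, absent the convexity hypothesis of Remark~\ref{rem:Markov_Selection}), so grafting them at the random branching times has to be carried out while preserving both $\bar{\F}$-adaptedness and the tensor structure of $\bar{\Lambda}$. In addition, matching the aggregated compensator with the branching term of $\Hc^{\mu,\bar a}$, with the correct intensity $\gamma$ and offspring law $p_\ell$, requires careful bookkeeping of the competing death times of all living particles and a summability control over generations, which is precisely where the second-moment bound $\sum_\ell\ell^2 p_\ell<\infty$ of Assumption~\ref{ass:relaxed-branching}(ii) enters.
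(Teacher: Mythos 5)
Your overall strategy is the right one and matches the paper's in spirit: since the optimality constraint is imposed particle by particle rather than through a single global functional, one must \emph{build} an optimal tree by grafting elements of $\Rc^*(t,x,\mu)$ onto the genealogy via a measurable selection, and then verify the martingale problem and the optimality identity by conditioning at birth times (the latter step is essentially Lemma~\ref{lem:optimal_controlled_branching}). However, there is a genuine gap in the way you assemble the tree. Your construction proceeds forward, generation by generation, and therefore requires countably many concatenations of relaxed controls at the random death times; you never explain why this infinite concatenation produces a well-defined probability measure on $\bar\Om$ for which $M^{\mu,\Phi_{\bar\varphi}}$ is a true martingale. Because the optimal controls are only \emph{relaxed} (laws on $\cV\x\Cc\x\Qc$, not strategies one can plug into the strong construction of Section~\ref{sec:branching-diffusion}), each grafting is a concatenation of measures $\Pb_k\otimes_\eta\Pb^n_\cdot$, and passing from finitely many to infinitely many such operations is precisely the technical obstruction. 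The paper circumvents it by truncating the optimality requirement: it introduces the sets $\Tc^*_n(\mu)$ of trees optimal only up to generation $n$, proves each is nonempty by a \emph{finite} backward induction (one concatenation per step, grafting a whole optimal-for-$n$-generations subtree at the first branching time $\eta$), shows each $\Tc^*_n(\mu)$ is compact (via Lemmas~\ref{lem:closed-graph} and~\ref{lem:psi}), and concludes by Cantor's intersection theorem applied to the nested sequence $\Tc^*(\mu)=\bigcap_n\Tc^*_n(\mu)$. Without this, or an equivalent limiting device, your argument does not close.

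A secondary point: the measurable selection you invoke is of optimal \emph{single-particle} controls $(t,x)\mapsto\P^{t,x}\in\Rc^*(t,x,\mu)$, but the induction actually requires a measurable selection of optimal \emph{subtrees} $(s,\tilde e)\mapsto\Pb^n_{s,\tilde e}\in\Tc^*_n(s,\tilde e,\mu)$, since at the first branching time one must graft the entire remaining optimal population at once (and then tensorize over the particles present in $\tilde e$). This is why the paper needs the closed-graph property of $(t,e)\mapsto\Tc^*_n(t,e,\mu)$ (Lemma~\ref{lem:tc_mu_non_empty}), which is delicate because $Z_t$ is not continuous for the Skorokhod topology. Finally, a small correction: non-explosion already follows from the boundedness of $\gamma\sum_\ell \ell p_\ell$ as in Proposition~\ref{prop:def}; the second-moment condition $\sum_\ell\ell^2 p_\ell$ bounded is used for the $L^2$ estimate on $N$ (Lemma~\ref{lem:Nt}) that underlies compactness and the uniform integrability needed to upgrade local martingales to martingales.
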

	
	\begin{Remark}\label{rem:difficulty}
	\rm
		The difficulty to derive existence of an element in $\Tc^*(\mu)$ comes from the fact that each particle minimizes its own cost function rather than a cost function for the whole population. To fix ideas, consider a relaxed control problem of the form 
	  \begin{equation*}
	   \inf_{\bar{\P}\in\Tc(\mu)} {\bar{J}\left(\bar{\P}\right)} \quad \text{where }\bar{J}\left(\bar{\P}\right):=\E^{\Pb}\left[\bar{g}(Z)\right],
	  \end{equation*}
	  for some map $\bar{g}: \Dc\to\R.$ 
	 Then existence of optimal controls should follow by extension of the arguments in~\cite{elkaroui1987} showing that $\Tc(\mu)$ is compact and $\bar{J}$ is continuous.
	  However, our problem does not belong to this class and another strategy is needed to ensure existence of an optimal solution.  
	\end{Remark}
	
	\begin{Remark}
	\label{rem:Markov_Selection_Tree}
	\rm
	 The proof of Proposition~\ref{prop:non-empty-system} relies on concatenation of optimal particles in $\Rc^*(t,x,\mu)$ to construct an optimal tree in $\Tc^*(\mu).$
	 In view of Remark~\ref{rem:Markov_Selection}, under additional convexity assumptions, we can ensure existence  of an optimal control $\Pb\in\Tc^*(\mu)$ such that every particle $k$ runs an optimal control of the form $\Lambda^k(dt, da) = dt\, \delta_{\alpha(t, X^k_t)}(da)$ for some map $\alpha: [0, T] \x\R^d \to A.$
	\end{Remark}

\subsection{Relaxed MFG with Branching}
\label{sec:bmfg}

 We are now in a position to introduce the relaxed formulation of the MFG with branching described in Section~\ref{subsec:MFG_descrip} and to establish existence of solutions.

\begin{Definition} \label{def:MFG_proba}
	A solution to the relaxed MFG with branching is a probability distribution $\bar{\mu}\in\Pc(\Dc)$ such that $\bar{\mu}\in\{\Pb\circ Z^{-1};\, \Pb\in\Tc^*(\mu)\}$ where $\mu$ is defined from $\bar{\mu}$ as, for  all $\varphi:\R^d\to\R$ bounded,
	\begin{equation} \label{eq:def_mut}
	   \int_{\R^d} \varphi(x) \,\mu_t(dx) :=   \int_{E} \mathrm{Id}_{\varphi}(e) \,\bar{\mu}_t(de) = \E^{\bar{\mu}} \Big[ \sum_{k\in K_t} {\varphi(X^k_t)} \Big].
	\end{equation}
\end{Definition}

  The main result of this section ensures existence of solutions to the relaxed MFG with branching under rather general conditions.

\begin{Theorem}\label{thm:main_result}
	Let Assumptions~\ref{assum:RelaxedCtrl} and \ref{ass:relaxed-branching} hold. Then there exists a solution to the relaxed MFG with branching. 
\end{Theorem}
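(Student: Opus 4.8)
The strategy is to apply Kakutani's fixed point theorem to the set-valued map associating to an environment measure the laws of the optimal branching diffusions it induces. First I would fix a suitable convex compact subset of the target space. Since the environment enters only through $(\mu_t)_{t\in[0,T]}$ and, by~\eqref{eq:def_mut}, $\mu$ is determined by $\bar{\mu}\in\Pc(\Dc)$, I would work directly with laws $\bar{\mu}=\Pb\circ Z^{-1}$ for $\Pb\in\Tc(\mu)$. The moment bound~\eqref{eq:moment} of Proposition~\ref{prop:def}, together with Assumption~\ref{ass:relaxed-branching}(ii) controlling $\sum_\ell \ell^2 p_\ell$, gives uniform control on the expected population size and hence on the total mass $\mu_t(\R^d)$, uniformly in $t$. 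Combined with a moment estimate on the positions (obtained from the boundedness of $b,\sigma$ in Assumption~\ref{assum:RelaxedCtrl} and Lemma-type Gronwall arguments), this confines the candidate measures to a set $\Kc\subset\Pc(\Dc)$ that is convex and, via Prokhorov together with tightness of the martingale-problem solutions, relatively compact. I would then define the map $\Psi:\Kc\to 2^{\Kc}$ by $\Psi(\bar{\mu}):=\{\Pb\circ Z^{-1};\,\Pb\in\Tc^*(\mu)\}$ with $\mu$ read off from $\bar{\mu}$ through~\eqref{eq:def_mut}.

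The three hypotheses of Kakutani must then be checked. Nonemptiness of $\Psi(\bar{\mu})$ is exactly Proposition~\ref{prop:non-empty-system}, which guarantees $\Tc^*(\mu)\neq\emptyset$ under Assumptions~\ref{assum:RelaxedCtrl} and~\ref{ass:relaxed-branching}. Convexity of $\Psi(\bar{\mu})$ follows because $\Tc^*(\mu)$ is convex: both the martingale conditions in Definition~\ref{def:tree} and the optimality constraints in~\eqref{eq:def_T_star} are affine in $\Pb$, so a convex combination of optimal controls remains an optimal control, and the pushforward by $Z$ preserves convexity. The self-map property $\Psi(\Kc)\subset\Kc$ holds by the same a priori estimates used to construct $\Kc$, since those bounds depend only on the structural constants in the assumptions and not on the particular $\mu$. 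The crucial and most delicate requirement is that $\Psi$ has closed graph (upper hemicontinuity into a compact space).

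For the closed-graph property I would take sequences $\bar{\mu}_n\to\bar{\mu}$ in $\Kc$ and $\bar{\nu}_n\in\Psi(\bar{\mu}_n)$ with $\bar{\nu}_n\to\bar{\nu}$, choose $\Pb_n\in\Tc^*(\mu_n)$ realizing $\bar{\nu}_n$, and extract a weakly convergent subsequence $\Pb_n\to\Pb$ on $\bar{\Om}$ using the tightness established above. I would then show $\Pb\in\Tc^*(\mu)$ in two stages. First, passage to the limit in the martingale problem of Definition~\ref{def:tree}: the generator $\Hc^{\mu,\bar{a}}_s$ in~\eqref{eq:bd-generator} depends continuously on $\mu$ through $b,\sigma,\gamma,p_\ell$ by Assumptions~\ref{assum:RelaxedCtrl} and~\ref{ass:relaxed-branching}, and the map $(\mu_n,\bar{\om})\mapsto\int_0^t\int_{\bar{A}}\Hc^{\mu_n,\bar{a}}_s\Phi_{\bar{\varphi}}(Z_s)\,\bar{\Lambda}(ds,d\bar{a})$ is jointly continuous on the relevant sets, so $M^{\mu,\Phi_{\bar{\varphi}}}$ is a $\Pb$-martingale, giving $\Pb\in\Tc(\mu)$. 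Second, and harder, I must preserve optimality, i.e.\ verify the equalities $J_k(\mu,\Pb)=\E^{\Pb}[v(S_k,X^k_{S_k},\mu)]$ for every $k$. Here the continuity of the value function $v$ in $(x,\mu)$ and continuity of the individual cost $J_k$ under the convergence $\Pb_n\to\Pb$ are needed; the running and terminal cost terms pass to the limit by continuity of $f,g$ plus uniform integrability from the moment bounds, while the stopping time $\tau$ and the birth/death times $S_k,T_k$ require the continuity of $\tau$ off a $\Pb$-null set, exactly as noted in the footnote to Proposition~\ref{prop:non-empty-diffusion}. The \emph{main obstacle} is precisely this stability of optimality under the limit: because each particle optimizes its own criterion rather than a global functional (Remark~\ref{rem:difficulty}), one cannot simply invoke lower semicontinuity of a single cost, and one must instead combine the lower-semicontinuity of each $J_k$ with an upper bound obtained by approximating a limiting optimizer for $\mu$ by controls feasible for $\mu_n$, to squeeze $J_k(\mu,\Pb)$ to the value $\E^{\Pb}[v(\cdot,\mu)]$. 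Once the closed graph is established, Kakutani's theorem yields a fixed point $\bar{\mu}\in\Psi(\bar{\mu})$, which by Definition~\ref{def:MFG_proba} is the desired solution of the relaxed MFG with branching.
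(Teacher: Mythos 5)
Your proposal is correct and follows essentially the same route as the paper: Kakutani's fixed point theorem applied to $\bar\mu\mapsto\{\Pb\circ Z^{-1};\,\Pb\in\Tc^*(\mu)\}$, with nonemptiness supplied by Proposition~\ref{prop:non-empty-system}, convexity from the affine structure of Definition~\ref{def:tree} and~\eqref{eq:def_T_star}, a compact convex range from the uniform population-size and tightness estimates (the paper's Lemma~\ref{lem:psi}, resting on the second-moment bound of Lemma~\ref{lem:Nt}), and the closed graph from stability of the martingale problem together with joint continuity of $J_k$ and of the value function $v$ (the paper's Lemma~\ref{lem:closed-graph}). Your ``squeeze'' for preserving optimality is just a reformulation of the paper's argument, which passes to the limit in the equality $J_k(\mu_n,\Pb_n)=\E^{\Pb_n}[v(S_k,X^k_{S_k},\mu_n)]$ using the continuity of $v$ obtained via Berge's theorem.
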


\begin{proof} The idea of the proof is to apply  Kakutani's fixed point theorem~\cite[Corollary~17.55]{aliprantis2006} to the set-valued map
	\begin{equation*}
		\psi: \bar{\mu}\in\Pc(\Dc) \mapsto \left\{\Pb\circ Z^{-1};\, \Pb\in\Tc^*(\mu)\right\} \subset\Pc(\Dc).
	\end{equation*}
 	To this end, we need to check that the range of $\psi$ is contained in a compact convex subset of $\Pc(\Dc)$ and that $\psi$ has a closed graph and non-empty, convex values.
 	For all $\bar{\mu}\in\Pc(\Dc),$ the set $\psi(\bar{\mu})$ is nonempty in view of Proposition~\ref{prop:non-empty-system}
	and is a convex subset of $\Pc(\Dc)$ by definition of $\Tc^*(\mu)$ in~\eqref{eq:def_T_star}. 
	The conclusion then follows by applying Lemma~\ref{lem:closed-graph} and Lemma~\ref{lem:psi} below. 
	Their proofs are postponed to Section~\ref{sec:closed-graph} and Section~\ref{sec:proof-compact} respectively. 
	\qed
\end{proof}

	\begin{Lemma}\label{lem:closed-graph}
		Let Assumptions~\ref{assum:RelaxedCtrl} and \ref{ass:relaxed-branching} hold. 
		Then the set--valued map $\bar{\mu}\mapsto\Tc^*(\mu)$ has a closed graph, i.e., $\{(\bar{\mu}, \Pb);\ \Pb \in \Tc^*(\mu) \}$ is a closed subset of $\Pc(\Dc) \x \Pc(\Omb)$.
	\end{Lemma}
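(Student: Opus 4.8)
The plan is to establish sequential closedness, which suffices since $\Pc(\Dc)$ and $\Pc(\Omb)$ are Polish under the weak topology. So I take a sequence $(\bar{\mu}_n, \Pb_n) \to (\bar{\mu}, \Pb)$ in $\Pc(\Dc) \times \Pc(\Omb)$ with $\Pb_n \in \Tc^*(\mu_n)$, where $\mu_n$ (resp. $\mu$) is the environment measure associated to $\bar{\mu}_n$ (resp. $\bar{\mu}$) through~\eqref{eq:def_mut}, and aim to show $\Pb \in \Tc^*(\mu)$. The first step is to upgrade $\bar{\mu}_n \to \bar{\mu}$ into convergence of the environment measures. Since $\mu_{n,t}(\varphi) = \E^{\bar{\mu}_n}[\sum_{k\in K_t} \varphi(X^k_t)]$, the moment estimate~\eqref{eq:moment} of Proposition~\ref{prop:def}, together with the bound on $\sum_\ell \ell^2 p_\ell$ in Assumption~\ref{ass:relaxed-branching}(ii) (which controls the second moment of $N_t$), provides the uniform integrability needed to pass the expectation through the limit for every bounded continuous $\varphi$, as well as a uniform first-moment bound. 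This yields $\mu_{n,t} \to \mu_t$ in $\Mc_1(\R^d)$ for Lebesgue-almost every $t \in [0,T]$.

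Second, I would show $\Pb \in \Tc(\mu)$ by passing to the limit in the controlled martingale problem of Definition~\ref{def:tree}. Fix $\Phi \in \cC^2_b(\R)$, $\bar{\varphi} \in \cC^2_b(\K\times\R^d)$, and an $\bar{\cF}_s$-measurable bounded continuous test functional $h$. For each $n$ one has $\E^{\Pb_n}[(M_t^{\mu_n,\Phi_{\bar{\varphi}}} - M_s^{\mu_n,\Phi_{\bar{\varphi}}})\,h] = 0$. Writing $M^{\mu_n,\Phi_{\bar{\varphi}}} = M^{\mu,\Phi_{\bar{\varphi}}} + R_n$, where $R_n$ gathers the contribution of $\Hc^{\mu_n,\bar{a}}_r - \Hc^{\mu,\bar{a}}_r$, the continuity of $\Hc$ in $(s,e,\bar{a},\mu)$ under Assumption~\ref{assum:RelaxedCtrl} combined with $\mu_n \to \mu$ from the first step forces $R_n \to 0$ in $L^1(\Pb_n)$, again using the second-moment control of $N_t$ for equi-integrability. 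Since $(M_t^{\mu,\Phi_{\bar{\varphi}}} - M_s^{\mu,\Phi_{\bar{\varphi}}})\,h$ is continuous off a $\Pb$-null set of trajectories (the integral term in $M^{\mu,\Phi_{\bar{\varphi}}}$ is Skorokhod-continuous in $Z$, and $\Phi_{\bar{\varphi}}(Z_t)$ is continuous at continuity points of $Z$), the weak convergence $\Pb_n \to \Pb$ lets me pass to the limit and conclude that $M^{\mu,\Phi_{\bar{\varphi}}}$ is a $(\Pb,\bar{\F})$-martingale; invoking Skorokhod's representation theorem is a convenient way to phrase the almost-sure arguments.

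Third, and most delicate, I would verify the optimality condition $J_k(\mu,\Pb) = \E^{\Pb}[v(S_k, X^k_{S_k}, \mu)]$ for every $k$. This rests on two ingredients. First, the value function $v$ of~\eqref{eq:valuefunction} must be shown continuous in $(x,\mu)$; this follows from the compactness of $\Rc(t,x,\mu)$ and the continuity of the cost functional, exactly as in the proof of Proposition~\ref{prop:non-empty-diffusion}, giving $v(\cdot,\mu_n) \to v(\cdot,\mu)$ locally uniformly. Second, I would pass to the limit in the identity $J_k(\mu_n,\Pb_n) = \E^{\Pb_n}[v(S_k, X^k_{S_k}, \mu_n)]$ valid for each $n$. \emph{The main obstacle lies precisely here:} the birth and death times $S_k, T_k$ and the indicator $\mathbf{1}_{T_k = T}$ in the terminal cost are not continuous functionals on $\Dc$, so weak convergence does not transfer expectations automatically. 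The remedy, in the spirit of the footnote to Proposition~\ref{prop:non-empty-diffusion}, is to argue that the trajectories at which these functionals are discontinuous form a $\Pb$-null set: under any $\Pb \in \Tc(\mu)$ the jump times of $Z$ are totally inaccessible and do not charge $\{t = T\}$, so $S_k$, $T_k$ and $X^k_{S_k}$ are $\Pb$-almost surely continuous. Combined with the continuity of $v$ in $\mu$, the local uniform convergence $v(\cdot,\mu_n)\to v(\cdot,\mu)$, and the equi-integrability of the running cost and of $v$ coming from the second-moment bound on $N_t$, both sides converge; since they are equal for each $n$, their limits coincide, yielding the required equality and hence $\Pb \in \Tc^*(\mu)$.
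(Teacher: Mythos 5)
Your proposal is correct and follows essentially the same route as the paper's proof: pass to the limit in the controlled martingale problem using the uniform second-moment bound on $N_t$ together with the continuity of the integrand functional, and then pass to the limit in the optimality identity $J_k(\mu_n,\Pb_n)=\E^{\Pb_n}[v(S_k,X^k_{S_k},\mu_n)]$ via the joint continuity of $v$ and the $\Pb$-a.s.\ continuity of $S_k$, $T_k$, $X^k_{S_k}$. The only cosmetic differences are that the paper treats the terminal indicator through the identity $\mathbf{1}_{T_k=T}=\langle Z_T,\mathbf{1}_{\{k\}}\rangle$ and the Skorokhod-continuity of the projection $Z_T$ (rather than your null-set argument on branching times), is explicit about restricting the martingale test to times outside the countable set of fixed discontinuities of $Z$ under $\Pb$, and also records the (immediate) verification of the initial condition $\Pb\circ Z_0^{-1}=m_0\circ\pi^{-1}$.
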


	\begin{Lemma}\label{lem:psi}
		Let Assumptions~\ref{assum:RelaxedCtrl} and \ref{ass:relaxed-branching} hold.
		Then the set $\bigcup_{\bar{\mu}\in\Pc(\Dc)}\Tc(\mu)$ is contained in a compact convex subset of $\Pc(\bar{\Om})$.
	\end{Lemma}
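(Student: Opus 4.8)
The plan is to establish the two assertions of Lemma~\ref{lem:psi} separately, namely tightness (hence relative compactness of the closure) and convexity of a suitable enveloping set, and then verify that the closure is itself contained in an appropriate compact convex set. First I would recall from Proposition~\ref{prop:def}, specifically the moment estimate~\eqref{eq:moment}, that under Assumption~\ref{hyp:pop} (which is implied by Assumptions~\ref{assum:RelaxedCtrl} and \ref{ass:relaxed-branching}) the expected number of living particles is controlled uniformly, $\E^{\Pb}[\sup_{s}N_s]\leq \exp(\|\gamma\sum_\ell \ell p_{\ell+1}\|\,T)$, with a bound that does not depend on $\bar\mu$ because $\gamma$ and $\sum_\ell \ell p_\ell$ are bounded by hypothesis. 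This uniform control is the backbone: it lets me bound the total mass of $Z_t$ and, together with the boundedness of $b,\sigma$, the spatial spread of the particles, uniformly over all $\Pb\in\bigcup_{\bar\mu}\Tc(\mu)$.

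Next I would prove tightness of the family $\{\Pb\in\bigcup_{\bar\mu}\Tc(\mu)\}$ in $\Pc(\bar\Om)=\Pc(\bar\Vc\x\Dc)$, treating the two coordinates separately. For the control coordinate $\bar\Lambda$, tightness in $\bar\Vc$ is automatic (as in~\cite{elkaroui1987,lacker2015}) since the control space $A$ is compact and the first marginal is fixed to be Lebesgue measure; the weak topology on $\bar\Vc$ makes it compact. For the path coordinate $Z$ in $\cD=\mathbb{D}([0,T],E)$, I would use a standard Aldous-type tightness criterion for càdlàg $E$-valued processes: the martingale decomposition from Definition~\ref{def:tree}(ii), applied to test functions $\Phi_{\bar\varphi}$, yields semimartingale characteristics $(\bar B,\bar C,\bar\nu)$ (displayed in Remark~\ref{rem:MartingalePb}) whose total variation is controlled by $N_s$ times the bounds on $b,\sigma,\gamma,\sum_\ell\ell p_\ell$. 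Combined with the uniform moment bound~\eqref{eq:moment} and the requirement $\sum_\ell \ell^2 p_\ell$ bounded (Assumption~\ref{ass:relaxed-branching}(ii)) to control the jump intensity's second moment, these give uniform modulus-of-continuity estimates for $\langle Z_\cdot,\bar\varphi\rangle$ and uniform control of the jumps, so the family is tight on $\cD$. I would therefore obtain a compact set $\bar\Kc\subset\Pc(\bar\Om)$ containing $\bigcup_{\bar\mu}\Tc(\mu)$; by Prokhorov's theorem relative compactness is equivalent to this tightness.

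For convexity, I would argue that $\Tc(\mu)$ is itself convex for each fixed $\mu$, and that a convex enveloping compact set can be built from the tightness estimates. Convexity of $\Tc(\mu)$ follows directly from its definition via the martingale property: conditions~(i) and~(ii) in Definition~\ref{def:tree} are preserved under convex combinations, since $\Pb\mapsto\E^{\Pb}[M^{\mu,\Phi_{\bar\varphi}}_t-M^{\mu,\Phi_{\bar\varphi}}_s\mid\bar\Fc_s]=0$ is a linear (affine) constraint on $\Pb$, and the initial law constraint is likewise affine. To get a single compact convex set containing the entire union over $\bar\mu$, I would take the closed convex hull of the compact set $\bar\Kc$ from the previous step; the closed convex hull of a tight (relatively compact) subset of $\Pc(\bar\Om)$ is again compact because $\bar\Om$ is Polish and $\Pc(\bar\Om)$ embeds in a locally convex space where Krein--Šmulian / the compactness of closed convex hulls of compact sets in Polish-measure spaces applies.

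The main obstacle I anticipate is the tightness of the path component $Z$ in the Skorokhod space $\cD([0,T],E)$, because $E$ is an infinite-dimensional space of point-measures and the process has a genealogical branching structure where the \emph{number} of particles is itself random and jumps. The delicate point is to establish tightness at the level of the full measure-valued process rather than merely for each linear functional $\langle Z_\cdot,\bar\varphi\rangle$; this requires checking a compact-containment condition (the occupation of $Z_t$ stays in a compact subset of $E$ with high probability, which I get from~\eqref{eq:moment} bounding $N_t$ and from the uniform spatial moment bounds) together with the Aldous condition for a separating/convergence-determining class of functionals $\{\Phi_{\bar\varphi}\}$, and then invoking a Jakubowski-type criterion to lift functional tightness to tightness of $Z$ itself. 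Verifying that $\{\Phi_{\bar\varphi}\}$ is convergence-determining on $E$ and that the jump contributions (controlled via $\sum_\ell\ell^2 p_\ell$) do not accumulate is where the technical care is concentrated; the remaining convexity and Prokhorov arguments are comparatively routine.
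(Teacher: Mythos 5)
Your proposal is correct, and the tightness half follows essentially the paper's own route: the paper also reduces tightness of the measure-valued path component to tightness of the real-valued semimartingales $Y^{\bar\varphi}_t=\sum_{k\in K_t}\varphi^k(X^k_t)$ (via Roelly's criterion rather than your Jakubowski/Aldous formulation, but with the same ingredients: the uniform moment bound on $N$ from Lemma~\ref{lem:Nt}, localization at $\tau_n=\inf\{t:N_t\geq n\}$, and the bounded characteristics from Remark~\ref{rem:MartingalePb}), while compactness of $\Pc(\bar{\Vc})$ is immediate from compactness of $A$. Where you genuinely diverge is the convexity step. The paper does \emph{not} take a convex hull: it constructs an explicit enveloping set $\bar{\Tc}$ by enlarging the control space to $A=[-C,C]^d\x[-C,C]^{d\x d}\x[0,C]$ and letting the control itself play the role of $(b,\sigma,\gamma)$ (and similarly for $(p_\ell)_{\ell\in\N}$ when they depend on $\mu$), so that $\bar{\Tc}$ is the solution set of a single $\mu$-independent martingale problem; it is then convex by the linearity of the martingale constraints and compact by Lemma~\ref{lem:compact}, and contains $\bigcup_{\bar{\mu}}\Tc(\mu)$. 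Your alternative --- take the closed convex hull of a tight superset --- also works, and for a cleaner reason than the Krein--\v{S}mulian argument you sketch: the convex hull of a tight family is tight (any convex combination $\sum_i\lambda_i\Pb_i$ assigns mass at most $\sup_i\Pb_i(F^c)<\eps$ to the complement of the witnessing compact $F$), so its closure is compact by Prokhorov and convex as the closure of a convex set. Your approach is more elementary and avoids the change of control space; the paper's approach buys a structured enveloping set that is itself a family of martingale-problem solutions, closed by the argument of Lemma~\ref{lem:closed-graph}, which meshes with the rest of Section~\ref{sec:proof}. Note also two small imprecisions on your side, neither fatal: the relevant uniform moment bound in the relaxed setting is Lemma~\ref{lem:Nt} (second moment, uniform over $\Tc_{\mathrm{loc}}(\mu)$ and over $\mu$) rather than the strong-formulation estimate~\eqref{eq:moment}, and the martingale property is affine in $\Pb$ only in its testing form $\E^{\Pb}[(M_t-M_s)h]=0$ for bounded $\bar{\Fc}_s$-measurable $h$, not in the conditional-expectation form you wrote.
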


 \begin{Remark}
 \rm
  \rmi The ideas of this section can easily be extended to a broader class of branching diffusion processes. 
  For instance, we can take into account an immigration phenomenon to allow new players to enter the game exogenously at any time. 
  We can also consider that child particles start from a different position than the mother particle, say for instance to allow players to share their wealth between substitude players.
  
  \no\rmii The analysis of this section readily extends to a wide variety of objective functions so long as the cost function is continuous and convex.
 
 \end{Remark}
 
 \begin{Remark}
 \rm
 In view of Remark~\ref{rem:Markov_Selection_Tree}, under additional convexity assumptions, we can construct on some filtered probability space a branching diffusion process  where each particle runs an identical Markov control that solves the MFG with branching described in Section~\ref{subsec:MFG_descrip}.
  The details are omitted for the sake of conciseness.
 \end{Remark}

	\begin{Remark} \label{rem:uniqueness_proba}\rm
		In lines with Remark~\ref{rem:difficulty}, as each agent optimizes its own (local) cost function rather than the (global) cost of the whole population,
		it leads to serious difficulties in order to extend the variational arguments used in classical MFG.
		Concretely, given $\Pb \in \Tc^*(\mu)$ and $\Pb' \in \Tc(\mu) \setminus \Tc^*(\mu)$, the birth time $S_k$
		of particle $k\neq 1$ may have different distributions under $\Pb$ and $\Pb'$,
		and thus it may not be true that $J_k(\mu, \Pb) \le J_k(\mu, \Pb')$.
		In particular, because of the failure of this kind of variational argument,
		it is not straightforward to extend the arguments in \cite{CarLac2015} to prove uniqueness,
		or those in \cite{lacker2016general} to prove convergence  of solutions to the $n$-player game toward the  MFG.
		These delicate questions are left for future research.
	\end{Remark}

\subsection{Technical Proofs}
\label{sec:proof}

	We now provide the proofs of Lemma~\ref {lem:closed-graph}, Lemma~\ref{lem:psi} and Proposition~\ref{prop:non-empty-system}.
	Let us assume that Assumptions~\ref{assum:RelaxedCtrl} and \ref{ass:relaxed-branching} hold throughout this section.
	
\subsubsection{Proof of Lemma \ref{lem:closed-graph}}
\label{sec:closed-graph}

	Recall that $N_t=\langle Z_t,\mathbf{1}\rangle$ denotes the size of the population at time $t\in[0,T].$ 
	For every $\mu=(\mu_t)_{t\in[0,T]},$ $ \mu_t\in\Mc(\R^d)$, we denote by $\Tc_{\mathrm{loc}} (\mu)$ the collection of all probability measures $\Pb$ on $\Omb$
	such that $N_0 = 1$, $\Pb$--a.s., and the process
	\begin{equation*}
		\Phi(N_t) 
		- 
		\int_0^t \sum_{k \in K_s} \int_{A} \gamma(s,X^k_s, \mu_s, a) 
			\Big(
				\sum_{\ell\in\N} p_{\ell}(s,X^k_s,\mu_s) \Phi(N_s + \ell - 1) - \Phi(N_s)
			\Big)
		\Lambda^k(da, ds),
	\end{equation*}
	is a $\Pb$--local martingale  for all $\Phi \in \Cc^2_b(\R)$.
	Notice that $\Tc(\mu) \subset \Tc_{\mathrm{loc}}(\mu)$.

	\begin{Lemma} \label{lem:Nt}
		There exists a constant $C> 0$, such that for all $\mu=(\mu_t)_{t\in[0,T]}, \mu_t\in\Mc(\R^d),$
		\begin{equation} \label{eq:bound_Nt}
			\sup_{\Pb\in \Tc_{\mathrm{loc}}(\mu)}\E^{\Pb} \Big[\sup_{t\in [0,T]} \left\{N_t^2\right\} \Big] \le C.
		\end{equation}
	\end{Lemma}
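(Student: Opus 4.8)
The plan is to bound the supremum of $N$ by the terminal value of a nondecreasing dominating process, and then run a Gronwall argument on the second moment of that process. The key observation is that $N$ can jump upward by an unbounded amount at a single branching event, so one cannot hope to estimate $\E^{\Pb}[\sup_t N_t^2]$ through a maximal inequality applied to the martingale part of $N_t^2$; instead one should turn the supremum into a terminal value by monotonicity.

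Concretely, I would introduce the nondecreasing process recording the cumulative upward jumps of $N$,
\[
	P_t := N_0 + \sum_{s\le t}\big(N_s - N_{s-}\big)^+ .
\]
Since $N_t = N_0 + \sum_{s\le t}(N_s-N_{s-}) \le P_t$ and $P$ is nondecreasing, one has $\sup_{t\in[0,T]}N_t \le P_T$, so it suffices to show $\sup_{\Pb\in\Tc_{\mathrm{loc}}(\mu)}\E^{\Pb}[P_T^2]\le C$. Each upward jump of $N$ equals $(\ell-1)^+$ when a particle branches into $\ell$ offspring, at the very intensity appearing in the definition of $\Tc_{\mathrm{loc}}(\mu)$. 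Because $(N_s-N_{s-})^+$ is a function of the jump of $N$, the scalar martingale problem defining $\Tc_{\mathrm{loc}}(\mu)$ pins down the predictable compensator of the jumps of $N$ and hence yields, for every $\Psi\in\Cc^2_b(\R)$, that
\[
	\Psi(P_t) - \int_0^t \sum_{k\in K_s}\int_A \gamma(s,X^k_s,\mu_s,a)\sum_{\ell\in\N}\big(\Psi(P_s+(\ell-1)^+)-\Psi(P_s)\big)p_\ell(s,X^k_s,\mu_s)\,\Lambda^k(da,ds)
\]
is a $\Pb$--local martingale.

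Next I would localize with $\sigma_n := \inf\{t;\,P_t\ge n\}$ and choose $\Psi_n\in\Cc^2_b(\R)$ convex, with $\Psi_n(x)=x^2$ on $[0,n]$ and $0\le\Psi_n''\le 2$, so that $\Psi_n'(P_s)=2P_s$ for $P_s\le n$. A Taylor estimate gives $\Psi_n(P_s+h)-\Psi_n(P_s)\le 2P_s h + h^2$ for $h\ge 0$, whence, plugging $h=(\ell-1)^+$ and using $\sum_{\ell\in\N}(\ell-1)^+p_\ell\le \big\|\sum_{\ell\in\N}\ell p_\ell\big\|$ and $\sum_{\ell\in\N}((\ell-1)^+)^2 p_\ell\le\big\|\sum_{\ell\in\N}\ell^2 p_\ell\big\|$ (Assumption~\ref{ass:relaxed-branching}(ii)), the inner sum is $\le C(1+P_s)$. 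Summing over the $N_s$ particles in $K_s$, using $\|\gamma\|<\infty$ (Assumption~\ref{assum:RelaxedCtrl}) and $N_s\le P_s$, the generator is $\le C N_s(1+P_s)\le C(1+P_s^2)=C(1+\Psi_n(P_s))$ on $[0,\sigma_n)$. Optional stopping and $\E^{\Pb}[N_0^2]=1$ then give
\[
	\E^{\Pb}\big[\Psi_n(P_{t\wedge\sigma_n})\big] \le 1 + C\int_0^t\big(1+\E^{\Pb}[\Psi_n(P_{s\wedge\sigma_n})]\big)\,ds,
\]
and Gronwall yields a bound uniform in $n$, in $\Pb\in\Tc_{\mathrm{loc}}(\mu)$, and in $\mu$, the constant depending only on $T$, $\|\gamma\|$ and $\big\|\sum_{\ell\in\N}\ell^2 p_\ell\big\|$. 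Letting $n\to\infty$, if $\sigma_\infty:=\lim_n\sigma_n\le T$ held with positive probability then $\Psi_n(P_{T\wedge\sigma_n})\ge n^2\to\infty$ there, contradicting the uniform bound via Fatou; so $P$ does not explode, and monotone convergence gives $\E^{\Pb}[P_T^2]\le C$, hence $\E^{\Pb}[\sup_{t}N_t^2]\le C$.

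The main obstacle is precisely the interplay between the supremum and the uniformity over the whole class $\Tc_{\mathrm{loc}}(\mu)$ in the presence of unbounded offspring numbers: a direct Doob or Burkholder--Davis--Gundy estimate on the martingale part of $N^2$ would require controlling its stopped quadratic variation, whose expectation blows up with the localization level (it would demand a bound on $\sum_{\ell}\ell^4 p_\ell$ together with an extra factor $n^2$). Dominating $N$ by the monotone process $P$ removes the supremum from the estimate, so that only the second-moment control $\sum_{\ell}\ell^2 p_\ell$ bounded is needed. The two points to treat carefully are the derivation of the Dynkin formula for $\Psi(P_t)$ from the scalar martingale problem in the definition of $\Tc_{\mathrm{loc}}(\mu)$ (a marked-point-process / predictable-compensator argument, using that $(N_s-N_{s-})^+$ is a function of the jump of $N$) and the non-explosion of $P$, which the a priori bound itself provides.
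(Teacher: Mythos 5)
Your proof is correct in substance and rests on the same core idea as the paper's: dominate $\sup_{t}N_t$ by the terminal value of the nondecreasing process of cumulative births and run Gronwall on its second moment, which is exactly why Assumption~\ref{ass:relaxed-branching}(ii) requires $\sum_{\ell\in\N}\ell^2 p_\ell$ to be bounded. The implementations differ, though. The paper first identifies the characteristics of the semimartingale $N$ via Theorem~II.2.42 and Corollary~II.2.38 of \cite{jacod03}, then invokes the representation theorem \cite[Theorem II.7.4]{ikeda89} to realize the jumps of $N$ through a genuine Poisson random measure $Q(ds,dz,dk)$ on an extension of the canonical space, after which the moment estimate is delegated to ``a classical computation as in Proposition~\ref{prop:def}'', i.e.\ the strong-solution argument of \cite{claisse2018}. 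You instead stay on the canonical space and extract the Dynkin formula for $\Psi(P_t)$ directly from the compensator of the jump measure of $N$, then truncate the test function and localize. Your route is more self-contained (no enlargement of the space, no representation theorem), at the price of having to justify carefully the marked-point-process step that you rightly flag; the paper's route outsources that work to standard references and reuses an existing computation verbatim.

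One small repair is needed: the test function you describe does not exist. A $\Cc^2$ function that is convex, equal to $x^2$ on $[0,n]$ and bounded is impossible, since convexity forces $\Psi_n'(x)\ge 2n$ for $x\ge n$ and hence at least linear growth. Drop convexity: it suffices to take $\Psi_n\in\Cc^2_b(\R)$ nonnegative, nondecreasing on $[0,\infty)$, with $\Psi_n(x)=x^2$ on $[0,n]$ and $\Psi_n(x)\le x^2$ for $x\ge n$. Then for $P_{s-}\le n$ and $h\ge 0$ one still has $\Psi_n(P_{s-}+h)-\Psi_n(P_{s-})\le (P_{s-}+h)^2-P_{s-}^2=2P_{s-}h+h^2$, which is all your Taylor estimate actually uses, and $\Psi_n(P_{T\wedge\sigma_n})\ge n^2$ on $\{\sigma_n\le T\}$ still holds for the non-explosion step. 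With this modification the stopped process is a bounded martingale on $[0,\sigma_n]$, and the Gronwall bound together with the passage to the limit go through exactly as you wrote them.
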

	\begin{proof}
	Let $\Pb \in \Tc_{\mathrm{loc}}(\mu)$,
	it follows by Theorem II.2.42 of Jacod and Shiryaev~\cite{jacod03} that the process $N$ is a $\Pb$--semimartingale with known characteristics, see Remark~\ref{rem:MartingalePb}. 
	Together with Corollary~II.2.38 of~\cite{jacod03}, it yields the following representation: 
	\begin{equation*}
	 N_t = N_0 + \int_{[0,t]\x\N} { n \,\mu_N(ds, dn)},
	\end{equation*}
	where $\mu_N(ds,dn) = \sum_{r\in [0,T]}{\1_{\{\Delta N_r\neq 0\}}\delta_{(r,\Delta N_r)}}(ds,dn)$ is a random measure with compensator 
	\begin{equation*}
	\nu_N(ds, dn) :=  \sum_{k \in K_s} \int_A {\gamma(s,X^k_s, \mu_s, a)  \sum_{\ell\in\N} p_{\ell}(s,X^k_s,\mu_s) \,\delta_{\{\ell -1\}}(dn) \, \Lambda^k(ds, da).}
	\end{equation*}
	We can further use a representation theorem~\cite[Theorem II.7.4]{ikeda89} to ensure existence of a Poisson measure $Q(ds,dz,dk)$ on $[0,T]\x\R_+\x\K$ with intensity $ds\,dz\,\sum_{k\in\K}\delta_k(dk)$ on an extension of the canonical space, such that 
	\begin{equation*}
	 N_t = N_0 + \int_{[0,t]\x\R_+\x\K} {\1_{k\in K_s}\sum_{\ell\in\N} {(\ell-1)} \1_{z\in I^\mu_\ell(s,X^k_s,\Lambda^k_s)} \,Q(ds, dz, dk)},
	\end{equation*}
	where $(\Lambda^k_s)_{s\in [0,T]}$ is a $\F$-predictable process valued in $\Pc(A)$ such that $\Lambda^k(ds, da) = ds\,\Lambda^k_s(da)$ and for all $(s,x,\kappa)\in[0,T]\x\R^d\x\Pc(A),$
	\begin{equation*}
	 I^\mu_\ell(s,x,\kappa) := \Big[\int_A {\gamma(s,x,\mu_s, a) \,\kappa(da)} \sum_{i=0}^{\ell-1}{p_i(s,x,\mu_s)}, \int_A {\gamma(s,x,\mu_s, a) \,\kappa(da)} \sum_{i=0}^{\ell}{p_i}(s,x,\mu_s)\Big).
	\end{equation*}
	Since $\gamma$ and $\sum_{\ell\in\N} \ell^2 p_\ell$ are uniformly bounded by assumption, a classical computation as in Proposition~\ref{prop:def} yields \eqref{eq:bound_Nt}.
	\qed
	\end{proof}

	\noindent {\bf Proof of Lemma \ref{lem:closed-graph}.}
	Let $(\bar{\mu}_n,\Pb_n)_{n\in\N}$ be a sequence in $\Pc(\Dc)\x\Pc(\bar{\Om})$ satisfying $\Pb_n\in\Tc^*(\mu_n)$ and converging to $(\bar{\mu},\P).$ We aim to show that $\Pb\in\Tc^*(\mu).$
 
  \noindent\rmi First we observe that $\Pb\circ Z_0^{-1} = m_0\circ \pi^{-1}$ since the projection $Z_0$ is continuous for the Skorokhod topology. See, \eg, Billingsley~\cite[Theorem 12.5]{billingsley2013}.
 
	\noindent\rmii Next we check that $M^{\mu,\Phi_{\bar{\varphi}}}$ is a martingale under $\Pb$ for all $\Phi\in\cC_b^2(\R)$, $\bar{\varphi}\in\cC_b^2(\K\x\R^d)$. 
	Since $M^{\mu,\Phi_{\bar{\varphi}}}$ is a martingale under $\Pb_n$, it holds for all $t\leq s,$ $h:\Omb\to\R$ $\bar{\cF}_t$--measurable, continuous and bounded,
	\begin{equation}\label{eq:mart}
		\E^{\Pb_n}\Big[\Big(M^{\mu_n,\Phi_{\bar{\varphi}}}_{s} - M^{\mu_n,\Phi_{\bar{\varphi}}}_{t}\Big) h\Big] = 0.
	\end{equation}
	In view of Proposition~VI.3.14 in~\cite{jacod03}, there exists $D_{\Pb}\subset (0,T)$ countable such that
for all $t,s\notin D_{\Pb},$
   \begin{equation*}
    \E^{\Pb_n}\Big[\Big(\Phi_{\bar{\varphi}}(Z_s) - \Phi_{\bar{\varphi}}(Z_t)\Big) h\Big] \xrightarrow[n\to\infty]{} \E^{\Pb}\Big[\Big(\Phi_{\bar{\varphi}}(Z_s) - \Phi_{\bar{\varphi}}(Z_t)\Big) h\Big].
   \end{equation*}
     Additionally, it follows by a straighforward extension of Corollary~A.5 in Lacker~\cite{lacker2015} to the Skorokhod space that 
    \begin{equation*}
     (\bar{\lambda},z)\mapsto \int_t^s \!\! \int_{\bar{A}} {\cH^{\mu,\bar{a}}_r {\Phi_{\bar{\varphi}}}\left(z(r)\right) \,\bar{\lambda}(dr, d\bar{a})},
    \end{equation*}
	is continuous, and there exists a constant $C>0$ such that
	\begin{equation*}
		\Big| \int_t^s \!\!\int_{\bar{A}} {\cH^{\mu,\bar{a}}_r {\Phi_{\bar{\varphi}}}\left(Z_r\right) \,\bar{\Lambda}(dr, d\bar{a})} \Big|  \le C  \sup_{r\in [0,T]} \left\{N_r\right\}.
	\end{equation*}
	Using Lemma~\ref{lem:Nt}, we can thus pass to the limit in~\eqref{eq:mart} and deduce that for all $t,s\notin D_{\Pb},$
	\begin{equation*} 
		\E^{\Pb}\Big[\Big(M^{\mu,\Phi_{\bar{\varphi}}}_{s} - M^{\mu,\Phi_{\bar{\varphi}}}_{t} \Big) h\Big] = 0.
	\end{equation*}
     If $t$ or $s$ belong to $D_{\Pb},$ this equality still holds as we can pass to the limit with a decreasing sequence in $[0,T]\setminus D_{\Pb}$ converging to $t$ or $s$. We conclude that  $M^{\mu,\Phi_{\bar{\varphi}}}$ is a martingale under $\Pb.$

	\noindent\rmiii Finally, it holds for all $k\in \K$,
	\begin{equation*}
		J_k\left(\mu_n, \Pb_n\right)
		=
		\E^{\Pb_n}\big[v\big(S_{k}, X^k_{S_k},\mu_n \big) \big].
	\end{equation*}
	In view of Proposition~VI.2.7 in~\cite{jacod03}, the random variables $S_k$, $T_k$ and $X_{S_k}^k$ are continuous on $\Omb$. 
	Additionally, it follows by extension of the arguments in \cite{elkaroui1987,lacker2015} that $(t,x,\bar{\mu})\mapsto v(t,x,\mu)$ is 
	continuous\footnote{The main idea is to use Berge Theorem~\cite[Theorem~17.31]{aliprantis2006}. To this end, it suffices to show that the set-valued map $(t,x,\bar{\mu})\mapsto \Rc(t,x,\mu)$ has closed graph, nonempty compact values and that any control rule $\P\in\Rc(t,x, \mu)$ can be approximated by control rules $\P_n \in \Rc(t_n, x_n, \mu_n)$ whenever $(t_n, x_n, \bar{\mu}_n) \to (t,x, \bar{\mu})$.}
	on $[0,T]\x\R^d\x\Pc(\Dc).$
	We deduce that
	\begin{equation*}
		\E^{\Pb_n}\big[v\big(S_{k}, X^k_{S_k},\mu_n \big) \big] \xrightarrow[n\to\infty]{} \E^{\Pb}\big[v\big(S_{k}, X^k_{S_k},\mu \big) \big].
	\end{equation*}	
	Next a straightforward extension of Corollary~A.5 in~\cite{lacker2015} as above together with the continuity of the projection $Z_T$~\cite[Theorem 12.5]{billingsley2013} yields that
	\begin{equation*}
	 J_k\left(\mu_n, \Pb_n\right) \xrightarrow[n\to\infty]{} J_k\left(\mu, \Pb\right),
	\end{equation*}
	using the identity $\1_{T_k=T}=\langle Z_T,\1_{\{k\}}\rangle$ for all $k\in \bigcup_{t\in[0,T]} K_t$ to handle the indicator function.
	\qed

\subsubsection{Proof of Lemma~\ref{lem:psi}}
\label{sec:proof-compact}

\begin{Lemma}\label{lem:compact}
 The set $\Tc(\mu)$ is compact.
\end{Lemma}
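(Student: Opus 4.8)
The plan is to prove that $\Tc(\mu)$ is compact by showing it is a closed subset of a relatively compact subset of $\Pc(\bar{\Om})$. Since $\bar{\Om} = \bar{\Vc}\x\cD$, I would first reduce tightness of $\Tc(\mu)$ to tightness of its two marginals. The marginal on $\bar{\Vc}$ requires no work: because $A$ is compact, $\bar{A}=A^\K$ is compact by Tychonoff, hence $[0,T]\x\bar{A}$ is compact, and the set $\bar{\Vc}$ of measures on $[0,T]\x\bar{A}$ with total mass $T$ and first marginal equal to Lebesgue is weakly compact; thus $\Pc(\bar{\Vc})$ is itself compact and any family of $\bar{\Vc}$-marginals is automatically tight.

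The substantial step is tightness of the $\cD=\cD([0,T],E)$-marginal. I would reduce to scalar functionals through the test functions $\Phi_{\bar{\varphi}}$ and invoke a tightness criterion for measure-valued c\`adl\`ag processes (in the spirit of Roelly's criterion, or equivalently Aldous' criterion applied to $\langle Z,\bar{\varphi}\rangle=\Phi_{\bar{\varphi}}(Z)$ with $\Phi=\mathrm{Id}$). For the compact containment, the one-dimensional law of $\langle Z_t,\bar{\varphi}\rangle$ is controlled by $\E^{\Pb}[\langle Z_t,|\bar{\varphi}|\rangle]\le\|\bar{\varphi}\|\,\E^{\Pb}[N_t]$, which is bounded uniformly over $\Tc(\mu)$ by Lemma~\ref{lem:Nt}; combined with the tightness of the fixed measure $m_0$ and the boundedness of the coefficients, this keeps the mean occupation measure tight so that no mass escapes to spatial infinity, giving tightness of $Z_t$ in $E$. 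For the path-regularity condition I would use the semimartingale decomposition of Definition~\ref{def:tree}(ii), namely $\Phi_{\bar{\varphi}}(Z_t)=M^{\mu,\Phi_{\bar{\varphi}}}_t+\int_0^t\int_{\bar{A}}\Hc^{\mu,\bar{a}}_s\Phi_{\bar{\varphi}}(Z_s)\,\bar{\Lambda}(ds,d\bar{a})$. The finite-variation part is dominated by the estimate $|\int_s^{s'}\int_{\bar{A}}\Hc^{\mu,\bar{a}}_r\Phi_{\bar{\varphi}}(Z_r)\,\bar{\Lambda}|\le C\sup_r N_r$ already obtained in the proof of Lemma~\ref{lem:closed-graph}, and the martingale part is controlled through its predictable quadratic variation, which is likewise dominated by $\sup_r N_r$. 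Since $\sup_r N_r$ is bounded in $L^2$ uniformly over $\Tc(\mu)$ by Lemma~\ref{lem:Nt}, the oscillation of $\Phi_{\bar{\varphi}}(Z)$ on small time intervals tends to zero uniformly, which is exactly Aldous' condition.

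It remains to prove that $\Tc(\mu)$ is closed. Given $\Pb_n\in\Tc(\mu)$ with $\Pb_n\to\Pb$, I would verify that $\Pb$ satisfies Definition~\ref{def:tree}. Condition~(i) passes to the limit since $Z_0$ is continuous on $\cD$. Condition~(ii) is obtained exactly as in steps~(i)--(ii) of the proof of Lemma~\ref{lem:closed-graph}, now with $\mu$ fixed and the optimality requirement removed: one restricts to the countable set of continuity points of $\Pb$, uses the continuity of $(\bar{\lambda},z)\mapsto\int_s^{s'}\int_{\bar{A}}\Hc^{\mu,\bar{a}}_r\Phi_{\bar{\varphi}}(z(r))\,\bar{\lambda}(dr,d\bar{a})$ together with the bound recalled above (a straightforward extension of Corollary~A.5 in~\cite{lacker2015}), and passes to the limit in the martingale identity with the uniform integrability supplied by Lemma~\ref{lem:Nt}. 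Hence $\Pb\in\Tc(\mu)$, and a closed subset of a relatively compact set is compact.

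The main obstacle is the tightness of the $\cD$-marginal: both the compact containment in $E$, which forces me to rule out escape of mass in the label space $\K$ as well as in $\R^d$ by combining the $L^2$-bound on $N_t$ with tightness of the mean measure, and the verification of Aldous' oscillation condition uniformly over the entire family $\Tc(\mu)$ from the semimartingale characteristics, are the places where the branching structure makes the argument more delicate than in the classical diffusive setting.
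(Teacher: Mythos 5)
Your proof is correct and follows essentially the same route as the paper: closedness via the argument of Lemma~\ref{lem:closed-graph}, compactness of the $\bar{\Vc}$-marginal by compactness of $[0,T]\x\bar{A}$, and reduction of the $\Dc$-marginal to tightness of the real-valued processes $Y^{\bar{\varphi}}$ via Roelly's criterion, controlled through the semimartingale characteristics and the moment bound of Lemma~\ref{lem:Nt}. The only cosmetic difference is that the paper first localizes with $\tau_n=\inf\{t:N_t\ge n\}$ and applies the tightness criterion of Jacod \emph{et al.}, then removes the localization using $\Pb(\tau_n\le T)\le C/n$, whereas you invoke Aldous' criterion directly with the $L^2$-bound on $\sup_t N_t$; both are valid.
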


\begin{proof}
	First we observe that the set $\Tc(\mu)$ is closed in view of the proof of Lemma~\ref{lem:closed-graph}. Next we  aim at showing that it is relatively compact in $\Pc(\bar{\Om})$. As $[0,T]\x\bar{A}$ is compact, so are $\bar{\Vc}$ and $\Pc(\bar{\Vc}).$ Thus it remains to check that $\{\Pb\circ Z^{-1}, \Pb\in\Tc(\mu)\}$ is relatively compact in $\Pc(\Dc)$. According to Theorem~2.1 in Roelly~\cite{roelly1986}, it suffices to show that, for all $\bar{\varphi}\in\Cc_b^2(\K\x\R^d)$, $\{\Pb\circ (Y^{\bar{\varphi}})^{-1}, \Pb\in\Tc(\mu)\}$ is tight where 
	\begin{equation*}
	Y^{\bar{\varphi}}_t := \sum_{k \in K_t} \varphi^k(X^k_t),\quad t\in[0,T].
	\end{equation*}

	Let $\bar{\varphi}\in\Cc_b^2(\K\x\R^d)$,
	it follows by Theorem~II.2.42 and Corollary~II.2.38 of Jacod and Shiryaev~\cite{jacod03} that $Y^{\bar{\varphi}}$ is a semimartingale,
	which admits the decomposition (w.r.t. the canonical filtration $\bar \F$)
  \begin{equation*}
  Y_t^{\bar{\varphi}} = Y_0^{\bar{\varphi}} + A^{\bar{\varphi}}_t + M^{\bar{\varphi}}_t,
  \end{equation*}
  where $A^{\bar{\varphi}}$ is a process with finite variation given by
  \begin{multline*}
  A^{\bar{\varphi}}_t := \int_0^t \sum_{k\in K_s} {\int_{A} {{ \Lc^{\mu,a}_s \varphi^k(X^k_s)} \,\Lambda^k(ds,da)}} \\
  +  \int_0^t \sum_{k\in K_s} {\int_A {\gamma(s,X^k_s, \mu_s, a) \sum_{\ell\in\N} {p_{\ell}(s,X^k_s,\mu_s) \Big(\sum_{i=1}^{\ell}\varphi^{ki}(X^k_s) - \varphi^k(X^k_s)\Big)}}\, \Lambda^k(da, ds)},
  \end{multline*}
  and $M^{\bar{\varphi}}_t$ is a local martingale with quadratic variation 
  \begin{multline*}
		\langle M^{\bar{\varphi}}\rangle_t = \int_0^t \sum_{k\in K_s} {\int_{A} {\big|D\varphi^k(X^k_s) \sigma(s,X_s^k,\mu_s,a)\big|^2} \,\Lambda^k(ds,da) } \\
		+ \int_0^t \sum_{k \in K_s} \int_A {\gamma(s,X^k_s, \mu_s, a) \sum_{\ell\in\N} {p_{\ell}(s,X^k_s,\mu_s) \Big(\sum_{i=1}^{\ell}\varphi^{ki}(X^k_s) - \varphi^k(X^k_s)\Big)^2}  \, \Lambda^k(da, ds)}. 
  \end{multline*}  
 Then we consider the localized process $Y^{\bar{\varphi},n}:=Y^{\bar{\varphi}}_{\cd\we\t_n}$ with $\t_n:= \inf\{t\ge 0;\, N_t\ge n\}$
 	and we observe that 
\begin{equation*}
 V(A^{\bar{\varphi}})_{t\wedge\tau_n} + \langle M^{\bar{\varphi}}\rangle_{t\wedge\tau_n} \leq C t,
\end{equation*}
where $V(A^{\bar{\varphi}})$ stands for the total variation of $A^{\bar{\varphi}}$. 
Thus it follows from Theorem~2.3 of Jacod \textit{et al.}~\cite{jacod83} that the family $\{ \Pb \circ (Y^{\bar{\varphi},n})^{-1};\, \Pb \in \Tc(\mu) \}$ is tight.
To conclude, it remains to notice that, in view of Lemma \ref{lem:Nt},
	\begin{equation*}
		\sup_{\Pb\in\Tc(\mu)} {\Pb \big(\tau_n \leq T \big)} \le \frac{C}{n},
	\end{equation*}
	and so the family $\{ \Pb \circ (Y^{\bar{\varphi}})^{-1};\, \Pb \in \Tc(\mu) \}$ is also tight.
	\qed
\end{proof}

	\noindent {\bf Proof of Lemma~\ref{lem:psi}}
	Assume first that the coefficients $(p_\ell)_{\ell\in\N}$ do not depend on $\mu.$
	Then the proof is a direct consequence of Lemma~\ref{lem:compact} above. 
	Indeed, let $C:=\|b\|\vee\|\sigma\| \vee \|\gamma\|$ and consider the case $A=[-C,C]^d\x[-C,C]^{d\x d} \x [0, C]$ and 
	\begin{equation*}
		b(t,x,\mu,a) = \beta,
		~~\sigma(t,x,\mu,a) = \upsilon,
		~~ \gamma(t,x,\mu,a) = \eta,
		\quad\mbox{for all}~ a=(\beta,\upsilon, \eta)\in A.
	\end{equation*}
	Denote by $\bar{\Tc}$ be the set of probability measures in Definition \ref{def:tree} with the above coefficient $(b, \sigma, \gamma)$,
	which does not depend on $\mu.$
	Notice that $\bar{\Tc}$ is convex by definition and that it is compact in view of Lemma~\ref{lem:compact}.
	To conclude, it remains to observe that
	\begin{equation*}
		 \bigcup_{\mu\in\Pc(\Dc)} {\Tc(\mu)} \subset \bar{\Tc}.
	\end{equation*}
	In the general case, when $(p_\ell)_{\ell\in\N}$ depend on $\mu,$ the proof follows by the same arguments using a straightforward extension of Lemma~\ref{lem:compact} where the coefficient $(p_\ell)_{\ell\in\N}$ are being controlled.  \qed

\subsubsection{Proof of Proposition~\ref{prop:non-empty-system}}
\label{sec:proof-empty}

	\noindent {\bf Proof of Proposition~\ref{prop:non-empty-system}.}
	$\mathrm{(i)}$ First, we observe that 
	\begin{equation*}
		 \Tc^*(\mu)=\bigcap_{n\in\N}{\Tc^*_n(\mu)},
	\end{equation*}
	where $\Tc^*_n(\mu)$ is the collection of  $\Pb\in\Tc(\mu)$ satisfying the optimality constraints~\eqref{eq:def_T_star} up to the $n$th generation, \ie,
	\begin{equation*}
		\Tc^*_n(\mu)
		:=
		\Big\{
		\Pb\in\Tc(\mu) ~:
		J_k\left(\mu,\Pb\right)
		=
		\E^{\Pb}\left[v\big(S_{k}, X^k_{S_k},\mu \big) \right],
		~k\in\bigcup_{i=1}^n{\N^i}
		\Big\}.
	\end{equation*}
	Notice that the set $\Tc^*_n(\mu)$ is closed in view of the proof of Lemma~\ref{lem:closed-graph}. Together with Lemma~\ref{lem:psi}, it follows that $\Tc^*_n(\mu)$ is compact.  
	It remains to show that $\Tc^*_n(\mu)$ is non-empty in order to conclude by Cantor's intersection theorem.
	
	\vspace{0.5em}
	
	\noindent $\mathrm{(ii)}$
	To show that $\Tc^*_n(\mu)$ is nonempty,
	we use an induction argument to construct an optimal tree up to the $(n+1)$th generation by concatenation of an optimal relaxed control for the first particle and an optimal tree for the subsequent $n$ generations.  
	To this end, given $(t,e)\in[0,T]\x E,$ we introduce the set $\Tc^*_n(t,e,\mu)$ as the collection of $\Pb\in\Pc(\bar{\Om})$ such that
	\begin{itemize}
		\item[(a)] $\Pb(Z_s = e,\, s \le t )=1$;
  
		\item[(b)] for all $\Phi\in\Cc^2_b(\R)$, $\bar{\varphi}\in\Cc_b^2(\K\x\R^d)$, the process
		\begin{equation*}
			M_s^{t,\mu,\Phi_{\bar{\varphi}}}(\bar{\om}) := \Phi_{\bar{\varphi}}\left(z(s)\right) - \int_t^s \!\!\int_{\bar{A}} {\cH^{\mu,\bar{a}}_r {\Phi_{\bar{\varphi}}}\left(z(r)\right) \,\bar{\lambda}(dr, d\bar{a})},\quad s\in [t,T],
		\end{equation*} 
		 is a $\Pb$--martingale;
  
		\item[(c)] for all $k=k' k''$ such that $k'\in K_0$ and $k''\in\bigcup_{i=1}^{n-1}\N^i$,
		\begin{equation*}
			J_k\left(\mu,\Pb\right)
			=
			\E^{\Pb}\left[v\big(S_k, X^k_{S_k},\mu \big) \right].
		\end{equation*}
	\end{itemize}
	Let us show that $\Tc^*_n(t,e,\mu)$  is non-empty for all $(t,e)\in[0,T]\x E$ by induction.
	
	\vspace{0.5em}

	\noindent $\mathrm{(ii.1)}$
	For the base case $n=0,$ we observe that there is no optimality constraint~(c). Thus, existence of an element in $\Tc(t,e,\mu):=\Tc^*_0(t,e,\mu)$ follows from a straightforward extension of Proposition~\ref{prop:dsm} with initial condition $Z_s=e$ for all $s\leq t$. 

	\vspace{0.5em}

	\noindent $\mathrm{(ii.2)}$ For the induction step, 
    let us construct first an element in $\Tc^*_{n+1}(t,e_k, \mu)$ with $e_k := \delta_{(k, x^k)}.$
	To this end, we take an optimal relaxed control $\P \in \Rc^*(t,x_k,\mu)$ and we define an integer-valued random variable $I$ as follows: 
    \begin{equation*}
     I :=  \int_{(t,\tau]\x \R_+} {\sum_{\ell\in\N} {\ell \,\mathbf{1}_{z\in I_\ell(s,X_s,\Lambda_s)}} \,Q(ds,dz)} = \sum_{\ell\in\N} {\ell \,\mathbf{1}_{U\in I_\ell(\tau,X_\tau,\Lambda_\tau)}},
    \end{equation*}
    where $(\tau,U)$ belongs to the support of $Q$ and for all $(s,x,\kappa)\in[0,T]\x\R^d\x\Pc(A),$
	\begin{equation*}
	 I^\mu_\ell(s,x,\kappa) := \Big[\int_A {\gamma(s,x,\mu_s, a) \,\kappa(da)} \sum_{i=0}^{\ell-1}{p_i(s,x,\mu_s)}, \int_A {\gamma(s,x,\mu_s, a) \,\kappa(da)} \sum_{i=0}^{\ell}{p_i}(s,x,\mu_s)\Big).
	\end{equation*}
    Then we introduce the process $(Z^k_s)_{s\in [0,T]}$ by
	\begin{equation*}
		 Z^k_s := 
		 \begin{cases}
			\delta_{(k, X_s)}, & \text{if } s\in[0,\tau), \\
			\sum_{i=1}^I \delta_{(ki, X_{\tau})}, & \text{if } s\in[\tau,T],
		 \end{cases}
	\end{equation*}
	and the random measure $\bar{\Lambda}_k$ as the pushforward of $\Lambda$ by the map 
\begin{equation*}
 (s,a)\in [0,T]\x A \mapsto (s,\bar{a})\in [0,T]\x\bar{A}, 
 \quad \text{where } \bar{a}(k') = \begin{cases}
  a & \text{if } k'=k, \\
  a_0 & \text{otherwise,}
 \end{cases}
\end{equation*} 
for some fixed $a_0\in A.$
Let us show that the process 
\begin{equation*}
	\Phi_{\bar{\varphi}}(Z^k_{s\wedge\tau}) -  \int_t^{s\wedge\tau} \!\!\int_{\bar{A}} {\cH^{\mu,a}_r {\Phi_{\bar{\varphi}}}(Z^k_r) \,\bar{\Lambda}_k(dr, d\bar{a})},
\end{equation*}
is a $\P$-martingale on $[t,T]$.
As $\P \in \Rc(t,x_k,\mu)$, the only difficulty is to check that
\begin{multline*}
\Phi_{\bar{\varphi}}(Z^k_{s\wedge\tau}) - \Phi_{\bar{\varphi}}(\delta_{(k,X_{s\wedge\tau})}) \\
- \int_t^{s\wedge\tau} \!\! \int_A {\gamma(r,X_r, \mu_r, a) \Big(\sum_{\ell\in\N} { \Phi_{\bar{\varphi}}\Big(\sum_{i=1}^{\ell} \delta_{(ki,X_r)} \Big) p_\ell(r,X_r,\mu_r)  - \Phi_{\bar{\varphi}}\big(\delta_{(k,X_r)}\big) } \Big) \Lambda(da, dr)}
\end{multline*}
is a $\P$--martingale. 
This property follows immediately from the identity
	\begin{equation*}
		\Phi_{\bar{\varphi}}(Z^k_{s\wedge\tau}) - \Phi_{\bar{\varphi}}(\delta_{(k,X_{s\wedge\tau})})
		= \int_{(t,s\wedge\tau]\x\R_+} {\sum_{\ell\in\N} \Big(\Phi_{\bar{\varphi}}\Big(\sum_{i=1}^\ell \delta_{(ki, X_r)}\Big) - \Phi_{\bar{\varphi}}\big(\delta_{(k,X_r)}\big)\Big) \1_{z\in I_\ell(r,X_r,\Lambda_r)} \,Q(dr,dz)}.
	\end{equation*}
In other worlds, if we denote $\Pb_k := \P \circ (Z^k,\bar{\Lambda}_k)^{-1}$, the process $M^{t,\mu,\Phi_{\bar{\varphi}}}$ is a $\Pb_k$--martingale on $[t,\tau_1]$ where $\eta$ is the first branching time given as
\begin{equation}\label{eq:branching-time}
 \eta := \inf \left\{s> 0;\, K_s\neq K_0\right\}.
\end{equation}
	Furthermore, by induction hypothesis and Lemma~\ref{lem:tc_mu_non_empty} below, we can use a measurable selection argument~\cite[Theorem~12.1.10]{stroock-varadhan-79} to ensure existence of a Borel map
   \begin{equation*}
    (s,\tilde{e})\in[0,T]\x E\mapsto \Pb^n_{s,\tilde{e}}\in\Tc^*_n(s,\tilde{e},\mu).
   \end{equation*}
	This allows us to define by 
	concatenation\footnote{The concatenation $\Pb^*_{k} := \Pb_k \otimes_{\eta} \Pb^{n}_{\cdot}$ is defined as the probability measure satisfying
	for all $\phi: \Dc \to \R$ and $\varphi: [0,T]\x \bar{A} \to \R$ bounded,
	\begin{multline*}
		\E^{\Pb^*_{k}}\Big[ \phi(Z) \int_0^T \!\! \int_{\bar{A}} \varphi(t,\bar{a}) \,\bar{\Lambda}(dt, d\bar{a}) \Big]
		:=
		\int_{\Omb} \E^{\Pb^{n}_{\eta(\omb),Z_{\eta}(\omb)}} \Big[ \phi(Z) \int_{\eta(\omb)}^T\int_{\bar{A}} \varphi(t,\bar{a}) \,\bar{\Lambda}(dt, d\bar{a})  \Big] \,\Pb_k(d \omb) \\
		+ 
		\int_{\Omb} \E^{\Pb^{n}_{\eta(\omb), Z_{\eta}(\omb)}} \big[ \phi(Z) \big] \int_0^{\eta(\omb)}\!\! \int_{\bar{A}} \varphi(t,\bar{a}) \,\bar{\Lambda}(\omb)(dt, d\bar{a})  \,\Pb_k(d \omb).
	\end{multline*}
	See, \eg, El Karoui and Tan~\cite[Section 4]{karoui2013capacities} for more details.
}
	$\Pb^*_{k} := \Pb_k \otimes_{\eta} \Pb^{n}_{\cdot}$. In view of~\cite[Theorem 1.2.10]{stroock-varadhan-79}, the process $M^{t, \mu, \Phi_{\bar \varphi}}$ is a $\Pb^*_k$--local martingale.
	It is actually a martingale by Lemma \ref{lem:Nt} since
	\begin{equation*}
	 \sup_{s \in [t,T]} \big|M_{s}^{t, \mu, \Phi_{\bar \varphi}}\big| \leq C\Big(1 + \sup_{s\in[t,T]} {\{N_s\}}\Big).
	\end{equation*}
	We conclude that $\Pb^*_{k} \in \Tc^*_n(t,e_k, \mu)$ by Lemma~\ref{lem:optimal_controlled_branching} below.

	\vspace{0.5em}
	
	\noindent $\mathrm{(ii.3)}$ Next we show existence of an element in $\Tc^*_{n+1}(t,e, \mu)$ with $e := \sum_{k\in K} {e_k}.$
	Let us introduce the probability measure $\Pb^*$ as the pushforward  of $\bigotimes_{k\in K} \Pb^*_{k}$ by the map 
	\begin{equation*}
		 \big(z^k,\bar{\lambda}_k)_{k\in K}\in\Omb^K \mapsto \Big(\sum_{k\in K} {z^k}, \bar{\lambda}\Big)\in\Omb, 
	\end{equation*}
	where $\bar{\lambda}$ is the pushforward of $(\bar{\lambda}^k)_{k\in K}$ by the map 
\begin{equation*}
 \big(s, (\bar{a}_k)_{k\in K} \big)\in [0,T]\x \bar{A}^K \mapsto (s,\bar{a})\in [0,T]\x\bar{A}, 
 \quad \text{where } \bar{a}(k') = \begin{cases}
  \bar{a}_k(k') & \text{if } k'\succeq k, \\
  a_0 & \text{otherwise.}
 \end{cases}
\end{equation*} 	
	 Then we consider the process $Y^k_s := \sum_{k' \in K^k_s} \varphi^{k'}(X^{k'}_s)$ where $K^k_s:=\{k' \in K_s;\, k' \succeq k \}$.
	In view of Remark~\ref{rem:MartingalePb}, it is a $\Pb^*$--semimartingale with characteristics 
	\begin{equation*}
		\big(\bar{B}^k_s, \bar{C}^k_s, \bar{\nu}^k(ds,dy)\big)
		:=
		\sum_{k' \in K^k_s} \big( B^{k'}_s, C^{k'}_s, \nu^{k'}(ds,dy) \big).
	\end{equation*}
	Since $(Y^k)_{k \in K}$ are $\Pb^*$--independent,
	the process $Y_s := \sum_{k \in K} Y^k_s$ is also a $\Pb^*$--semimartingale with characteristics 
	\begin{equation*}
		\big(\bar{B}_s, \bar{C}_s, \bar{\nu}(ds,dy)\big) := \sum_{k \in K} \big(\bar{B}^k_s, \bar{C}^k_s, \bar{\nu}^k(ds,dy)\big)
		= \sum_{k' \in K_s} \big(B^{k'}_s, C^{k'}_s, \nu^{k'}(ds,dy)\big).
	\end{equation*}
	Therefore, $M^{t, \mu, \Phi_{\bar \varphi}}$ is a $\Pb^*$--martingale by Remark~\ref{rem:MartingalePb}. 
	We conclude that $\Pb^*\in\Tc^*_{n+1}(t,e,\mu)$ since for all $k'\succeq k\in K,$
	\begin{equation*}
	 J_{k'}\left(\mu,\Pb^*\right) = J_{k'}\left(\mu,\Pb^*_k\right) = \E\big[v\big(S_{k'},X^{k'}_{S_{k'}},\mu\big)\big].
	\end{equation*}
	
	\vspace{0.5em}
	
	\noindent $\mathrm{(ii.4)}$
	Finally, to construct an element in $\Tc^*_n(\mu)$, 
	it remains to choose a measurable family $(\Pb^n_x)_{x \in \R^d}$ such that $\Pb^n_x \in \Tc^*_n(0, \delta_{(1,x)}, \mu)$. Then one can easily check that
	\begin{equation*}
		\int_{\R^d} {\Pb^n_x \,m_0(dx)} \in \Tc^*_n(\mu).
	\end{equation*}	
	\qed

	\begin{Lemma}\label{lem:tc_mu_non_empty}
	The set--valued map $(t,e)\mapsto\Tc^*_n(t,e,\mu)$  has a closed graph.
	\end{Lemma}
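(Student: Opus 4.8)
The plan is to adapt the proof of Lemma~\ref{lem:closed-graph} to the present setting with varying initial data. Let $(t_m,e_m,\Pb_m)_{m\in\N}$ be a sequence in $[0,T]\x E\x\Pc(\Omb)$ with $\Pb_m\in\Tc^*_n(t_m,e_m,\mu)$ and converging to $(t,e,\Pb)$; I must show $\Pb\in\Tc^*_n(t,e,\mu)$, that is, that the three conditions (a), (b), (c) in the definition of $\Tc^*_n$ hold for $\Pb$. First I would record two preliminary facts. Since $e=\sum_{k\in K}\delta_{(k,x^k)}$ is a fixed finite configuration and $\K$ is discrete, weak convergence $e_m\to e$ in $E\subset\Mc(\K\x\R^d)$ forces, for $m$ large, the label set of $e_m$ to coincide with $K$ and the positions to converge; in particular $K_0$ is eventually the same under $\Pb_m$ and $\Pb$. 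Moreover, since the initial population size $N_0=\#K$ is fixed, the argument of Lemma~\ref{lem:Nt} applies with $\#K$ in place of $1$ and yields a bound $\sup_m\E^{\Pb_m}[\sup_{s\le T}N_s^2]\le C$ uniform in $m$.

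I would establish Condition (b) first. Following part (ii) of the proof of Lemma~\ref{lem:closed-graph}, I pass to the limit in the identity $\E^{\Pb_m}[(M^{t_m,\mu,\Phi_{\bar\varphi}}_s-M^{t_m,\mu,\Phi_{\bar\varphi}}_{s'})\,h]=0$ for $h$ bounded, continuous and $\bar\Fc_{s'}$--measurable. The convergence of the $\Phi_{\bar\varphi}(Z_\cdot)$ terms holds for $s,s'$ outside a countable set via Proposition~VI.3.14 in~\cite{jacod03}, and the convergence of the compensator terms follows from the extension of Corollary~A.5 in~\cite{lacker2015} together with the bound $|\int_{t}^{s}\int_{\bar A}\Hc^{\mu,\bar a}_r\Phi_{\bar\varphi}(Z_r)\,\bar\Lambda(dr,d\bar a)|\le C\sup_{r\le T}N_r$ and the uniform $L^2$ estimate above. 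The only new point is the moving lower endpoint: replacing $\int_{t_m}^{s}$ by $\int_{t}^{s}$ introduces an error $\int_{t\wedge t_m}^{t\vee t_m}\Hc^{\mu,\bar a}_r\Phi_{\bar\varphi}\,\bar\Lambda$ bounded by $C|t-t_m|\sup_{r\le T}N_r\to 0$ in $L^1(\Pb_m)$, which is harmless. Hence $M^{t,\mu,\Phi_{\bar\varphi}}$ is a $\Pb$--martingale for all $\Phi\in\Cc_b^2(\R)$, $\bar\varphi\in\Cc_b^2(\K\x\R^d)$.

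The delicate step is Condition (a), namely $\Pb(Z_s=e,\ s\le t)=1$, and this is where I expect the main obstacle. Using again Proposition~VI.3.14 in~\cite{jacod03}, there is a countable $D_\Pb\subset(0,T)$ such that $\Pb_m\circ Z_s^{-1}\to\Pb\circ Z_s^{-1}$ for $s\notin D_\Pb$. For $s<t$ with $s\notin D_\Pb$ we have $t_m>s$ for $m$ large, so $\Pb_m\circ Z_s^{-1}=\delta_{e_m}\to\delta_e$, whence $\Pb(Z_s=e)=1$; by right--continuity of paths this extends to all $s\in[0,t)$, so that $Z_{t-}=e$ holds $\Pb$--a.s. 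The subtlety is that $Z_t=e$ does not follow from topology alone: the set $\{z\in\Dc;\ z\equiv e\text{ on }[0,t]\}$ is not closed for the Skorokhod topology, since a jump located at $t_m\uparrow t$ may survive in the limit at time $t$. To rule this out I use Condition (b), already established: by Remark~\ref{rem:MartingalePb} the characteristics of $Z$ under $\Pb$ are absolutely continuous in time (the jump compensator is $\sum_{k}\int_A\gamma(\cdots)\sum_{\ell\in\N}p_\ell(\cdots)\,\Lambda^k(\cdots)$ with $\Lambda^k(dt,da)=dt\,\Lambda^k_t(da)$), so $Z$ is quasi--left--continuous and has no fixed time of discontinuity; in particular $\Pb(Z_t\ne Z_{t-})=0$. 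Combined with $Z_{t-}=e$ this gives $Z_t=e$ $\Pb$--a.s., completing Condition (a).

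It remains to check Condition (c), which I would treat exactly as part (iii) of the proof of Lemma~\ref{lem:closed-graph}. For each fixed $k=k'k''$ with $k'\in K_0$ and $k''\in\bigcup_{i=1}^{n-1}\N^i$ (a fixed finite set of labels, since $K_0=K$ is eventually constant), the maps $S_k,\,T_k,\,X^k_{S_k}$ are continuous on $\Omb$ by Proposition~VI.2.7 in~\cite{jacod03}, the value function $(s,x,\bar\mu)\mapsto v(s,x,\mu)$ is continuous, and the cost $J_k(\mu,\cdot)$ passes to the limit via the extension of Corollary~A.5 in~\cite{lacker2015} and the identity $\1_{T_k=T}=\langle Z_T,\1_{\{k\}}\rangle$. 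Passing to the limit in $J_k(\mu,\Pb_m)=\E^{\Pb_m}[v(S_k,X^k_{S_k},\mu)]$ yields $J_k(\mu,\Pb)=\E^{\Pb}[v(S_k,X^k_{S_k},\mu)]$, so that $\Pb\in\Tc^*_n(t,e,\mu)$ and the graph is closed.
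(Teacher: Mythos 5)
Your treatment of conditions (b) and (c), and of the easy half of condition (a) (constancy of $Z$ on $[0,t)$, hence $Z_{t-}=e$ $\Pb$--a.s.), is correct and essentially follows the paper's Lemma~\ref{lem:closed-graph}. The gap lies in the final step of condition (a) --- precisely the point the paper identifies as ``the only difficulty'' --- namely deducing $Z_t=e$ from $Z_{t-}=e$. You invoke quasi-left-continuity of $Z$ under $\Pb$, arguing that condition (b) yields absolutely continuous characteristics and hence no fixed time of discontinuity. But condition (b) only asserts that $M^{t,\mu,\Phi_{\bar{\varphi}}}_s$ is a martingale for $s\in[t,T]$; the resulting semimartingale characteristics describe the jumps of $Z$ on $(t,T]$ and say nothing about the increment $Z_t-Z_{t-}$, since $Z_{t-}$ is a left limit taken from outside the domain of that martingale problem. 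Concretely, a measure $\Pb^{\mathrm{bad}}$ under which $Z\equiv e$ on $[0,t)$, $Z_t=e'\neq e$, and $(Z_s)_{s\in[t,T]}$ solves the martingale problem started from $(t,e')$ satisfies every property you have established for the limit (constancy on $[0,t)$ and the martingale property on $[t,T]$), yet violates $Z_t=e$. So no argument using only the properties of the limit law can close this step; one must exploit the prelimit measures.

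This is what the paper does: under $\Pb_m$ it bounds the probability of a branching event before time $t$, writing the martingale problem for $\Phi=\mathrm{Id}$ and $\bar{\varphi}=\mathbf{1}_k$ to get $\Pb_m(T_k\le t)\le 1-e^{-\|\gamma\|\,|t-t_m|}$ via Gr\"onwall, hence $\Pb_m(\eta\le t)\le C\,|t-t_m|$ where $\eta$ is the first branching time; it then combines this with the generator bound and Cauchy--Schwarz to control the time averages $\frac{1}{h}\E^{\Pb_m}\big[\int_t^{t+h}\Phi_{\bar{\varphi}}(Z_s)\,ds\big]$, and concludes $\E^{\Pb}\big[\Phi_{\bar{\varphi}}(Z_t)\big]=\Phi_{\bar{\varphi}}(e)$ by letting $m\to\infty$ and then $h\to0$. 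To repair your proof you would need to insert an estimate of this quantitative type, showing that under $\Pb_m$ the probability of a jump of $Z$ in a neighbourhood of $t$ vanishes as $m\to\infty$; only then does $\Pb(Z_t\neq Z_{t-})=0$ follow.
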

	
	\begin{proof}
	The proof follows by similar arguments as Lemma~\ref{lem:closed-graph}.
	The only difficulty concerns the initial condition~(a) as the projection $Z_t$ is not continuous for the Skorokhod  topology.
	Consider a sequence $(t_m,e_m,\Pb_m)_{m\in\N}$ converging to $(t,e,\Pb)$ such that $\Pb_m\in\Tc^*_n(t_m, e_m, \mu).$ We aim to prove that $\Pb(Z_{t} = e) = 1.$ Let $\Phi\in\Cc_b^2(\R)$ and $\bar{\varphi}\in\Cc^2_b(\K\x\R^d)$ be chosen arbitrarily. 
	First we observe that 
	\begin{equation*}
	\bigg| \frac{1}{h} \E^{\Pb_m} \Big[ \int_{t}^{t+h} {\Phi_{\bar{\varphi}} (Z_s) \,ds} \Big] - \Phi_{\bar{\varphi}} (e) \bigg| \leq \bigg| \frac{1}{h} \E^{\Pb_m} \Big[ \int_{t}^{t+h} {\big(\Phi_{\bar{\varphi}} (Z_s) - \Phi_{\bar{\varphi}} (e_m) \big)\,ds} \Big]  \bigg| + \big|\Phi_{\bar{\varphi}} (e_m) - \Phi_{\bar{\varphi}} (e)\big|.
	\end{equation*}
	Then, denoting by $\eta$ the first branching time as in~\eqref{eq:branching-time},  it follows by the martingale property and Lemma~\ref{lem:Nt} that
	\begin{equation*}
	 \bigg| \frac{1}{h} \E^{\Pb_m} \Big[ \int_{t}^{t+h} {\big(\Phi_{\bar{\varphi}} (Z_s) - \Phi_{\bar{\varphi}} (e_m)\big) \,ds}\,  \mathbf{1}_{\eta > t}\Big] \bigg| \leq C h.
	\end{equation*}
	Notice that $\Pb_m(\eta>t)=1$ whenever $t_m\geq t$ by definition of $\Tc^*_n(t_m, e_m, \mu).$ In the case $t_m<t,$ we can use further Cauchy--Schwartz Inequality and Lemma~\ref{lem:Nt} to obtain that
	\begin{equation*}
	 \bigg| \frac{1}{h} \E^{\Pb_m} \Big[ \int_{t}^{t+h} {\big(\Phi_{\bar{\varphi}} (Z_s) - \Phi_{\bar{\varphi}} (e_m)\big) \,ds}\,  \mathbf{1}_{\eta \leq t}\Big] \bigg| \leq C \sqrt{\Pb_m\left(\eta\leq t\right)}.
	\end{equation*}	
	In addition, it holds
	\begin{equation*}
	\Pb_m\left(\eta\leq t\right) \leq \sum_{k\in K_m} \Pb_m\left(T_k\leq t\right).
	\end{equation*}
	Writing the martingale problem for $\Phi=\mathrm{Id}$ and $\bar{\varphi}=\mathbf{1}_{k}$ with $k\in K_m$ and taking expectation, we obtain that 
	\begin{equation*}
	\Pb_m\left(T_k> t\right) = 1 - \int_{t_m}^t {\E^{\Pb_m}\Big[\int_A {\gamma\left(s,X_s,\mu_s,a\right)}\,\Lambda^k_s(da)\,\mathbf{1}_{T_k>s}\Big] \, ds}
	\end{equation*}
	We deduce by the differential form of Gr\"onwall's lemma that  
	\begin{equation*}
	\Pb_m\left(T_k> t\right) \geq e^{-\|\gamma\| (t_m-t)}.
	\end{equation*}	
	We conclude that 
	\begin{equation*}
	 \bigg| \frac{1}{h} \E^{\Pb_m} \Big[ \int_{t}^{t+h} {\Phi_{\bar{\varphi}} (Z_s) \,ds} \Big] - \Phi_{\bar{\varphi}} (e) \bigg| \leq C \left(h + \sqrt{(t_m-t)_+}\right) + \big|\Phi_{\bar{\varphi}} (e_m) - \Phi_{\bar{\varphi}} (e)\big|,
	\end{equation*}	
	which yields the desired by letting $m\to\infty$ and $h\to 0.$
	\qed
\end{proof}

\begin{Lemma}\label{lem:optimal_controlled_branching}
 The probability measure $\Pb\in\Tc^*(\mu)$ if and only if $\Pb\in\Tc(\mu)$ and  for all $k\in\K,$
 \begin{equation*}
 		\E^{\Pb}\Big[ \int_{S_k}^{T_k} \!\! \int_A {f\big(s, X^k_s, \mu_s, a\big) \,\Lambda^k(ds,da)} + g\big(X^k_T,\mu_T\big) \mathbf{1}_{T_k= T} ~\Big|\, \bar{\Fc}_{S_k} \Big] = v\big(S_{k}, X^k_{S_k},\mu\big).
 \end{equation*}
\end{Lemma}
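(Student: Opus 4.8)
The plan is to prove the two implications separately, the forward one carrying essentially all the content. The \emph{if} direction is immediate: assuming $\Pb\in\Tc(\mu)$ together with the stated conditional identity for every $k\in\K$, I would take $\Pb$--expectations and use the tower property to recover $J_k(\mu,\Pb)=\E^{\Pb}\big[v(S_k,X^k_{S_k},\mu)\big]$, which is precisely the defining condition of $\Tc^*(\mu)$ in~\eqref{eq:def_T_star}.

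For the \emph{only if} direction, suppose $\Pb\in\Tc^*(\mu)$. The heart of the argument is the conditional lower bound
\begin{equation*}
\E^{\Pb}\Big[\int_{S_k}^{T_k}\!\!\int_A f(s,X^k_s,\mu_s,a)\,\Lambda^k(ds,da)+g(X^k_T,\mu_T)\mathbf{1}_{T_k=T}\ \Big|\ \bar{\Fc}_{S_k}\Big]\ \geq\ v\big(S_k,X^k_{S_k},\mu\big),\qquad \Pb\text{-a.s.}
\end{equation*}
To establish it I would work with a regular conditional probability distribution $(\Pb_{\bar\om})$ of $\Pb$ given $\bar{\Fc}_{S_k}$, and show that for $\Pb$--almost every $\bar\om$ the law under $\Pb_{\bar\om}$ of the single particle $k$ --- namely the triple formed by its position $X^k$ (frozen at $X^k_{S_k}(\bar\om)$ before $S_k(\bar\om)$), its control $\Lambda^k$, and the point process recording its death --- induces an element of $\Rc\big(S_k(\bar\om),X^k_{S_k}(\bar\om),\mu\big)$. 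Granting this identification, the conditional cost equals $J(S_k,X^k_{S_k},\mu,\cdot)$ evaluated at that relaxed control, hence is bounded below by $v(S_k,X^k_{S_k},\mu)$ by the infimum in~\eqref{eq:valuefunction}.

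The main obstacle is exactly this identification of the conditioned single-particle law as a relaxed control rule. I would verify Condition~(ii) of Definition~\ref{def:single_relax_ctrl} by specializing the generator identity of Definition~\ref{def:tree}(ii) to test functions $\Phi_{\bar\varphi}$ supported on the label $k$ (that is, $\varphi^{k'}\equiv 0$ for $k'\neq k$), which isolates the diffusive generator $\Lc^{\mu,a}$ acting on $X^k$, combined with the conditioning principle for martingale problems beyond a stopping time, Stroock and Varadhan~\cite[Theorem~1.2.10]{stroock-varadhan-79}. The genuinely delicate point is Condition~(iii): the driving Poisson random measure $Q$ is not directly observable from $Z$, so I would reconstruct it on a suitable enlargement through the representation theorem of Ikeda and Watanabe~\cite[Theorem~II.7.4]{ikeda89}, exactly as in the proof of Lemma~\ref{lem:Nt}, arranging that the first killing jump of $Q$ relative to the rate $\int_A\gamma(s,X^k_s,\mu_s,a)\,\Lambda^k_s(da)$ reproduces the death time $T_k$; the offspring produced at $T_k$ play no role in $J(S_k,X^k_{S_k},\mu,\cdot)$ and can be discarded.

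Finally, I would combine the bound with optimality. Taking $\Pb$--expectations in the conditional lower bound and using the tower property gives $J_k(\mu,\Pb)\geq\E^{\Pb}\big[v(S_k,X^k_{S_k},\mu)\big]$, while membership in $\Tc^*(\mu)$ forces equality in~\eqref{eq:def_T_star}. Since the conditional expectation is $\Pb$--a.s.\ no smaller than $v(S_k,X^k_{S_k},\mu)$ yet the two integrals coincide, the two integrable random variables must agree $\Pb$--a.s., which is the claimed identity. I expect no further difficulty here beyond the elementary fact that an integrable random variable dominating another of equal mean is almost surely equal to it.
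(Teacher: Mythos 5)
Your proposal is correct and follows essentially the same route as the paper's proof: conditioning on $\bar{\Fc}_{S_k}$, identifying the single-particle conditional law as an element of $\Rc(S_k,X^k_{S_k},\mu)$ via test functions $\Phi_{\bar{\varphi}}$ supported on the label $k$ together with the Ikeda--Watanabe representation theorem to reconstruct the driving Poisson measure, and then combining the pointwise comparison with $v$ and the equality of expectations from~\eqref{eq:def_T_star}. Your inequality $\geq$ is in fact the correct direction since $v$ is an infimum; the $\le$ displayed at the corresponding point of the paper's proof appears to be a sign slip that is immaterial to the argument.
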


\begin{proof}
	Let $k \in \K$ and $\Pb \in \Tc(\mu)$,
	denote by $(\Pb_{\omb})_{\omb \in \Omb}$ a family of conditional probability measures of $\Pb$ w.r.t. $\bar{\Fc}_{S_k}$.
	Then, for $\Pb$--a.e. $\omb \in \Omb$, the process $(M^{\mu,\Phi_{\bar{\varphi}}}_s)_{s \in[S_k(\omb), T]}$ is a $\Pb_{\omb}$--martingale for all $\Phi \in \Cc^2_b(\R)$ and $\bar{\varphi}\in\cC^2_b(\K\x\R^d)$.
	By setting  $\Phi(x) = x$, $\varphi^k = 1 $ and $\varphi^{k'} = 0$ otherwise,
	it follows that the process $t \mapsto \1_{t<T_k}$ is non-increasing with $\Pb_{\omb}$--compensator 
	\begin{equation*}
	 -\int_{S_k(\omb)}^t \int_A {\gamma(s, X_s,\mu_s, a) \1_{s < T_k}}\, \Lambda^k(ds, da).
	\end{equation*}
	By the representation theorem of semimartingales~\cite[Theorem II.7.4]{ikeda89}, there exists a Poisson random measure $Q^k(dt,dz)$ on $[S_k(\omb),T] \x \R_+$ with compensator $dt \, dz$  such that
	\begin{equation*}
		T_k = \inf\Big\{ s > S_k(\omb);\, Q^k\Big(\{s\} \x \Big[0, \int_A \gamma(s, X_s, \mu_s, a) \Lambda^k_s(da) \Big]\Big) = 1 \Big\},\quad \Pb_{\omb} \mbox{--a.s.}
	\end{equation*}
	Then considering again the martingale problem with $\Phi(x) = x$, $\varphi^k = \varphi \in C^2_b(\R^d)$ 
	and $\varphi^{k'} = 0$ otherwise,
	we observe that the law of $(X^k, \Lambda^k, Q^k)$ under $\Pb_{\omb}$ induces an element of $\Rc(S_k(\omb),X_{S_k}(\omb),\mu)$.
	In particular, it holds
	\begin{equation*}
		\E^{\Pb}\Big[ \int_{S_k}^{T_k} \!\! \int_A {f\big(s, X^k_s, \mu_s, a\big) \,\Lambda^k(ds,da)} + g\big(X^k_T,\mu_T\big) \mathbf{1}_{T_k= T} ~\Big|\, \bar{\Fc}_{S_k} \Big] \le v\big(S_{k}, X^k_{S_k},\mu\big).
	\end{equation*}
	Thus, if $\Pb\in\Tc^*(\mu)$, it follows from the optimality constraint~\eqref{eq:def_T_star} that
	\begin{equation*}
		\E^{\Pb}\Big[ \int_{S_k}^{T_k} \!\! \int_A {f\big(s, X^k_s, \mu_s, a\big) \,\Lambda^k(ds,da)} + g\big(X^k_T,\mu_T\big) \mathbf{1}_{T_k= T} ~\Big|\, \bar{\Fc}_{S_k} \Big] = v\big(S_{k}, X^k_{S_k},\mu\big).
	\end{equation*}
\qed
\end{proof}

\begin{appendix}
\section{Proof of Lemma~\ref{lem:technicpde}}
\label{app:proof-lemma}

Throughout this section, $C$ denotes a positive constant depending solely on $T$, $d$, $\|\gamma\|$, $\|\sum_{\ell\in\N}{\ell p_{\ell}}\|$ which may change from line to line.

\no\rmi In view of Proposition~\ref{prop:def}, it holds
    \begin{equation}\label{eq:moment_app}
     m(t)\big(\R^d\big)= \E\left[N_t\right] \leq C.
    \end{equation}
Hence it suffices to show that
\begin{equation}\label{eq:moment_fp}
 \int_{\R^d} {|x|^2\, m(t)(dx)} \leq C\left(1 + \|b\|^2 + \int_{\R^d} {|x|^2\, m_0(dx)}\right).
\end{equation}
Denote
\begin{equation*}
 M_t := \sum_{k\in K_t} \!\! {\big|X^k_t\big|^2} -  \left|X_0\right|^2 
	- \int_0^{t} {\!\!\sum_{k\in K_s} \Big( 2 d + 2 b(s,X^k_s)\cdot X_s^k + \gamma(X^k_s)\sum_{\ell\in\N}{(\ell-1)p_{\ell}(X^k_s)} \big|X^k_s\big|^2 \Big) ds}.
\end{equation*}
It follows from Proposition~\ref{prop:dsm} that $M$ is a local martingale with localizing sequence of stopping times
\begin{equation*}
 \tau_n:=\inf{\Big\{t\geq 0;\  \sup_{k\in K_t}|X_t^k|\geq n\Big\}}.
\end{equation*}
Furthermore, a straightforward calculation yields that 
\begin{equation*}
 	\sum_{k\in K_{t\wedge\tau_n}} \!\! {\big|X^k_{t\wedge\tau_n}\big|^2}
	\leq  \left|X_0\right|^2 + \int_0^{t\wedge\tau_n} {\!\!\sum_{k\in K_s} \Big(2d + \|b\|^2 + \Big(1+\Big\|\gamma\sum_{\ell\in\N}{\ell p_{\ell+1}}\Big\|\Big) \big|X^k_s\big|^2 \Big)\,ds} + M_{t\wedge\tau_n}.
\end{equation*}
Taking expectation and using~\eqref{eq:moment_app}, we obtain
\begin{equation*}
 \E\bigg[\sum_{k\in K_{t\wedge\tau_n}} {\big|X^k_{t\wedge\tau_n}\big|^2}\bigg] \leq C\bigg(1 + \left\|b\right\|^2 + \E\big[|X_0|^2\big] + \int_0^{t} {\E\bigg[ \sum_{k\in K_{s\wedge\tau_n}} \big|X^k_{s\wedge\tau_n}\big|^2\bigg] \,ds}\bigg).
\end{equation*}
By Gr\"onwall's lemma, we deduce that
\begin{equation*}
 \E\bigg[\sum_{k\in K_{t\wedge\tau_n}} {\big|X^k_{t\wedge\tau_n}\big|^2}\bigg] \leq  C\Big(1 + \left\|b\right\|^2 + \E\big[|X_0|^2\big]\Big).
\end{equation*}
The conclusion then follows from Fatou's lemma.

\no\rmii Let us show next that for all $0\leq t\leq s\leq T$, 
\begin{equation}
 W_1\big(m(t),m(s)\big)\leq C\left(1 + \|b\|^2 + \int_{\R^d} {|x|^2\,  m_0(dx)}\right)\sqrt{s-t}.
\end{equation}
Using Kantorovitch's duality (see Lemma~\ref{lem:duality}), we have
\begin{eqnarray*}
 	W_1(m(t),m(s)) &=& \sup{\left\{\int_{\R^d} {\varphi(x)\, m(t)(dx)} - \int_{\R^d} {\varphi(x)\, m(s)(dx)}\right\}}
 	~+~\left|m(t)(\R^d)-m(s)(\R^d)\right|,
\end{eqnarray*}
where the supremum is taken over the set of all $1$-Lipschitz continuous maps $\varphi:\R^d\to\R$ such that $\varphi(0)=0$. For the second term on the r.h.s., we observe that
\begin{equation*}
 \E[N_s] = \E[N_t] + \int_t^s {\E\left[\sum_{k\in K_r} \gamma(X^k_r)\sum_{\ell\in\N}(\ell-1)p_\ell(X^k_r)\right] \,dr},
\end{equation*}
We deduce by~\eqref{eq:moment_app} that
\begin{equation*}
 \big|m(t)(\R^d)-m(s)(\R^d)\big| = \big|\E[N_t] - \E[N_s]\big|  \leq C (t-s),
\end{equation*}
As for the first term, let $\varphi:\R^d\to\R$ be an arbitrary
$1$-Lipschitz continuous map  satisfying $\varphi(0)=0$. It holds
\begin{equation}\label{eq:kantorovitch}
\left|\int_{\R^d} {\varphi(x)\, m(t)(dx)} - \int_{\R^d} {\varphi(x)\, m(s)(dx)}\right|
 = \bigg|\E\bigg[\sum_{k\in K_t} \varphi\big(X^k_t\big) - \sum_{k\in K_s} \varphi\big(X^k_s\big)\bigg]\bigg|. 
\end{equation}
We decompose the integrand on the r.h.s. as follows:
\begin{multline*}
 \sum_{k\in K_t} \varphi\big(X^k_t\big) - \sum_{k\in K_s} \varphi\big(X^k_s\big) = \sum_{k\in K_t\cap K_s} {\Big(\varphi\big(X^k_t\big) - \varphi\big(X^k_s\big)\Big)}
 + \sum_{k\in K_t\setminus K_s} {\varphi\big(X^k_t\big)} - \sum_{k\in K_s\setminus K_t} {\varphi\big(X^k_s\big)}.
\end{multline*}
First we observe that 
\begin{equation*}
 \bigg|\E\bigg[\sum_{k\in K_t\cap K_s} {\Big(\varphi\big(X^k_t\big) - \varphi\big(X^k_s\big)\Big)}\bigg]\bigg| 
 \leq \E\bigg[\sum_{k\in K_t} {\big| X^k_t - X^k_s \big|}\bigg], 
\end{equation*}
where, using the notations of Section~\ref{sec:branching-diffusion},
\begin{equation*}
X^k_s = X^{k}_t + \int^s_t b(r,X^k_r) \,dr + \sqrt{2}\big(B^k_s-B^k_t\big), \quad \P-\text{a.s.}
\end{equation*}
In particular, we have 
\begin{equation*}
 \E\Big[\big| X^k_t - X^k_s \big|\,\Big|\,\Fc_t\Big] \leq C(1+\|b\|)\sqrt{s-t}.
\end{equation*}
It then follows by~\eqref{eq:moment_app} that
\begin{equation*}
\bigg|\E\bigg[\sum_{k\in K_t\cap K_s} {\Big(\varphi\big(X^k_t\big) - \varphi\big(X^k_s\big)\Big)}\bigg]\bigg| \leq C\big(1+\|b\|\big) \sqrt{s-t}
\end{equation*}
Second, we observe that 
\begin{equation*}
 \bigg|\E\bigg[\sum_{k\in K_t\setminus K_s} {\varphi\big(X^k_t\big)}\bigg]\bigg| 
 \leq \E\bigg[\sum_{k\in \K} {\big|X^k_t\big|\mathbf{1}_{\{k\in K_t\setminus K_s\}}}\bigg].
\end{equation*}
In addition, we have 
\begin{equation}\label{eq:survival}
 \P\Big(k\in K_t\setminus K_s \,\Big|\, \Fc_t\Big) \leq \left(1 - e^{-\|\g\| (s-t)}\right)\mathbf{1}_{k\in K_t} \leq \|\g\| (s-t) \mathbf{1}_{k\in K_t},
\end{equation}
and it follows from Step~(i) above that 
\begin{equation}\label{eq:firstmoment}
 \E\bigg[\sum_{k\in K_t} {\big|X^k_t\big|}\bigg] \leq \frac{1}{2}\E\bigg[\sum_{k\in K_t} {\Big(1+\big|X^k_t\big|^2\Big)}\bigg] \leq C \Big(1+\|b\|^2+\E\big[|X_0|^2\big]\Big).
\end{equation}
Thus we deduce that 
\begin{equation*}
 \bigg|\E\bigg[\sum_{k\in K_t\setminus K_s} {\varphi\big(X^k_t\big)}\bigg]\bigg| \leq C \Big(1+\|b\|^2+\E\big[|X_0|^2\big]\Big) (s-t).
\end{equation*}
Third, we observe that
\begin{equation*}
 \bigg|\E\bigg[\sum_{k\in K_s\setminus K_t} {\varphi\big(X^k_s\big)}\bigg]\bigg| 
 \leq \E\bigg[\sum_{k\in K_t} {\mathbf{1}_{T_k\leq s} \sum_{\substack{ k'\in K_s \\ k'\succ k}}{\big|X^{k'}_s\big|}}\bigg].
\end{equation*}
In addition, similar to~\eqref{eq:firstmoment}, it holds
\begin{equation*}
 \E\bigg[\sum_{\substack{ k'\in K_s \\ k'\succ k}}{\big|X^{k'}_s\big|} \,\bigg|\,\Fc_{T_k}\bigg] \leq C \Big(1+ \|b\|^2 + \big|X^k_{T_k}\big|^2 \Big)
\end{equation*}
It follows that
\begin{align*}
 \bigg|\E\bigg[\sum_{k\in K_s\setminus K_t} {\varphi\big(X^k_s\big)}\bigg]\bigg| 
 & \leq C \E\bigg[\sum_{k\in K_t} {\mathbf{1}_{T_k\leq s} \Big(1+ \|b\|^2 + \big|X^k_{T_k}\big|^2 \Big)}\bigg] \\
 & \leq C \E\bigg[\sum_{k\in K_t\setminus K_s} {\Big(1+ \|b\|^2 + \big|X^k_t\big|^2  + \big|X^k_{T_k}-X^k_t\big|^2  \Big)}\bigg].
\end{align*}
To conclude, it remains to observe that, on the one hand, it follows by~\eqref{eq:survival}--\eqref{eq:firstmoment} that
\begin{equation*}
 \E\bigg[\sum_{k\in K_t\setminus K_s} {\Big(1+ \|b\|^2 + \big|X^k_t\big|^2\Big)}\bigg]\leq C\Big(1+\|b\|^2+\E\big[|X_0|^2\big]\Big)(s-t),
\end{equation*}
and, on the other hand, it holds
\begin{equation*}
 \E\bigg[\sum_{k\in K_t\setminus K_s} { \big|X^k_{T_k}-X^k_t\big|^2}\bigg] \leq \E\bigg[\sum_{k\in K_t} { \big|X^k_{T_k\wedge s}-X^k_t\big|^2}\bigg] \leq C\big(1+\|b\|^2\big)(s-t),
\end{equation*}
since
\begin{equation*}
 \E\Big[\big|X^k_{T_k\wedge s}-X^k_t\big|^2\,\Big|\,\Fc_t\Big] \leq C\big(1+\|b\|^2\big)(s-t).
\end{equation*}

\section{Wasserstein Distance for Finite Measures}
\label{sec:WasDis_App}

\def\Eb{\bar{E}}
\def\mub{\bar \mu}
\def\nub{\bar \nu}
\def\Xb{\bar{X}}

	Let $(X, d)$ be a nonempty Polish space and denote by $\Mc_1(X)$ the collection of all finite non-negative Borel measures on $X$ such that $\int_{X} d(x,x_0) \mu(dx) < \infty$ for some (and thus all) $x_0\in X$. We aim to define a Wasserstein type distance on $\Mc_1(X)$.
	
	To this end, we introduce a cemetery point $\partial$ to obtain an enlarged space $\Xb := X \cup \{\partial\}$.
	By defining $d(x, \partial) := d(x, x_0) + 1$, we extend the distance $d$ on $\Xb$ in such a way that $(\Xb, d)$ is still a Polish space.
	Next we introduce the classical Wasserstein distance on 
		\begin{equation*}
			\Mc_{1,m}(\Xb) := \{ \mu \in \Mc_1(\Xb);\ \mu(\Xb) = m \},
		\end{equation*}	
		as follows
	\begin{equation*}
		W_{1,m}(\mu, \nu)
		:= 
		\inf_{\pi \in \Pi(\mu, \nu)} \int_{\Xb \x \Xb} d(x,y) \,\pi(dx, dy),
		\quad \forall\, \mu, \nu \in \Mc_{1,m}(\Xb),
	\end{equation*}
	where $\Pi(\mu, \nu)$ denotes the collection of all non-negative measures on $\Xb \x \Xb$ with marginals $\mu$ and $\nu$.
	
	The Wasserstein type distance on $\Mc_1(X)$ is then defined as follows: 
	\begin{equation} \label{eq:Wp}
		W_1(\mu, \nu) := W_{1,m}(\mub_m, \nub_m),
		\qquad\forall\, \mu, \nu \in \Mc_1(X),
	\end{equation}
	where $m \ge \mu(X) \vee \nu(X)$ and 
	\begin{equation*} \label{eq:mu2mum}
		\mub_m(\cdot) := \mu ( \cdot \cap X) + \big(m - \mu(X) \big) \delta_{\partial}(\cdot),
		 \quad	
		\nub_m(\cdot) := \nu ( \cdot \cap X) + \big(m - \nu(X) \big) \delta_{\partial}(\cdot).
	\end{equation*}
	 As shown in the next lemma, this definition does not depend on the choice of $m$.

	\begin{Lemma}\label{lem:duality}
	The following dual representation holds: for all $\mu,\nu\in\Mc_1(X)$,
		\begin{equation*} \label{eq:W1Dual_App}
			W_1(\mu, \nu) 
			= 
			\sup_{\varphi \in \mathrm{Lip}^0_1(X)} \big\{ \mu(\varphi) - \nu(\varphi) \big\}
			+ \big| \mu(X) - \nu(X) \big|,			
		\end{equation*}
		where $\mathrm{Lip}^0_1(X)$ denote the collection of all functions $\varphi: X \to \R$ with Lipschitz constant smaller or equal to $1$ and such that $\varphi(x_0) = 0$. 
	\end{Lemma}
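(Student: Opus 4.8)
The plan is to deduce the identity from the classical Kantorovich--Rubinstein duality applied on the enlarged Polish space $\Xb = X \cup \{\partial\}$. Since $d(x,\partial) = d(x,x_0)+1$ and $\mu,\nu \in \Mc_1(X)$, both $\mub_m$ and $\nub_m$ have finite first moment on $\Xb$, and by construction they share the common total mass $m$. Thus, after normalising by $m$ to probability measures and invoking Kantorovich--Rubinstein, one obtains
\[
 W_{1,m}(\mub_m, \nub_m) = \sup_{\psi \in \mathrm{Lip}_1(\Xb)} \Big\{ \int_{\Xb} \psi \, d\mub_m - \int_{\Xb} \psi \, d\nub_m \Big\},
\]
where $\mathrm{Lip}_1(\Xb)$ denotes the collection of $1$-Lipschitz maps $\psi : \Xb \to \R$ (the degenerate case $m=0$, where $\mu=\nu=0$, being trivial).

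Next I would compute the two integrals explicitly. Writing $\varphi := \psi|_X$ and $v := \psi(\partial)$ and using $\mub_m = \mu(\cdot \cap X) + (m - \mu(X))\delta_\partial$, the objective becomes
\[
 \mu(\varphi) - \nu(\varphi) + \big(\nu(X) - \mu(X)\big)\, v .
\]
In particular the parameter $m$ has cancelled, which confirms that the definition~\eqref{eq:Wp} is independent of the choice of $m$. It then remains to optimise over admissible pairs $(\varphi, v)$: here $\varphi$ is an arbitrary $1$-Lipschitz map on $X$ and $v \in \R$, subject to the compatibility constraint $|\varphi(x) - v| \le d(x,\partial) = d(x,x_0)+1$ for all $x \in X$. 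This constraint is exactly what makes $\psi$ genuinely $1$-Lipschitz on all of $\Xb$, the remaining Lipschitz pairs (both lying in $X$) being controlled by $\varphi$.

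The crux of the argument is the following normalisation. Because $\mub_m$ and $\nub_m$ have equal mass, the objective is invariant under $\psi \mapsto \psi + c$, that is under $(\varphi,v) \mapsto (\varphi + c, v + c)$; hence I may assume $\varphi(x_0) = 0$, i.e.\ $\varphi \in \mathrm{Lip}^0_1(X)$. Under this normalisation $|\varphi(x)| \le d(x,x_0)$, so the compatibility constraint reduces to $|v| \le 1$ (sufficiency by the triangle inequality, necessity by taking $x = x_0$). The supremum therefore factorises as
\[
 \sup_{\varphi \in \mathrm{Lip}^0_1(X)} \big\{ \mu(\varphi) - \nu(\varphi) \big\} + \sup_{|v| \le 1} \big( (\nu(X) - \mu(X))\, v \big) = \sup_{\varphi \in \mathrm{Lip}^0_1(X)} \big\{ \mu(\varphi) - \nu(\varphi) \big\} + \big| \mu(X) - \nu(X) \big|,
\]
which is the claimed identity. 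I expect the main obstacle to be precisely this last reparametrisation: correctly isolating the freedom in $\psi(\partial)$ so that it decouples from $\varphi$ and produces exactly the mass-defect term $|\mu(X) - \nu(X)|$, together with the verification that every pair $(\varphi, v)$ with $\varphi \in \mathrm{Lip}^0_1(X)$ and $|v| \le 1$ extends to a genuinely $1$-Lipschitz function on $\Xb$.
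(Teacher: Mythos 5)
Your proposal is correct and follows essentially the same route as the paper: Kantorovich--Rubinstein duality on the enlarged space $\Xb$ applied to the equal-mass extensions $\mub_m,\nub_m$, followed by the observation that a $1$-Lipschitz $\psi$ on $\Xb$ normalised at $x_0$ corresponds exactly to a pair $(\varphi,v)$ with $\varphi\in\mathrm{Lip}^0_1(X)$ and $|v|=|\psi(\partial)|\le 1$, so that the supremum decouples into the two stated terms. Your explicit verification of the extension constraint $|\varphi(x)-v|\le d(x,x_0)+1$ is a welcome elaboration of a step the paper only asserts.
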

	
	\begin{proof}
	By Kantorovitch duality on $\Mc_{1,m}(\Xb)$, it holds 
		\begin{equation*}
			W_1(\mu, \nu) = W_{1,m}(\mub_m, \nub_m) = \sup_{\varphi \in \mathrm{Lip}^0_1(\Xb)} \big\{ \mub_m(\varphi) - \nub_m(\varphi) \big\}.
		\end{equation*}
		See, \eg, Villani~\cite[Remark 6.5]{villani09}. 
		Additionally, we have 
		\begin{equation*}
		\mub_m(\varphi) - \nub_m(\varphi) = \mu(\varphi|_X) - \nu(\varphi|_X) + \varphi(\delta)(\mu(X)-\nu(X))
		\end{equation*}
		To conclude, it remains to observe that $\varphi$ belongs to $\mathrm{Lip}^0_1(\Xb)$ if and only if $\varphi|_X$  belongs to $\mathrm{Lip}^0_1(X)$ and $|\varphi(\partial)|\leq 1$.
    \qed
	\end{proof}
	
	Next we show that $W_1$ defines a metric on the space of finite non-negative measures and collect important topological properties.

	\begin{Lemma} \label{lemm:Wp_App}		
	 $(\Mc_1(X),W_1)$ is a Polish space. Additionally, a sequence $(\mu_n)_{n\in\N}$ converges to $\mu$ in  $\Mc_1(X)$  if and only if, for all $\varphi: X \to \R$ continuous satisfying $|\varphi(x)| \le C(1+d(x, x_0))$,
		\begin{equation*}
			\int_X {\varphi(x) \,\mu_n(dx)} ~\longrightarrow~ \int_X {\varphi(x) \,\mu(dx)}.
		\end{equation*}
	\end{Lemma}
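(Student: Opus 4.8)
The plan is to reduce every assertion to the corresponding well-known fact about the classical $1$-Wasserstein space over the Polish space $\Xb = X \cup \{\partial\}$. The crucial observation is that, for each fixed $m>0$, the map $\iota_m \colon \mu \mapsto \mub_m$ is a bijection from $B_m := \{\mu \in \Mc_1(X);\ \mu(X) \le m\}$ onto $\Mc_{1,m}(\Xb)$: its inverse sends $\bar\rho \in \Mc_{1,m}(\Xb)$ to its restriction $\bar\rho|_X$, which lies in $\Mc_1(X)$ with mass $\le m$ since $\int_X d(x,x_0)\,d(\bar\rho|_X) \le \int_{\Xb} d(x,x_0)\,d\bar\rho < \infty$. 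By the very definition~\eqref{eq:Wp}, $\iota_m$ is moreover an isometry from $(B_m, W_1)$ onto $(\Mc_{1,m}(\Xb), W_{1,m})$. Since $\Mc_{1,m}(\Xb)$ is, up to the scaling factor $m$, the classical $1$-Wasserstein space of probability measures on the Polish space $\Xb$, it is itself a Polish space, see~\cite{villani09}.

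First I would record that $W_1$ is a genuine metric. Symmetry, the triangle inequality and the identity of indiscernibles can all be read off the dual formula of Lemma~\ref{lem:duality}; alternatively, fixing any $m$ larger than the masses of the (finitely many) measures involved, they follow from the corresponding properties of $W_{1,m}$ through the isometry $\iota_m$, the point being that Lemma~\ref{lem:duality} guarantees that $W_1$ does not depend on this choice of $m$. Separability of $(\Mc_1(X),W_1)$ follows from $\Mc_1(X) = \bigcup_{m \in \N} B_m$, each $B_m$ being separable as an isometric copy of $\Mc_{1,m}(\Xb)$. For completeness, given a $W_1$-Cauchy sequence $(\mu_n)$, the bound $|\mu_n(X)-\mu_p(X)| \le W_1(\mu_n,\mu_p)$ (take $\varphi=0$ in Lemma~\ref{lem:duality}) shows that $(\mu_n(X))$ is Cauchy in $\R$, hence bounded by some $m$; thus $(\iota_m \mu_n)$ is Cauchy in the complete space $\Mc_{1,m}(\Xb)$ and converges to some $\bar\rho$, and $\mu := \bar\rho|_X \in B_m$ satisfies $W_1(\mu_n,\mu) = W_{1,m}(\iota_m\mu_n,\bar\rho) \to 0$.

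It remains to establish the convergence characterization, which I would again transfer from the classical statement (e.g.~\cite{villani09}): $W_{1,m}(\bar\rho_n,\bar\rho) \to 0$ if and only if $\int_{\Xb} \psi\, d\bar\rho_n \to \int_{\Xb}\psi\,d\bar\rho$ for every continuous $\psi \colon \Xb \to \R$ with $|\psi| \le C(1 + d(\cdot,x_0))$. The translation between test functions on $X$ and on $\Xb$ is clean because $d(x,\partial) = d(x,x_0)+1 \ge 1$ makes $\partial$ an isolated point, so any continuous $\varphi$ on $X$ of linear growth extends to a continuous $\psi$ on $\Xb$ of linear growth by setting $\psi(\partial)$ arbitrarily, while conversely $\psi|_X$ inherits continuity and the growth bound. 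For the direct implication, assume $W_1(\mu_n,\mu)\to 0$; then $\mu_n(X)\to\mu(X)$, so $\mu_n,\mu \in B_m$ for $m$ large, and choosing $\psi|_X = \varphi$ with $\psi(\partial)=0$ gives $\int_X\varphi\,d\mu_n = \int_{\Xb}\psi\,d(\iota_m\mu_n) \to \int_{\Xb}\psi\,d(\iota_m\mu) = \int_X\varphi\,d\mu$. Conversely, if $\int_X\varphi\,d\mu_n \to \int_X\varphi\,d\mu$ for all such $\varphi$, then taking $\varphi\equiv 1$ yields $\mu_n(X)\to\mu(X)$, hence $\mu_n,\mu\in B_m$ for $m$ large; for any continuous $\psi$ on $\Xb$ of linear growth one computes $\int_{\Xb}\psi\,d(\iota_m\mu_n) = \int_X \psi|_X\,d\mu_n + \psi(\partial)(m-\mu_n(X))$, and both terms converge, whence $\int_{\Xb}\psi\,d(\iota_m\mu_n)\to \int_{\Xb}\psi\,d(\iota_m\mu)$ and the classical criterion gives $W_1(\mu_n,\mu) = W_{1,m}(\iota_m\mu_n,\iota_m\mu)\to 0$.

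The only genuinely delicate point, and the one I would treat most carefully, is the bookkeeping forced by the variable total mass: the distance $W_1$ is defined through an auxiliary level $m$, so one must consistently choose $m$ large enough to dominate all the masses at play and invoke the $m$-independence from Lemma~\ref{lem:duality} to glue the pieces $(B_m,W_1)$ together. Everything else is a faithful transcription of the classical Wasserstein theory on $\Xb$, with the cemetery atom at $\partial$ carrying exactly the defect of mass and accounting for the term $|\mu(X)-\nu(X)|$ in the dual representation.
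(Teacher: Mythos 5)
Your proposal is correct and follows essentially the same route as the paper: both reduce every claim to the classical Wasserstein space on the enlarged Polish space $\Xb$ via the embedding $\mu\mapsto\mub_m$, use the mass bound $|\mu(X)-\nu(X)|\le W_1(\mu,\nu)$ from Lemma~\ref{lem:duality} to confine Cauchy (or convergent) sequences to a single level $m$, and then invoke the standard completeness, separability and convergence-characterization results for $W_{1,m}$ on $\Mc_{1,m}(\Xb)$. Your write-up is in fact somewhat more detailed than the paper's (notably the explicit translation of test functions between $X$ and $\Xb$ using that $\partial$ is isolated), but there is no substantive difference in method.
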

	
	\begin{proof}
	$\mathrm{(i)}$ The fact that $W_1$ is a metric on $\Mc_1(X)$ follows easily from the fact that $W_{1,m}$ is a metric on $\Mc_{1,m}(\Xb)$. Let us prove the triangular inequality for the sake of completeness. Let $\mu^1,\mu^2,\mu^3\in\Mc_1(X)$ and $m\geq\mu^i(X)$ for $i=1,2,3$. It holds
	\begin{equation*}
	 W_1(\mu^1,\mu^3) = W_{1,m}(\mub^1_m,\mub^3_m) \leq W_{1,m}(\mub^1_m,\mub^2_m) + W_{1,m}(\mub^2_m,\mub^3_m) = W_1(\mu^1,\mu^2) + W_1(\mu^2,\mu^3).
	\end{equation*} 
	
	\noindent $\mathrm{(ii)}$ Let us show next that $\Mc_1(X)$ is separable and complete under $W_1$. Recall that $\Mc_{1,m}(\Xb)$ is separable and complete for the classical Wasserstein distance. In particular, $\Mc_1(X)\subset \cup_{m \in\N} \Mc_{1, m}(\Xb)$ is separable as a subset of a separable space. To verify the completeness, we consider a Cauchy sequence $(\mu^k)_{k \in\N}$ in $\Mc_1(X)$. By Lemma~\ref{lem:duality}, $|\mu(X) - \nu(X)|\leq W_1(\mu, \nu)$ and thus $(\mu^k(X))_{k \in\N}$ is a Cauchy sequence in $\R_+$. In particular, it is bounded and so
	 $(\mu^ k)_{k \in \N}$ can be identified to a sequence $(\mub^k_m)_{k \in\N}$ in $\Mc_{1,m}(\Xb)$ for some $m$ large enough. Since $\Mc_{1,m}(\Xb)$ is complete, it converges to a measure $\mub \in \Mc_{1,m}(\Xb)$. 
	 It follows that $(\mu^k)_{k\in\N}$ converges to $\mu := \mub(\cdot \cap X)$ under $W_1$.
	 
	\noindent$\mathrm{(iii)}$ The characterization of the convergence follows easily from the analoguous result for the classical Wasserstein distance, see, \eg, Theorem 6.9 in~\cite{villani09}. Similar to the proof of completeness above, we can use the fact that the sequence $(\mu_n(X))_{n\in\N}$ is bounded to identify the sequence $(\mu_n)_{n\in\N}$ to a sequence of $\Mc_{1,m}(\Xb)$ for $m$ large enough. 
		\qed
	\end{proof}
	
	We conclude this section by providing a compactness criterion in the case $X=\R^d$.
	
	\begin{Lemma} \label{lemm:Wp_compact}		
	  Let $p > 1$. If $\Kc\subset \Mc_1(\R^d)$ satisfies
		\begin{equation*}
		 \sup_{\mu\in\Kc} { \left\{\int_{\R^d} \left(1 + |x|^p\right) \mu(dx) \right\}} < \infty,
		\end{equation*}
		then $\Kc$ is relatively compact.	
	\end{Lemma}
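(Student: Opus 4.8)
The plan is to reduce the statement to the classical compactness criterion in the Wasserstein space of fixed-mass measures over the compactified space $\overline{X} := \R^d \cup \{\partial\}$ (taking $X = \R^d$, $x_0 = 0$, so that $d(x,x_0) = |x|$), and then to read off the two required conditions, tightness and uniform integrability of the first moments, directly from the bound on the $p$-th moments with $p > 1$.

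First I would record that the hypothesis controls the total mass: with $M := \sup_{\mu \in \Kc} \int_{\R^d} (1+|x|^p)\,\mu(dx) < \infty$, we have $\mu(\R^d) \le M$ for every $\mu \in \Kc$. Fixing any $m \ge M$, the map $\mu \mapsto \overline{\mu}_m$ sends $\Kc$ into $\Mc_{1,m}(\overline{X})$ and, by the very definition \eqref{eq:Wp}, satisfies $W_1(\mu,\nu) = W_{1,m}(\overline{\mu}_m, \overline{\nu}_m)$ for all $\mu,\nu \in \Kc$. Moreover, any limit point $\overline{\nu}$ of $\{\overline{\mu}_m : \mu \in \Kc\}$ in $\Mc_{1,m}(\overline{X})$ is of the form $\overline{\nu}_m$ for some $\nu \in \Mc_1(\R^d)$: indeed, writing $\overline{\nu} = \overline{\nu}|_{\R^d} + \overline{\nu}(\{\partial\})\,\delta_\partial$ and setting $\nu := \overline{\nu}|_{\R^d}$, one has $\overline{\nu}(\{\partial\}) = m - \nu(\R^d)$. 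Hence it suffices to prove that $\{\overline{\mu}_m : \mu \in \Kc\}$ is relatively compact in the classical Wasserstein-$1$ space $\Mc_{1,m}(\overline{X})$.

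Next I would verify the two standard conditions. For tightness, Markov's inequality gives, for $R > 0$,
\[
  \sup_{\mu \in \Kc} \overline{\mu}_m\big(\{d(\cdot,x_0) > R\}\big)
  = \sup_{\mu \in \Kc} \mu\big(\{|x| > R\}\big)
  \le \frac{M}{R^p} \xrightarrow[R \to \infty]{} 0,
\]
so the family is tight, hence weakly relatively compact by Prokhorov's theorem. For the uniform integrability of the first moments I would use $p > 1$: since $|x| \le R^{1-p}|x|^p$ whenever $|x| \ge R$, we get, for $R > 1$,
\[
  \sup_{\mu \in \Kc} \int_{\{d(\cdot,x_0) > R\}} d(x,x_0)\,\overline{\mu}_m(dx)
  = \sup_{\mu \in \Kc} \int_{\{|x| > R\}} |x|\,\mu(dx)
  \le M\,R^{1-p} \xrightarrow[R \to \infty]{} 0,
\]
the cemetery point contributing nothing as $d(\partial,x_0) = 1 < R$.

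Finally I would conclude. Tightness together with uniform integrability of the first moments is precisely the criterion for relative compactness in the Wasserstein-$1$ distance: from any sequence in $\Kc$, Prokhorov yields a weakly convergent subsequence $\overline{\mu}^{(n)}_m \rightharpoonup \overline{\nu}$, and the uniform integrability upgrades this to $W_{1,m}$-convergence by the characterisation recalled in Lemma~\ref{lemm:Wp_App} (equivalently Villani~\cite[Theorem~6.9]{villani09}). Writing $\overline{\nu} = \overline{\nu}_m$ with $\nu \in \Mc_1(\R^d)$ as above then gives $W_1(\mu^{(n)}, \nu) = W_{1,m}(\overline{\mu}^{(n)}_m, \overline{\nu}_m) \to 0$, so $\Kc$ is relatively compact. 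The one step that deserves care is this reduction to fixed mass: one must confirm that the embedding $\mu \mapsto \overline{\mu}_m$ is $W_1$-isometric onto its image and that every $\Mc_{1,m}(\overline{X})$-limit projects back to a genuine element of $\Mc_1(\R^d)$; the two moment estimates above are then entirely elementary.
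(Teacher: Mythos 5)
Your proof is correct and follows the same route as the paper: reduce to the fixed-mass space $\Mc_{1,m}(\Xb)$ for $m$ large enough and then invoke the classical relative-compactness criterion for the Wasserstein-$1$ distance. The only difference is that the paper simply cites this classical result (Cardaliaguet, Lemma~5.7), whereas you prove it directly via Prokhorov's theorem and the uniform integrability of first moments supplied by $p>1$; both the isometry of the embedding $\mu\mapsto\bar\mu_m$ and the identification of limit points are handled correctly.
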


	\begin{proof}
 	 The set $\Kc$ can be identified to a subset of $\Mc_{1,m}(\Xb)$ for $m$ large enough. The conclusion then follows from the analogous result for the classical Wasserstein distance, see, \eg, Cardaliaguet \cite[Lemma 5.7]{cardaliaguet2010}.	 \qed
	\end{proof}

\end{appendix}

\bibliographystyle{abbrv}
\bibliography{bibliography}

\end{document}